\newtheorem{theorem}{Theorem}[section]
\newtheorem{lemma}[theorem]{Lemma}
\newtheorem{prop}[theorem]{Proposition}
\newtheorem{cor}[theorem]{Corollary}
\newtheorem{que}[theorem]{Question}
\newtheorem{conj}[theorem]{Conjecture}
\theoremstyle{definition}
\newtheorem{dfn}[theorem]{Definition}
\newtheorem{rk}[theorem]{Remark}
\newtheorem{ex}[theorem]{Example}
\def\<{\langle}
\def\>{\rangle}
\def\M{{\mathcal M}}
\def\beqa{\begin{eqnarray*}}
\def\eeqa{\end{eqnarray*}}
\newcommand{\Z}{\mathbb{Z}}
\newcommand{\R}{\mathbb{R}}
\newcommand{\N}{\mathbb{N}}
\newcommand{\C}{\mathbb{C}}
\newcommand{\nc}{\newcommand}
\nc{\dmo}{\DeclareMathOperator}
\dmo{\Teich}{Teich} 
\dmo{\spec}{spec}
\dmo\Mod{Mod}
\def\Tr{\rm Tr\ } 
\newcommand{\de}{\mathrm{d}}
\newcommand{\De}{\mathrm{D}}
\renewcommand{\to}{\rightarrow}
\newcommand{\To}{\longrightarrow}
\newcommand{\inclusion}{\hookrightarrow}
\newcommand{\p}{\partial}
\newcommand{\Of}{\Omega_{\phi}}
\newcommand{\ophi}{\omega_{\phi}}
\DeclareMathOperator{\id}{id}
\DeclareMathOperator{\im}{im}
\DeclareMathOperator{\ind}{index}
\DeclareMathOperator{\vol}{vol}
\DeclareMathOperator{\area}{area}
\DeclareMathOperator{\fix}{Fix}
\DeclareMathOperator{\symp}{Symp}
\DeclareMathOperator{\diff}{Diff}
\DeclareMathOperator{\sign}{sign}
\DeclareMathOperator{\flux}{Flux}
\DeclareMathOperator{\grow}{Growth}
\def\be{\begin{enumerate}}
\def\ee{\end{enumerate}}
\def\ii{\item}
\def\bi{\begin{itemize}}
\def\ei{\end{itemize}}
\def\Im{\mathop{\rm Im}\nolimits}
\def\tr{\mathop{\rm tr}\nolimits}
\def\Tr{\operatorname{Tr}}
\def\Ind{\operatorname{Ind}}
\def\ind{\operatorname{ind}}
\newcommand{\HF}{HF_*}
\newcommand{\CF}{CF_*}
\def\Fix{\operatorname{Fix}}
\def\RR{\mathbb{R}}
\def\N{{\mathbb N}}
\begin{document}

\title[The growth rate of  Floer homology and symplectic zeta function]
{The growth rate of Floer homology and symplectic zeta function}

\author[Alexander Fel'shtyn]{Alexander Fel'shtyn}
\address{Instytut Matematyki, Uniwersytet Szczecinski,
ul. Wielkopolska 15, 70-451 Szczecin, Poland  and Institute for Advanced Study, Einstein Drive, Princeton, NJ 08540 USA}
\email{felshtyn@gmail.com, felshtyn@ias.edu, fels@wmf.univ.szczecin.pl}

\begin{abstract}

The main theme of this paper is to study  for a  symplectomorphism  of a compact  surface,
the asymptotic invariant which is defined to be the growth rate of
the sequence  of the total dimensions of symplectic Floer homologies
of the iterates of the symplectomorphism. We prove that the asymptotic invariant coincides  with asymptotic Nielsen number and with asymptotic absolute Lefschetz number. We also show that the asymptotic invariant coincides   with  the largest  dilatation of the pseudo-Anosov components of the symplectomorphism and its   logarithm coincides with  the  topological entropy.  This  implies  that  symplectic zeta function  has a positive  radius of convergence. This also establishes a connection between Floer homology and geometry of 3-manifolds.

\end{abstract}

\maketitle

\tableofcontents

\section{Introduction}

The main theme of this paper is to study  for a  symplectomorphism $\phi: M \rightarrow M$ in given mapping class $g$ of a compact  surface $M$,
a asymptotic invariant $F_{\infty}(g)$, introduced in \cite{f},  which is defined to be the growth rate of
the sequence $\dim\HF(\phi^n)$ of the total dimensions of symplectic Floer homologies
of the iterates of $\phi$. We prove  a  conjecture from \cite{f} which suggests that   the asymptotic invariant coincides  with asymptotic Nielsen number and   with  the largest  dilatation of the pseudo-Anosov components of $g$ and its   logarithm coincides with topological entropy.
This  establishes a connection between Floer homology and geometry of 3-manifolds.
 The asymptotic invariant  also  provides  the  radius of convergence 
of the symplectic zeta function $$F_g(t)= F_\phi(t) = 
 \exp\left(\sum_{n=1}^\infty \frac{\dim\HF(\phi^n)}{n} t^n \right)$$.  We  show that the symplectic zeta function   has a positive  radius of convergence
which admits exact algebraic estimation via Reidemeister trace formula.

Our main results are  the  following  
\begin{theorem}\label{th:pa1}  If $\phi$ is  any  symplectomorphism with nondegenerate fixed points in
given pseudo-Anosov mapping class $ g$  with dilatation  $\lambda >1$ of surfase $M$
of genus $\geq 2$. Then
 $$ F^{\infty}(g):=\grow(\dim\HF(\phi^n))=\lambda=\exp(h(\psi))= L^{\infty}(\psi)=N^{\infty}(\psi) $$
where $\psi$ is a canonical singular  pseudo-Anosov representative of $g$,  $h(\psi)$ is the topological entropy and $L^{\infty}(\psi)$ and $N^{\infty}(\psi)$ are asymptotic(absolute) Lefshetz number
and asymptotic Nielsen number.
\end{theorem}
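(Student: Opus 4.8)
The plan is to establish the chain of equalities by proving each link separately, moving from the dynamically defined quantities on the right to the Floer-theoretic quantity on the left. First I would recall the classical Thurston--Nielsen theory for a pseudo-Anosov map $\psi$: the topological entropy satisfies $h(\psi)=\log\lambda$ by the work of Thurston and Fathi--Shub, which immediately gives $\lambda=\exp(h(\psi))$. Next, for the asymptotic Nielsen number, I would use that every essential fixed point class of $\psi^n$ has index with a definite sign (for an interior singularity-free pseudo-Anosov representative the local indices are all negative, or one works with the canonical singular representative), so that $N(\psi^n)$ equals the absolute Lefschetz number $\sum_p |\mathrm{ind}(\psi^n,p)|$ up to bounded error, and both grow like the number of fixed points of $\psi^n$, which is asymptotically $\lambda^n$ by the Markov partition / train track count. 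This yields $N^\infty(\psi)=L^\infty(\psi)=\lambda$.

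The heart of the argument is the identification $\grow(\dim\HF_*(\phi^n))=\lambda$. Here I would invoke the fundamental result (due to Dostoglou--Salamon and, in the surface case, worked out so that symplectic Floer homology of a surface symplectomorphism agrees with a suitable Nielsen-type fixed point Floer homology — see Gautschi, and Cotton-Clay) that for $\phi$ in a pseudo-Anosov class the symplectic Floer homology decomposes over Nielsen classes, and each essential Nielsen class of $\phi^n$ contributes a nonzero, in fact one-dimensional, summand because the fixed points are nondegenerate and the class is ``unremovable.'' Consequently $\dim\HF_*(\phi^n)$ is squeezed between $N(\psi^n)$ (a lower bound, since essential classes survive) and the total number of nondegenerate fixed points of $\phi^n$ in the class $g^n$ (an upper bound, since $\dim CF_* \geq \dim HF_*$). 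Both bounds grow at rate $\lambda$: the lower one by the previous paragraph, and the upper one because any nondegenerate representative has fixed point count asymptotic to that of $\psi$ (the inessential classes contribute a subexponential or comparably-bounded amount, using that $\phi$ is isotopic to $\psi$ and the Nielsen numbers agree). Taking $n$-th roots and letting $n\to\infty$ collapses the inequalities to the single value $\lambda$.

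The main obstacle I anticipate is the upper bound on $\dim CF_*(\phi^n)$, i.e. controlling the number of fixed points contributed by \emph{inessential} Nielsen classes of an arbitrary nondegenerate $\phi$ in the class $g^n$: a priori a badly chosen $\phi$ could have many inessential fixed points, and while these cancel in homology one must still check they do not spoil the exponential \emph{growth rate}. The resolution is that the asymptotic invariant $\grow$ is insensitive to subexponential error and, more importantly, that $\dim HF_*$ itself (not the chain group) is what appears in the statement, so the clean argument runs entirely through the homology: $N(\psi^n)\le \dim HF_*(\phi^n)$ from below, and from above $\dim HF_*(\phi^n)\le \dim HF_*(\psi_{\mathrm{can}}^n)$ is \emph{false} in general, so instead one bounds $\dim HF_*(\phi^n)$ by the number of \emph{essential} fixed points plus a correction, invoking the invariance of Floer homology under isotopy to reduce to the canonical representative whose Floer homology is computed explicitly by the Markov partition. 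Making the reduction to the canonical singular model rigorous — and handling the singular (non-smooth, non-symplectic-in-the-naive-sense) points via an appropriate smoothing or via Cotton-Clay's explicit models — is the technical crux, and I would structure the proof so that all the analytic input is quoted from those references, leaving only the combinatorial growth estimate $\#\fix(\psi^n)\sim\lambda^n$ to be carried out here.
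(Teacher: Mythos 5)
Your proposal follows essentially the same route as the paper -- identify $\dim\HF_*$ for a pseudo-Anosov class with Nielsen-theoretic fixed point data via Cotton-Clay's computation, then invoke the classical Nielsen--Thurston dynamics to get the growth rate $\lambda$ -- but it is organized around a squeeze inequality where the paper instead uses an exact identity, and this makes your version fuzzier than it needs to be. The paper proves (Lemma~\ref{lem:pa}) that for a pseudo-Anosov class $\dim\HF_*(\phi^n) = \|L_\pi(\psi^n)\|$ exactly for every $n$: since each essential fixed point class of a pseudo-Anosov map is a single point, the Reidemeister trace $L_\pi(\psi^n)=\sum_{x\in\Fix(\psi^n)}\ind(x)\cdot cd_\pi(x,\psi^n)$ has norm $\sum_x|\Ind(x)|$, and Cotton-Clay's Theorem~\ref{th:pa} says this sum is precisely $\dim\HF_*$. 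Theorem~\ref{th:I} (Ivanov/Jiang/FLP) then gives the whole chain $\grow\|L_\pi(\psi^n)\| = L^\infty(\psi)=N^\infty(\psi)=\exp h(\psi)=\lambda$ in one step, with no need for the upper-bound analysis you anticipate as the ``main obstacle.''

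Two specific points of friction in your write-up. First, your claim that ``each essential Nielsen class of $\phi^n$ contributes a nonzero, in fact one-dimensional, summand'' is not correct as stated: at a $p$-pronged fixed singularity whose rotation number for $\psi^n$ is $\equiv 0 \pmod p$, the symplectic smoothing produces $p-1$ fixed points all of index $-1$ in a single Nielsen class, so the local contribution to $\HF_*$ is $(p-1)$-dimensional, not $1$-dimensional; this does not disturb the growth rate (the singularity set is finite and fixed) but the clean statement is the $\|L_\pi\|$-identity, not a one-per-class count. Second, your concern about inessential fixed points of a ``badly chosen'' $\phi^n$ blowing up $\dim CF_*$ is already dissolved by invariance of $\HF_*$ within a mapping class -- you do eventually say this, but the exact equality $\dim\HF_*(\phi^n) = \|L_\pi(\psi^n)\|$ makes the detour through chain groups unnecessary and removes the need to control any ``correction term.'' In short: correct strategy, same inputs (Cotton-Clay plus classical pseudo-Anosov dynamics), but you rediscover the hard way what the paper encapsulates in a single lemma.
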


\begin{theorem}\label{th:main} Let $\bar{\phi}$ be a perturbed standard form map $\phi$ in a reducible
mapping class $g$ of compact surface  of genus $\geq 2$ and  $\lambda$ is the largest  dilatation  of the  pseudo-Anosov components( $\lambda=1$ if there is no pseudo-Anosov components). Then
$$ F^{\infty}(g):=\grow(\dim\HF((\bar{\phi})^n))=\lambda=\exp(h(\psi))= L^{\infty}(\psi)=N^{\infty}(\psi)$$
 where $\psi$ is canonical representative of mapping class $g$.
\end{theorem}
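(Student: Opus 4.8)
\medskip\noindent\textbf{Proof strategy.}
The plan is to reduce the reducible case to the pseudo-Anosov case already settled in Theorem \ref{th:pa1}, by a piecewise analysis along the Thurston decomposition of $g$, and then to match the four quantities in the statement by invoking the classical asymptotic identities of Nielsen--Lefschetz--entropy theory on the canonical representative. First I would recall the structure of the canonical representative $\psi$: cutting $M$ along the Thurston reducing system $\Gamma$, the complement is a disjoint union of $\psi$-orbits of subsurfaces on each of which the first-return map is either periodic or pseudo-Anosov, and on an annular neighborhood of each curve of $\Gamma$ the map $\psi$ is a (possibly trivial) power of a Dehn twist. Following Gautschi's construction of perturbed standard form maps (and its symplectic refinement used already in the monotone setting of this paper), $\bar\phi$ is $C^{0}$-close to $\psi$, coincides with it outside small neighborhoods of the singular set and of $\Gamma$, has only nondegenerate fixed points, and its symplectic Floer homology is computed from the associated fixed-point and Floer-strip data. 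The key structural input is that $\HF(\bar\phi^{\,n})$ splits (as a filtered complex, with no Floer differential running between distinct pieces, the relevant strips being thin immersed bigons confined to a single piece) into contributions indexed by the pseudo-Anosov pieces, the periodic pieces, and the twist annuli.

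Next I would estimate each contribution to $\dim\HF(\bar\phi^{\,n})$. On a periodic piece the return map is finite order, so $\fix(\bar\phi^{\,n})$ there is bounded independently of $n$, contributing $O(1)$. On a twist annulus the fixed points of $\bar\phi^{\,n}$ are organized along the core, and their number grows at most polynomially in $n$; moreover these classes are either inessential or pair up under the Floer differential, so after passing to homology they do not contribute to the exponential growth. On a pseudo-Anosov piece with dilatation $\lambda_i$ the argument of Theorem \ref{th:pa1} applies verbatim: the essential fixed points of the $n$-th iterate are the periodic points of the pA map, their number is asymptotic to $\lambda_i^{\,n}$, they all contribute with the same sign so there is no cancellation in $\HF$, and the rank coming from this piece is $\sim\lambda_i^{\,n}$. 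Summing, $\dim\HF(\bar\phi^{\,n}) = \sum_i \lambda_i^{\,n} + O(\mathrm{poly}(n))$, hence $\grow(\dim\HF(\bar\phi^{\,n})) = \max_i \lambda_i = \lambda$.

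Finally I would match the remaining three quantities to $\lambda$. The identity $\exp(h(\psi)) = \lambda$ is the Fathi--Shub computation of the topological entropy of a Thurston canonical map: entropy is the maximum of the entropies of the pieces, a pA piece with dilatation $\lambda_i$ has entropy $\log\lambda_i$, and periodic and twist pieces have zero entropy. For $N^{\infty}(\psi) = \lambda$ I would use that for large $n$, $N(\psi^{n})$ is the number of essential fixed point classes, which again localizes in the pA pieces and grows like $\sum_i \lambda_i^{\,n}$; combined with Ivanov's inequality $N^{\infty} \le \exp(h)$ this gives equality. The same localization, together with the fact that in a pA piece the indices of the essential classes all carry the same sign, shows that $|L(\psi^{n})|$, and hence $L^{\infty}(\psi)$, grows like $\sum_i \lambda_i^{\,n}$, so $L^{\infty}(\psi) = \lambda$. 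Chaining these equalities with the Floer computation of the previous paragraph yields the theorem.

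The main obstacle I anticipate is the twist-annulus analysis: one must show both that the $n$-fold Dehn-twist regions contribute only subexponentially to $\dim\HF(\bar\phi^{\,n})$ \emph{after} passing to homology (i.e.\ that their extra generators cancel in pairs or are killed), and that no Floer differential connects a pseudo-Anosov piece to an adjacent twist annulus in a way that could kill the exponentially many pA generators. Making this rigorous requires the precise local model for $\bar\phi$ near $\Gamma$ and careful sign and Conley--Zehnder index bookkeeping for iterates --- essentially the heart of Gautschi's computation, extended from $\phi$ to all $\bar\phi^{\,n}$; the remaining steps are the assembly of standard asymptotic identities.
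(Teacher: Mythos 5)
Your strategy — decompose along the Thurston reducing system, show the pseudo-Anosov pieces dominate, and match the four asymptotic quantities via classical entropy/Nielsen theory — is the same strategy the paper uses. But there is a genuine gap exactly where you flag your "main obstacle": your proposal tries to re-derive the splitting of $\HF(\bar\phi^{\,n})$ along the reducing system from scratch, whereas the paper's proof simply invokes Cotton-Clay's explicit formula for $\dim\HF$ of a reducible mapping class (Theorem~\ref{th:red}), which \emph{is} that splitting with the twist-annulus and interaction bookkeeping already done, applies it to $\bar\phi^{\,n}$ for every $n$, observes that every summand except $\dim HF_*((\bar\phi)^n|M_2)$ grows at most linearly, and then feeds the pA summand into Theorem~\ref{th:pa1} and Jiang's Theorem~\ref{th:j}. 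Without that input, your argument does not close; re-proving Cotton-Clay's theorem for all iterates would be a much harder task than the theorem at hand, and the paper deliberately does not attempt it.

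Two of your intermediate claims are also imprecise. First, it is \emph{not} true that "no Floer differential run[s] between distinct pieces": Cotton-Clay's theorem is carefully phrased to allow holomorphic strips crossing from a pseudo-Anosov component into an adjacent identity (no-twist) component — that is exactly why his formula contains the $M_{b,p}$ and $M_{c,q}$ correction terms. Those terms still grow only linearly in $n$, so the growth-rate conclusion survives, but the structural assertion as you stated it is false. Second, in a pseudo-Anosov piece the fixed points do \emph{not} "all contribute with the same sign"; the reason there is no cancellation is that each fixed point of the singular canonical representative is its own Nielsen class (and the smoothed fixed points born from a single singularity share a Nielsen class and a common sign), so $\dim\HF = \sum_x|\ind(x)|$. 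This is Lemma~\ref{lem:pa} in the paper, not a same-sign statement. Finally, a minor misattribution: the relevant perturbed-standard-form calculus in the presence of pA pieces is Cotton-Clay's, not Gautschi's (Gautschi handled only the algebraically finite case); and the identity $\exp(h(\psi)) = N^\infty(\psi) = L^\infty(\psi) = \lambda$ for Thurston canonical representatives is quoted by the paper directly from Jiang (Theorem~\ref{th:j}) rather than reassembled from Fathi--Shub and Ivanov as you propose, though both routes are valid.
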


\begin{rk}

The genus one case follows from \cite{f} and Pozniak's thesis\cite{p}.

\end{rk}
\begin{theorem}
 Let $\bar{\phi}$ be a perturbed standard form map $\phi$ in a reducible
mapping class $g$ of compact surface  of genus $\geq 2$ and  $\lambda$ is the largest  dilatation  of the  pseudo-Anosov components( $\lambda=1$ if there is no pseudo-Anosov components). Then
 the symplectic  zeta function $F_g(t)= F_{\bar{\phi}(t)}$ has positive radius of convergence  $R=\frac{1}{\lambda}$, where  $\lambda$ is the largest  dilatation  of the  pseudo-Anosov components( $\lambda=1$ if there is no pseudo-Anosov components).
\end{theorem}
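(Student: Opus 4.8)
The plan is to derive this corollary directly from Theorem~\ref{th:main}, which already identifies the growth rate $\grow(\dim\HF((\bar\phi)^n))$ with $\lambda$, the largest dilatation of the pseudo-Anosov components. Recall that the symplectic zeta function is the power series
$$F_g(t)=\exp\left(\sum_{n=1}^\infty \frac{\dim\HF((\bar\phi)^n)}{n}\,t^n\right),$$
so its radius of convergence $R$ is governed by the radius of convergence of the exponent series $\sum_n \frac{a_n}{n}t^n$ with $a_n:=\dim\HF((\bar\phi)^n)\ge 0$. First I would recall the precise meaning of the growth rate: by definition $\grow(a_n)=\limsup_{n\to\infty} a_n^{1/n}=\lambda$ (using the convention that yields $1$ when the sequence is bounded, matching the $\lambda=1$ case with no pseudo-Anosov pieces). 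The key elementary fact is that dividing by $n$ does not change the exponential growth rate, since $n^{1/n}\to 1$; hence $\limsup_n (a_n/n)^{1/n}=\limsup_n a_n^{1/n}=\lambda$.

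The main step is then a direct application of the Cauchy--Hadamard formula. For the exponent series $g_\phi(t):=\sum_{n\ge 1}\frac{a_n}{n}t^n$, Cauchy--Hadamard gives radius of convergence
$$\rho=\Big(\limsup_{n\to\infty}\big(a_n/n\big)^{1/n}\Big)^{-1}=\frac{1}{\lambda}.$$
Next I would argue that $F_g(t)=\exp(g_\phi(t))$ has the same radius of convergence $R=\rho=1/\lambda$. On the disk $|t|<\rho$ the series $g_\phi$ converges to a holomorphic function, so $\exp\circ\, g_\phi$ is holomorphic there, giving $R\ge \rho$. For the reverse inequality $R\le\rho$, one uses that $\log F_g(t)=g_\phi(t)$ as formal power series with $F_g(0)=1$, so $F_g$ cannot have radius of convergence strictly larger than that of its logarithm; more concretely, since $a_n\ge 0$ all coefficients are nonnegative and one can compare coefficients of $F_g$ and $g_\phi$ termwise, or invoke Pringsheim's theorem to locate a singularity of $F_g$ at $t=\rho$ which forces $R\le\rho$. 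Either route shows $R=1/\lambda$.

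Finally I would add the remark, already flagged in the abstract, that this radius is computable "via the Reidemeister trace formula": the numbers $a_n=\dim\HF((\bar\phi)^n)$ are, by the identification with Nielsen-type data used in Theorems~\ref{th:pa1} and~\ref{th:main}, expressible through fixed-point and Reidemeister trace invariants of the canonical representative $\psi$, and $\lambda$ is the spectral radius of the action of $\psi$ on the relevant (train-track) homology, hence an algebraic number. The only genuine subtlety is bookkeeping around the degenerate case: when there are no pseudo-Anosov components, $\lambda=1$, the sequence $a_n$ is bounded (indeed the Floer homologies are eventually periodic up to the contributions of the periodic and reducing pieces), the exponent series has radius of convergence $1$, and $F_g(t)$ converges on $|t|<1$; one should check that no cancellation makes $R$ strictly larger, which again follows from nonnegativity of the $a_n$. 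I expect this degenerate-case check and the precise convention for $\grow$ to be the only points needing care; the rest is immediate from Theorem~\ref{th:main} plus Cauchy--Hadamard.
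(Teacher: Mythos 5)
Your approach is essentially identical to the paper's: apply Cauchy--Hadamard to the exponent series $\sum_{n\ge 1}\frac{\dim\HF((\bar\phi)^n)}{n}t^n$, observe that dividing by $n$ does not affect the exponential growth rate, and invoke Theorem~\ref{th:main} to identify that growth rate with $\lambda$. Your additional justification (via nonnegativity of the coefficients and Pringsheim) that $F_g(t)=\exp(g_\phi(t))$ has the same radius of convergence as its logarithm is a step the paper leaves implicit, and it is a welcome clarification rather than a different route.
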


Although the exact evaluation of the asymptotic invariant would be desirable, in general,  its estimation is a 
more realistic goal and as we shall show, one that is sufficient for some applications.

We suggested in \cite{f} that the asymptotic invariant potentially  may be important for the applications.
Recent paper of Ivan Smith \cite{Sm} gives    application of the asymptotic invariant to the 
important question of faithfulness of a representation of extended mapping class group  via considerations motivated by Homological Mirror Symmetry.

{\bf Acknowledgments.} I would like to thank Andrew Cotton-Clay, Yasha Eliashberg,  Sam Lewallen,  Ivan Smith and Andras Stipsicz
for helpful  discussions.
I am very grateful to the Stanford University, Mathematical Science  Research Institute, Berkeley,  Max-Planck-Institute for
Mathematics, Bonn, Institute des Hautes Etudes Scientifiques, Bures-sur-Yvette and Institute for Advanced Studies, Princeton for their kind hospitality and  support during the preparation of this paper.  

\section{Preliminaries}

\subsection{ Symplectic Floer homology}

\subsubsection{Review of monotonicity and weak monotonicity}

In this section we discuss the notion of monotonicity and weak monotonicity  as defined in \cite{S,G,c1}.
Monotonicity plays important role for  Floer homology in two
dimensions.
Throughout this article, $M$ denotes a compact  connected and oriented
2-manifold of genus $\geq2$.   Pick an everywhere positive two-form $\omega$
on $M$.

Let $\phi\in\symp(M,\omega)$, the group of  symplectic automorphisms
 of the two-dimensional symplectic
manifold $(M,\omega)$ (when $M$ has boundary we consider the group of orientation-preserving diffeomorphisms of $M$ with
no fixed points on the boundary).
The mapping torus of $\phi$,  $T_\phi = \R\times M/(t+1,x)\sim(t,\phi(x)),$
is a 3-manifold fibered over $S^1=\R/\Z$.
There are two natural second
cohomology classes on $T_\phi$, denoted by $[\ophi]$ and $c_\phi$. The first one is
represented by the closed two-form $\ophi$ which is induced from the pullback
of $\omega$ to $\R\times M$. The second is the Euler class of the
vector bundle
$
V_\phi =
\R\times T M/(t+1,\xi_x)\sim(t,\de\phi_x\xi_x),
$
which is of rank 2 and inherits an orientation from $TM$.

Symplectomorphism $\phi\in\symp(M,\omega)$ is called {\bf monotone}, if
$
[\omega_\phi] = (\area_\omega(M)/\chi(M))\cdot c_\phi
$
in $H^2(T_\phi;\R)$; throughout this article
$\symp^m(M,\omega)$ denotes the set of
monotone symplectomorphisms.

Now $H^2(T_\phi;\R)$ fits into the following short exact sequence \cite{S,G}
\begin{equation}\label{eq:cohomology}
0 \To  \frac{H^1(M;\R)}{\im(\id-\phi^*)}
\stackrel{d}{\To} H^2(T_\phi;\R)
\stackrel{r^*}{\To} H^2(M;\R),
\To 0.
\end{equation}
where the map $r^*$  is restriction to the fiber.
The map $d$ is defined as follows.
Let $\rho:I\to\R$ be a smooth function which vanishes near $0$ and $1$ and
satisfies $\int_0^1\!\rho\,\de t=1$.
If $\theta$ is a closed 1-form on $M$, then
$\rho\cdot\theta\wedge\de t$ defines a closed 2-form on $T_\phi$; indeed
$
d[\theta] = [\rho\cdot\theta\wedge\de t].
$
The map $r:M\inclusion T_\phi$ assigns to each $x\in M$ the
equivalence class of $(1/2,x)$.
Note, that $r^*\ophi=\omega$ and $r^*c_\phi$ is the Euler class of
$TM$.
Hence, by \eqref{eq:cohomology}, there exists a unique class
$m(\phi)\in H^1(M;\R)/\im(\id-\phi^*)$ satisfying
$
d\,m(\phi) = [\ophi]-(\area_\omega(M)/\chi(M))\cdot c_\phi,
$
where $\chi(M)$ denotes the Euler characteristic of $M$.
Therefore, $\phi$ is monotone if and only if $m(\phi)=0$.
 
Because $c_\phi$ controls the index, or expected dimension, of moduli
spaces of holomorphic curves under change of homology class and $\omega_\phi$
controls their energy under change of homology class, the monotonicity condition
ensures that the energy is constant on the index one components of the moduli space,
which implies compactness and, as a  corollary, finite count  in a  differential 
of the Floer complex.

We recall the fundamental properties of $\symp^m(M,\omega)$ from \cite{S,G}.
Let  $\diff^+(M)$ denotes  the group of orientation preserving
diffeomorphisms of $M$.
\smallskip\\
(Identity) $\id_M \in \symp^m(M,\omega)$.
\smallskip\\
(Naturality)\label{page:natur}
If $\phi\in\symp^m(M,\omega),\psi\in\diff^+(M)$, then
$\psi^{-1}\phi\psi\in\symp^m(M,\psi^*\omega)$.
\smallskip\\
(Isotopy)
Let $(\psi_t)_{t\in I}$ be an isotopy in $\symp(M,\omega)$, i.e. a smooth
path with $\psi_0=\id$.
Then
$
m(\phi\circ\psi_1)=m(\phi)+[\flux(\psi_t)_{t\in I}]
$
in $H^1(M;\R)/\im(\id-\phi^*)$; see \cite[Lemma 6]{S}.
For the definition of the flux homomorphism see \cite{MS}.
\smallskip\\
(Inclusion)
The inclusion $\symp^m(M,\omega)\inclusion\diff^+(M)$ is a homotopy
equivalence. In particular $\symp^m(M,\omega)$ is path connected.
\smallskip\\
(Floer homology)
To every $\phi\in\symp^m(M,\omega)$ symplectic Floer homology
theory assigns a $\Z_2$-graded vector space $\HF(\phi)$ over $\Z_2$, with an
additional multiplicative structure, called the quantum cap product,
$
H^*(M;\Z_2)\otimes\HF(\phi)\To\HF(\phi).
$
For $\phi=\id_M$ the symplectic Floer homology $\HF(\id_M)$ are  canonically isomorphic to ordinary homology  $H_*(M;\Z_2)$ and quantum cap product agrees with the ordinary cap product.
Each $\psi\in\diff^+(M)$ induces an isomorphism
$\HF(\phi)\cong\HF(\psi^{-1}\phi\psi)$ of $H^*(M;\Z_2)$-modules.
\smallskip\\
(Invariance)
If $\phi,\phi'\in\symp^m(M,\omega)$ are isotopic, then
$\HF(\phi)$ and $\HF(\phi')$ are naturally isomorphic as
$H^*(M;\Z_2)$-modules.
This is proven in \cite[Page 7]{S}. Note that every Hamiltonian perturbation
of $\phi$ (see \cite{ds}) is also in $\symp^m(M,\omega)$.
\smallskip\\
Now let $g$ be a mapping class of $M$, i.e. an isotopy class of $\diff^+(M)$.
Pick an area form $\omega$ and a
representative $\phi\in\symp^m(M,\omega)$ of $g$.
 $\HF(\phi)$ is an invariant as $\phi$ is deformed through monotone symplectomorphisms.
These imply that we have a symplectic Floer homology invariant $\HF(g)$ canonically assigned to each
mapping class  $g$ given by $\HF(\phi)$ for any monotone symplectomorphism $\phi$. Note that $\HF(g)$ is independent of the choice of an area
form $\omega$ by Moser's isotopy theorem \cite{Mo} and naturality of Floer homology.

We give now,  following A. Cotton-Clay \cite{c1},  a notion of \emph{weak monotonicity}  such that $HF_*(\phi)$ is well-defined for and invariant among weakly monotone simplectomorphisms. Monotonicity implies weak monotonicity, and so $HF_*(g) = HF_*(\phi)$ for any weakly monotone $\phi$ in mapping class $g$. The properties of weak monotone symplectomorphism
of surface  play a crucial role in the  computation of  Floer homology for pseudo-Anosov and  reducible mapping classes(see \cite{c1}).

A symplectomorphism  $\phi: M \rightarrow M$ is \emph{\bf weakly monotone}
 if $[\omega_\phi]$ vanishes on\\
 $\ker(c_\phi|_{T(T_\phi)})$, where $T(T_\phi) \subset H_2(M_\phi;\R)$ is generated by tori $T$ such that $\pi|_T: T \rightarrow S^1$ is a fibration with fiber $S^1$, where the map $\pi: T_\phi \rightarrow S^1$ is the projection.
 Throughout this article
$\symp^{wm}(M,\omega)$ denotes the set of
 weakly monotone symplectomorphisms.

\subsubsection{Floer homology}

Let $\phi\in\symp(M,\omega)$.There are two ways of constructing Floer
homology detecting its fixed points, $\Fix(\phi)$. Firstly, the graph of $\phi$
is a Lagrangian submanifold of $M\times M,(-\omega)\times\omega)$ and its fixed points correspond to the intersection points of graph($\phi$) with the
diagonal $\Delta=\{(x,x)\in M\times M\}$. Thus we have the Floer homology of the Lagrangian intersection $\HF(M\times M,\Delta, graph (\phi))$.
This intersection is transversal if the fixed points of $\phi$ are nondegenerate, i.e. if 1 is not an eigenvalue of $d\phi(x)$, for $x\in\Fix(\phi)$.
The second approach was mentioned by Floer in \cite{Floer1} and presented with
details by Dostoglou and Salamon  in \cite{ds}.We follow here  Seidel's approach \cite{S} which, comparable with \cite {ds}, uses a larger
class of perturbations, but such that the perturbed action form is still
cohomologous to the unperturbed. As a consequence, the usual invariance of
Floer homology under Hamiltonian isotopies is extended to the stronger
property stated above.
Let now    $\phi$ is  monotone or weakly monotone.
Firstly, we give  the definition of $\HF(\phi)$  in the special case
where all the fixed points of $\phi$ are non-degenerate, i.e. for all $y\in\Fix(\phi)$, $\det(\id-\de\phi_y)\ne0$, and then
following Seidels approach  \cite{S} we consider general case when $\phi$ has degenerate fixed points.
 Let $\Of = \{ y \in C^{\infty}(\R,M)\,|\, y(t) = \phi(y(t+1)) \}$ be the twisted free loop space, which is also
the space of sections of $T_\phi \rightarrow S^1$. The action form is the
closed one-form $\alpha_\phi$ on $\Of$ defined by
$$
\alpha_\phi(y) Y = \int_0^1 \omega(dy/dt,Y(t))\,dt.
$$
where $y\in\Of$ and $Y\in T_y\Of$, i.e. $Y(t)\in T_{y(t)}M$ and
$Y(t)=\de\phi_{y(t+1)}Y(t+1)$ for all $t\in\R$.

The tangent bundle of any symplectic manifold admits an almost complex structure $ J:TM\To TM$ which is compatible with $\omega$ in sense that $(v,w)=\omega(v,Jw)$ defines a Riemannian metric.
 Let $J=(J_t)_{t \in \R}$
be a smooth path of $\omega$-compatible almost  complex structures on
$M$ such that $J_{t+1}=\phi^*J_t$.
If $Y,Y'\in T_y\Of$, then
$\int_0^1\omega(Y'(t),J_t Y(t))\de t$ defines a metric
on the loop space $\Of$. So  the critical points of $\alpha_\omega$ are the constant paths in  $\Of$ and  hence the fixed points of $\phi$. The negative gradient lines of $\alpha_\omega$ with respect to the
metric above  are solutions of the partial differential equations
with boundary conditions
\begin{equation}\label{eq:corbit}
\left\{\begin{array}{l}
u(s,t) = \phi(u(s,t+1)), \\
\p_s u + J_t(u)\p_t u = 0, \\
\lim_{s\to\pm\infty}u(s,t) \in \Fix(\phi)
\end{array}\right.
\end{equation}
These are exactly  Gromov's pseudoholomorphic  curves \cite{Gromov}.

For $y^\pm\in\Fix(\phi)$, let $\M(y^-,y^+;J,\phi)$ denote the space of smooth maps $u:\R^2\to M$ which satisfy the  equations \eqref{eq:corbit}.
Now to every $u\in\M(y^-,y^+;J,\phi)$ we  associate a Fredholm operator
$\De_u$ which linearizes (\ref{eq:corbit}) in suitable Sobolev spaces. The
index of this operator is given by the so called Maslov index $\mu(u)$,
which satisfies $\mu(u)=\deg(y^+)-\deg(y^-)\text{ mod }2$, where  $(-1)^{\deg y}=\sign(\det(\id-\de\phi_y))$. We have no bubbling, since for surface
$\pi_2(M)=0$. For a generic
$J$, every $u\in\M(y^-,y^+;J,\phi)$ is regular, meaning that $\De_u$ is onto.
Hence, by the implicit function theorem, $\M_k(y^-,y^+;J,\phi)$ is
a smooth $k$-dimensional manifold and  is  the
subset of those $u\in\M(y^-,y^+;J,\phi)$ with $\mu(u)=k\in\Z$.
Translation of the $s$-variable defines a free $\R$-action on 1-dimensional
manifold $\M_1(y^-,y^+;J,\phi)$ and hence the quotient is a discrete set of points. The energy of a map $u:\R^2\to M$ is given by $
E(u) = \int_{\R}\int_0^1 \omega\big(\p_tu(s,t),J_t\p_tu(s,t)\big)\,\de t\de s$  for all $y\in\Fix(\phi)$.  P.Seidel and A. Cotton-Clay have proved  in \cite{S} and \cite{c1} that if $\phi$ is monotone or weakly monotone, then the energy is constant on each $\M_k(y^-,y^+;J,\phi)$.
Since all fixed points of $\phi$ are nondegenerate the set  $\Fix(\phi)$ is a finite set and the
$\Z_2$-vector space  $
\CF(\phi) := \Z_2^{\#\Fix(\phi)}$
admits a $\Z_2$-grading with $(-1)^{\deg y}=\sign(\det(\id-\de\phi_y))$,
for all $y\in \Fix(\phi)$.
The boundedness of the energy  $E(u)$   for monotone or weakly monotone   $\phi$  implies that   the  0-dimensional  quotients   $\M_1(y_-,y_+,J,\phi)/\R$   are actually finite sets. Denoting by  $n(y_-,y_+)$  the number of
points mod 2 in each of them, one defines a differential  $\partial_{J}:
CF_*(\phi) \rightarrow CF_{* + 1}(\phi)$  by $\partial_{J}y_- =
\sum_{y_+} n(y_-,y_+) {y_+}$.  Due to gluing theorem  this Floer boundary operator satisfies  $\partial_{J} \circ
\partial_{J} = 0$.  For gluing  theorem to hold one needs again the  boundedness of the energy $E(u)$ .  It follows that  $ (\CF(\phi),\partial_{J})$  is a chain complex  and its homology is by definition the Floer homology of  $\phi$   denoted $HF_*(\phi)$. It  is independent of $J$ and is an invariant of $\phi$.

If $\phi$ has degenerate fixed points one needs to perturb equations
\eqref{eq:corbit} in order to define the Floer homology. Equivalently, one
could say that the action form needs to be perturbed.
 The necessary analysis is  given in \cite{S}, it  is essentially  the same as in the slightly different
situations considered in \cite{ds}. But  Seidel's approach also differs from the usual one in \cite{ds}. He uses a larger
class of perturbations, but such that the perturbed action form is still
cohomologous to the unperturbed.

\subsection{Nielsen classes  and Reidemeister trace}

Before discussing the   results of the paper, we briefly describe the few
basic notions of Nielsen fixed point theory which will be used.
We assume  $X$ to be a connected, compact
polyhedron and $f:X\rightarrow X$ to be a continuous map.
Let $p:\tilde{X}\rightarrow X$ be the universal cover of $X$
and $\tilde{f}:\tilde{X}\rightarrow \tilde{X}$ a lifting
of $f$, i.e. $p\circ\tilde{f}=f\circ p$.
Two liftings $\tilde{f}$ and $\tilde{f}^\prime$ are called
{\sl conjugate} if there is a $\gamma\in\Gamma\cong\pi_1(X)$
such that $\tilde{f}^\prime = \gamma\circ\tilde{f}\circ\gamma^{-1}$.
The subset $p(Fix(\tilde{f}))\subset Fix(f)$ is called
{\sl the fixed point class of $f$ determined by the lifting class $[\tilde{f}]$}.Two fixed points $x_0$ and $x_1$ of $f$ belong to the same fixed point class iff  there is a path $c$ from $x_0$ to $x_1$ such that $c \cong f\circ c $ (homotopy relative endpoints). This fact can be considered as an equivalent definition of a non-empty fixed point class.
 Every map $f$  has only finitely many non-empty fixed point classes, each a compact  subset of $X$.
A fixed point class is called {\sl essential} if its index is nonzero.
The number of essential fixed point classes is called the {\sl Nielsen number}
of $f$, denoted by $N(f)$.The Nielsen number is always finite.
$R(f)$ and $N(f)$ are homotopy invariants.
In the category of compact, connected polyhedra, the Nielsen number
of a map is, apart from certain exceptional cases,
 equal to the least number of fixed points
 of maps with the same homotopy type as $f$.

Let $f:X\rightarrow X$ be given, and let a
specific lifting $\tilde{f}:\tilde{X}\rightarrow\tilde{X}$ be chosen
as reference.
Let $\Gamma$ be the group of
covering translations of $\tilde{X}$ over $X$.
Then every lifting of $f$ can be written uniquely
as $\alpha\circ \tilde{f}$, with $\alpha\in\Gamma$.
So elements of $\Gamma$ serve as coordinates of
liftings with respect to the reference $\tilde{f}$.
Now for every $\alpha\in\Gamma$ the composition $\tilde{f}\circ\alpha$
is a lifting of $f$ so there is a unique $\alpha^\prime\in\Gamma$
such that $\alpha^\prime\circ\tilde{f}=\tilde{f}\circ\alpha$.
This correspondence $\alpha\rightarrow\alpha^\prime$ is determined by
the reference $\tilde{f}$, and is obviously a homomorphism.
The endomorphism $\tilde{f}_*:\Gamma\rightarrow\Gamma$ determined
by the lifting $\tilde{f}$ of $f$ is defined by
$
  \tilde{f}_*(\alpha)\circ\tilde{f} = \tilde{f}\circ\alpha.
$
It is well known that $\Gamma\cong\pi_1(X)$.
We shall identify $\pi=\pi_1(X,x_0)$ and $\Gamma$ in the usual  way.

We have seen that $\alpha \in \pi$ can be considered as the coordinate of the
lifting $\alpha \circ \tilde f$. We can  tell the conjugacy of two liftings from their coordinates:
$[\alpha \circ\tilde f]=[\alpha^\prime \circ\tilde f] $  iff there is $\gamma  \in \pi$ such that
$\alpha^\prime=\gamma \alpha \tilde{f}_* (\gamma^{-1})$.

So we have the  Reidemeister bijection:
 Lifting classes of $f$ are in 1-1 correspondence with $\tilde{f}_*$-conjugacy classes in group  $\pi$,
 the lifting class $[\alpha\circ\tilde f]$ corresponds to the $\tilde{f}_*$-cojugacy class of $\alpha$.

   By an abuse of language, we  say that the fixed point class $p( \fix{\alpha\circ\tilde f})$,
which is labeled with the lifting class $[\alpha\circ\tilde f]$,corresponds to the $\tilde{f}_*$-conjugacy class of $\alpha$. Thus the $\tilde{f}_*$-conjugacy classes in $\pi$ serve
as coordinates for the fixed point classes of $f$, once a reference lifting $\tilde f$ is chosen.

\subsubsection{Reidemeister trace}\label{section: Reidemeister}

 The results of this section are well known(see \cite{j},\cite{FelshB,fh2}).We shall use this results later in section  to estimate the radius of convergence of the symplectic  zeta function. 
 The fundamental group  $\pi=\pi_1(X,x_0)$ splits into $\tilde{f}_*$-conjugacy classes.Let $\pi_f$ denote the set of $\tilde{f}_*$-conjugacy classes,and $\Z\pi_f$ denote the Abelian group freely generated by $\pi_f$ .We will use the bracket notation $a\to [a]$ for both projections $\pi\to \pi_f$ and  $\Z\pi\to \Z\pi_f$. 
Let $x$ be a fixed point of $f$.Take a path $c$ from $x_0$ to $x$.The  $\tilde{f}_*$-conjugacy class in $\pi$ of the loop $c\cdot (f\circ c)^{-1}$,which is evidently independent of the choice of $c$, is called the coordinate of $x$.Two fixed points are in the same fixed point class $F$ iff they have the same coordinates.This
$\tilde{f}_*$-conjugacy class is thus called the coordinate of the fixed point class $F$ and denoted $cd_{\pi}(F,f)$ (compare with  description in section 2). 
 The generalized Lefschetz number  or the Reidemeister trace \cite{j} is defined as
\begin{equation} 
L_{\pi}(f):=\sum_{F}\ind (F,f)\cdot cd_{\pi}(F,f)  \in  \Z\pi_f , 
\end{equation} 
the summation being over all essential fixed point classes $F$of $f$.The Nielsen number $N(f)$  
is the number of non-zero terms in $L_{\pi}(f)$,and the indices of the essential fixed point classes  
appear as the coefficients in $L_{\pi}(f)$.This invariant used to be called the Reidemeister trace  
because it can be computed as an alternating sum of traces on the chain level   
as follows \cite{j} . 
Assume that $X$ is a finite cell complex and  $f:X\to X$ is a cellular map. 
A cellular decomposition ${e_j^d}$ of $X$ lifts to a $\pi$-invariant cellular structure
 on the universal covering $\tilde X$.Choose an arbitrary lift  ${\tilde{e}_j^d}$ for each ${e_j^d}$ . 
They constitute a free  $\Z\pi$-basis for the cellular chain complex of $\tilde{X}$. 
The lift $\tilde{f}$ of $f$ is also a cellular map.In every dimension $d$, the cellular chain map $\tilde{f}$  
 gives rise to a $\Z\pi$-matrix $ \tilde{F}_d $ with respect to the above basis,i.e  
$\tilde{F}_d=(a_{ij})$ if $ \tilde{f}(\tilde{e}_i^d)=\sum_{j}a_{ij}\tilde{e}_j^d $,where
 $ a_{ij}\in \Z\pi $.Then we have the Reidemeister trace formula
  
\begin{equation} 
L_{\pi}(f)=\sum_{d}(-1)^d[\Tr \tilde{F}_d] \in \Z\pi_f . 
\end{equation}

Now we describe alternative approach to the Reidemeister trace formula proposed  by Jiang \cite{j}. This approach is useful when we study the periodic points of $f$,i.e. the fixed points of the iterates of $f$.  
  
 The mapping torus $T_f$ of $f:X\rightarrow X$ is the space obtained from $X\times [o,\infty )$ by identifying $(x,s+1)$ with $(f(x),s)$ for all $x\in X,s\in [0 ,\infty )$.On $T_f$ there is a natural semi-flow $\phi :T_f\times [0,\infty )\rightarrow T_f, \phi_t(x,s)=(x,s+t)$ for all $t\geq 0$.Then the map  $f:X\rightarrow X$ is the return map of the semi-flow $\phi $.A point $x\in X$ and a positive number $\tau >0$ determine the orbit curve $\phi _{(x,\tau )}:={\phi_t(x)}_{0\leq t \leq \tau}$ in $T_f$. 
Take the base point $x_0$ of $X$ as the base point of $T_f$.It is known that the fundamental group $H:=\pi_ 1(T_f,x_0)$ is obtained from $\pi $ by adding a new generator $z$ and adding the relations $z^{-1}gz=\tilde f_*(g)$ for all $g\in \pi =\pi _1(X,x_0)$.Let  $H _c$ denote the set of conjugacy classes in $H $. Let $\Z H $ be the integral group ring of $H $, and let $\Z H_c $ be the free Abelian group with basis $ H _c $.We again use the bracket notation $ a\rightarrow [a] $  for both projections $H \rightarrow H _c $ and $ \Z H \rightarrow \Z H _c $.  If  $F^n$ is a fixed point class  of $f^n$, then  
$f(F^n)$ is also fixed point class of $f^n$ and
 $\ind (f(F^n),f^n)=\ind (F^n,f^n)$. 
Thus $f$ acts as an index-preserving permutation among fixed point classes of $f^n$.By definition, an $n$-orbit class $O^n$  of $f$ to be the union of elements of an orbit of this action.In other words, two points $x,x'\in \Fix (f^n)$ are said to be in the same $n$-orbit class of $f$ if and only if some $f^i(x)$ and some $f^j(x')$ are in the same fixed point class of $f^n$.The set $\Fix (f^n)$ splits into a disjoint union of $n$-orbits classes.Point $x$ is a fixed point of $f^n$   
or a periodic point of period $n$ if and only if orbit curve  $\phi _{(x,n)}$ is a closed curve.  
The free homotopy class of the closed curve $\phi _{(x,n)}$ will be called the $H$ -coordinate  
of point $x$,written $cd_{H }(x,n)=[\phi _{(x,n)}]\in H _c$.It follows that periodic points $x$  
of period $n$ and $x'$ of period $n'$ have the same $H $-coordinate if and only if $n=n'$ 
 and $x$,$x'$ belong to the same $n$-orbits class of $f$.  
Thus it is possible equivalently define $x,x'\in \Fix (f^n) $ to be in the  
same $n$-orbit class if and only if they have the same $H-$coordinate.  
  Jiang \cite{j} has considered generalized Lefschetz number with respect to $H $ 
\begin{equation} 
L_{H }(f^n):= \sum_{O^n}\ind (O^n,f^n)\cdot cd_{H }(O^n) \in \Z H _c, 
\end{equation} 
and proved following trace formula: 
\begin{equation} 
L_{H }(f^n)=\sum_{d}(-1)^d[\Tr (z\tilde{F}_d)^n] \in \Z H_c, 
\end{equation} 
where $\tilde{F}_d$ be $\Z \pi$-matrices defined in (16) and $z\tilde{F}_d$ is regarded as a $\Z H $-matrix. 

\subsubsection{Twisted Lefschetz numbers and  twisted Lefschetz zeta function} \label{section: Lefschetz}

Let $R$ be a commutative ring with unity. Let $GL_n(R)$ be the group of invertible
$n\times n $ matrices in $R$, and $ M_{n\times n}(R) $ be the algebra of $n\times n $ matrices in $R$.
Suppose  a representation $\rho: H \rightarrow  GL_n(R)$ is given. It extends to a representation
$\rho: \Z H \rightarrow M_{n\times n}(R) $. Following  Jiang \cite{j} we define $\rho$-twisted 
Lefschetz  number 
\begin{equation} 
L_{\rho}(f^n):= \Tr (L_{H }(f^n))^{\rho}=\sum_{O^n}\ind (O^n,f^n)\cdot \Tr(cd_{H }(O^n))^{\rho} \in R, 
\end{equation} 
where $h^{\rho}$ is $\rho$-image of $h\in \Z H$.
It has the trace formula(see \cite{j})
\begin{equation} 
L_{\rho}(f^n)=\sum_{d}(-1)^d\Tr( (z\tilde{F}_d)^{\rho})^n \in R,
\end{equation} 
where for a $\Z H$-matrix $A$, its $\rho$-image $A^{\rho}$ means the block matrix
obtained from $A$ by replacing each element $a_{ij}$ with $n\times n$ $R$-matrix $a_{ij}^{\rho}$.
Twisted Lefschetz zeta function is defined as formal power series
$$L^f_{\rho}(t):= 
 \exp\left(\sum_{n=1}^\infty \frac{L_{\rho}(f^n)}{n} t^n \right).$$
 It is in the multiplicative subgroup $1+tR[[t]]$ of the formal power series
 ring $R[[t]]$.
The trace formula for the twisted Lefschetz numbers implies
that $L^f_{\rho}(t)$ is a rational function in $R$ given by the formula
\begin{equation} \label{eq:determinant}
L^f_{\rho}(t)=\prod_{d}
          \det\big(E-t(z\tilde{F}_d)^{\rho})^{(-1)^{d+1}} \in R(t),
\end{equation} 
where $E$ stands for suitable identity matrices.
Twisted Lefschetz zeta function enjoys the same invariance properties
as that of $L_{H }(f^n)$.

\subsection{Computation of symplectic Floer homology }

In this section we describe known results from \cite{c1}, \cite{G}, \cite{f,ff} about computation 
of symplectic Floer homology for different mapping classes.

\subsubsection{Thurston classification theorem and standard form maps}
We recall firstly  Thurston classification theorem for homeomorphisms of surfase $M$
of genus $\geq 2$.

\begin{theorem}\label{thm:thur}\cite{Th}
Every homeomorphism $\phi: M\rightarrow M $ is isotopic to a homeomorphism $f$
such that either\\
(1) $f$ is a periodic map; or\\
(2) $f$ is a pseudo-Anosov map, i.e. there is a number $\lambda >1$, the dilation of $f$,  and a pair of transverse measured foliations $(F^s,\mu^s)$ and $(F^u,\mu^u)$ such that $f(F^s,\mu^s)=(F^s,\frac{1}{\lambda}\mu^s)$ and $f(F^u,\mu^u)=(F^u,\lambda\mu^u)$; or\\
(3)$f$ is reducible map, i.e. there is a system of disjoint simple closed curves\\ 
$\gamma=\{\gamma_1,......,\gamma_k\}$ in $int M$ such that $\gamma$ is invariant by $f$
(but $\gamma_i$ may be permuted) and $ \gamma$ has a $f$-invariant
tubular neighborhood $U$  such that each component of $M\setminus U$  has negative
Euler characteristic and on each(not necessarily connected) $f$-component of
$M\setminus U$, $f$ satisfies (1) or (2).

\end{theorem}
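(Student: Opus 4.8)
The plan is to follow Thurston's original argument through the action of the mapping class on Teichm\"uller space and its Thurston compactification. Let $\Teich(M)$ denote the Teichm\"uller space of $M$, a finite-dimensional open cell on which $\Mod(M)$ acts. The geometric input I would invoke is Thurston's compactification $\overline{\Teich(M)}=\Teich(M)\cup\mathcal{PMF}(M)$, a closed ball whose boundary is the space $\mathcal{PMF}(M)$ of projective measured foliations, with the $\Mod(M)$-action extending continuously to the boundary; the embedding records the hyperbolic lengths (equivalently, geometric intersection numbers) of a suitable finite family of simple closed curves. Granting this, the self-homeomorphism of the closed ball induced by $\phi$ has a fixed point by the Brouwer fixed point theorem, and the rest of the proof is a case analysis on where such a fixed point can sit.

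\emph{Interior fixed point.} If $\phi$ fixes a point of $\Teich(M)$, then after an isotopy $\phi$ is an isometry of a hyperbolic metric on $M$; since the isometry group of a closed hyperbolic surface is finite, $\phi$ is isotopic to a periodic homeomorphism, which is case~(1).

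\emph{Boundary fixed point.} Otherwise there is a fixed point $[\mathcal{F}]\in\mathcal{PMF}(M)$, giving a measured foliation $\mathcal{F}$ with $\phi(\mathcal{F})=\lambda\mathcal{F}$ up to isotopy and rescaling for some $\lambda>0$. If $\mathcal{F}$ is not \emph{filling}, the nonempty collection of isotopy classes of essential simple closed curves having zero intersection number with $\mathcal{F}$ yields, after pruning to an essential multicurve, a $\phi$-invariant system $\gamma=\{\gamma_1,\dots,\gamma_k\}$ of disjoint simple closed curves; cutting along a $\phi$-invariant tubular neighborhood $U$, each complementary piece has negative Euler characteristic, and applying the same trichotomy to the first-return maps on the pieces (by induction on complexity) shows that on each piece $\phi$ is periodic or pseudo-Anosov, which is case~(3). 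If instead $\mathcal{F}$ is filling, I would run the argument also for $\phi^{-1}$ to obtain a transverse filling foliation $\mathcal{G}$ with reciprocal stretch factor; a further argument (tracking growth of geometric intersection numbers, or ruling out $\lambda=1$ on the filling locus directly) forces $\lambda>1$, and after a normalization step --- the pair of transverse filling foliations is encoded by a quadratic differential, and a homeomorphism preserving such a pair with reciprocal dilatations is determined up to isotopy --- $\phi$ is isotopic to a genuine pseudo-Anosov map with dilatation $\lambda$, which is case~(2).

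The step I expect to be the main obstacle is the geometric input itself: that Thurston's compactification is actually homeomorphic to a closed ball and that the $\Mod(M)$-action extends continuously to its boundary. This is the analytic and combinatorial heart of the matter --- it rests on the theory of measured foliations (or, equivalently, geodesic laminations) and on the properness of the length-function embedding --- together with the normalization step above that promotes ``preserves a projective filling foliation'' to ``honest pseudo-Anosov''. A self-contained alternative I would be willing to present instead is the Bestvina--Handel algorithm: represent $\phi$ by a graph map on a spine of $M$, then iterate folding and tightening moves to shrink the transition matrix; the process terminates either in an efficient train-track representative, whose Perron--Frobenius transition matrix exhibits the pseudo-Anosov structure with $\lambda$ its leading eigenvalue, or in a $\phi$-invariant proper subgraph, which exhibits either a reduction (case~(3)) or periodicity (case~(1)). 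This route avoids Teichm\"uller theory altogether, at the cost of careful bookkeeping with the graph and its fundamental group.
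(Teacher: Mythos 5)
This theorem is not proved in the paper at all; it is quoted verbatim as Thurston's classification theorem with a citation to \cite{Th}, so there is no argument of the author's to compare yours against. What I can do is assess your sketch on its own terms.

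Your outline does follow the standard Thurston route (Brouwer on $\overline{\Teich(M)}$, then a case analysis on where the fixed point sits), and the Bestvina--Handel alternative you mention is also a genuine route, so the overall shape is right. But as written, the sketch has several real gaps beyond the one you flag. First, in the non-filling boundary case, the set of essential simple closed curves $\gamma$ with $i(\gamma,\mathcal F)=0$ is $\phi$-invariant as a \emph{set}, but its elements are not pairwise disjoint, so ``pruning to an essential multicurve'' does not by itself produce a $\phi$-invariant multicurve; you need a canonical object, e.g.\ the boundary of the regular neighborhood of the sublamination that $\mathcal F$ fills, and you must verify that the resulting curves are essential and pairwise non-isotopic and that the complementary pieces have negative Euler characteristic. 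Second, in the filling case you quietly assume that the eigenvalue $\lambda$ of the projective fixed point is $\neq 1$; the case of a filling fixed foliation with $\lambda=1$ requires a separate argument (it leads back to periodicity, typically via Teichm\"uller's uniqueness theorem or a length-spectrum argument), and your ``run it for $\phi^{-1}$'' step does not automatically produce a \emph{transverse} filling foliation without additional work, especially when the boundary fixed set is a positive-dimensional simplex coming from a non--uniquely-ergodic arational lamination. Third, the passage from ``$\phi$ preserves a transverse pair of projective measured foliations with reciprocal stretch'' to ``$\phi$ is isotopic to an honest pseudo-Anosov map'' is exactly the normalization you wave at, and it is a theorem in its own right. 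Finally, the fact that $\overline{\Teich(M)}$ is a ball with continuous $\Mod(M)$-action, which you correctly identify as the heart, is a substantial theorem, not an input one can invoke lightly in a proof of the classification itself. None of these is fatal to the strategy, but each is a step where the sketch would break if pushed; the Bestvina--Handel route avoids the first and last issues at the cost of the combinatorial bookkeeping you describe.
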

The map $f$ above is called a  Thurston canonical representative of $\phi$. In (3) it
can be chosen so that some iterate $f^m$ is a generalised Dehn twist on $U$.
A key observation is that if $f$ is canonical representative, so are all iterates of $f$.

Thurston classification theorem for homeomorphisms of surfase implies  that
every mapping class of $M$ is precisely one of the following:
 periodic, pseudo-Anosov or reducible.

In this section we review  standard form maps as discussed in \cite{G} and \cite{c1}.
These are special representative of mapping classes adopted to the symplectic geometry.
 For the identity mapping class, a standard form map is a small perturbation of the identity map by the Hamiltonian flow associated to a Morse function for which the boundary components are locally minima and maxima. Every fixed point is in the same Nielsen class. This Nielsen class has index given by the Euler characteristic of the surface. For non-identity periodic mapping classes, a standard form map is an isometry with respect to a hyperbolic structure on the surface with geodesic boundary. Every fixed point is in a separate Nielsen class and each of the Nielsen classes for which there is a fixed point has index one. For a pseudo-Anosov mapping classes, a standard form map is a symplectic smoothing(see \cite{c1}) of the singularities and boundary components of the canonical singular representative. Each singularity has a number $p\geq 3$ of prongs and each boundary component has a number $p \geq 1$ of prongs. If a singularity or boundary component is (setwise) fixed, it has some fractional rotation number modulo $p$(see \cite{c1}). There is a separate Nielsen class for every smooth fixed point, which is of index one or minus one; for every fixed singularity, which when symplectically smoothed gives $p-1$ fixed points all of index minus one if the rotation number is zero modulo $p$ or one fixed point of index one otherwise; and for every fixed boundary component with rotation number zero modulo $p$, which when symplectically smoothed gives $p$ fixed points all of index minus one \cite{c1}.

From this discussion, we see that for non-identity periodic and pseudo-Anosov mapping classes, the standard form map is such that all fixed points are nondegenerate of index +1 or -1 and, for every Nielsen class $F$, the number of fixed points in $F$ is $|\ind(F)|$. We now turn to reducible maps and the identity map.

By Thurston's classification (see \cite{Th} and \cite{flp}; also \cite[Definition 8]{G} and \cite[Definition 4.6]{c1}), in a reducible mapping class $g$, there is a (not necessarily smooth)  map $\phi$ which satisfies the following:

\begin{dfn}
\label{reduciblestandard}
A reducible map $\phi$ is in \emph{standard form} if there is a $\phi$-and-$\phi^{-1}$-invariant finite union of disjoint noncontractible (closed) annuli $U \subset M$ such that:
\be
\ii \label{twist} For $N$ a component of $U$and $\ell$ the smallest positive integer such that $\phi^\ell$ maps $N$ to itself, the map $\phi^\ell|_N$ is either a \emph{twist map} or a \emph{flip-twist map}. That is, with respect to coordinates $(q,p)\in[0,1]\times S^1$, we have one of \beqa(q,p)&\mapsto& (q,p-f(q)) \qquad \textrm{(twist map)} \\ (q,p)&\mapsto& (1-q,-p+f(q)) \qquad \textrm{(flip-twist map)},\eeqa
where $f : [0,1]\rightarrow \RR$ is a strictly monotonic smooth map. We call the (flip-)twist map \emph{positive} or \emph{negative} if $f$ is increasing or decreasing, respectively. Note that these maps are area-preserving.
\ii Let $N$ and $\ell$ be as in (\ref{twist}). If $\ell = 1$ and $\phi|_U$ is a twist map, then $\Im(f)\subset[0,1]$. That is, $\phi|_{\textrm{int}(N)}$ has no fixed points. (If we want to twist multiple times, we separate the twisting region into parallel annuli separated by regions on which the map is the identity.) We further require that parallel twisting regions twist in the same direction.
\ii For $S$ a component of $M \backslash N$ and $\ell$ the smallest integer such that $\phi^\ell$ maps $S$ to itself, the map $\phi^\ell|_S$ is area-preserving and is either isotopic to the \emph{identity},  \emph{periodic}, or \emph{pseudo-Anosov}. In these cases, we require the map to be in standard form as above.
\ee
\end{dfn}

Thurston classification theorem for homeomorphisms of surfase implies  that
every mapping class of $M$ is precisely one of the following:
 periodic, pseudo-Anosov or reducible.

\subsubsection{ Periodic mapping classes}

\begin{theorem}\label{thm:main}\cite{G}, \cite{ff}
If $\phi$ is  a non-trivial, orientation
preserving, standard form periodic diffeomorphism of a compact connected surface $M$
of Euler characteristic $\chi(M) \leq  0$, then $\phi$ is monotone symplectomorphism  with
respect to some $\phi$-invariant area form and
$$
\dim \HF(\phi) = L(\phi)= N (\phi)
$$
where  $L(\phi),  N (\phi)$ denote the Lefschetz and  the Nielsen  number of  $\phi$ correspondingly.
\end{theorem}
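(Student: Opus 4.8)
The plan is to compute $\dim\HF(\phi)$ directly from a standard form representative, matching it term‑by‑term with the fixed point data supplied by the classification of standard form periodic maps. The key observation is that for a non‑trivial standard form periodic $\phi$, all fixed points are nondegenerate (the map is an isometry of a hyperbolic metric with geodesic boundary, hence has isolated fixed points with $\det(\id - \de\phi_y)\neq 0$), each fixed point occupies its own Nielsen class, and each such Nielsen class has index $+1$. Since $\chi(M)\leq 0$, by the monotonicity results recalled from \cite{G,ff} such a $\phi$ is a monotone symplectomorphism for a suitable $\phi$‑invariant area form, so $\HF(\phi)$ is defined and the Floer complex is $\CF(\phi)=\Z_2^{\#\Fix(\phi)}$ with the $\Z_2$‑grading $(-1)^{\deg y}=\sign\det(\id-\de\phi_y)$.

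First I would establish the chain‑level vanishing of the differential. The crucial input is that Floer trajectories preserve the Nielsen class: any $u\in\M(y^-,y^+;J,\phi)$ gives a path $s\mapsto u(s,\cdot)$ in $\Of$ whose endpoints $y^\pm$ are therefore forced to lie in the same connected component, i.e.\ the same $\tilde\phi_*$‑conjugacy class, hence the same Nielsen class. But each Nielsen class of a non‑trivial standard form periodic $\phi$ contains exactly one fixed point. Therefore $y^-=y^+$, there are no index‑one connecting trajectories between distinct generators, and $\partial_J = 0$. Consequently $\HF(\phi)\cong\CF(\phi)$ and $\dim\HF(\phi)=\#\Fix(\phi)$.

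Next I would match the three quantities. Since every fixed point is alone in its Nielsen class and every such class has index $+1$, we get $N(\phi)=\#\{\text{essential classes}\}=\#\Fix(\phi)$, as all indices are nonzero. For the Lefschetz number, $L(\phi)=\sum_{y\in\Fix(\phi)}\ind(y) = \sum_{y}(+1) = \#\Fix(\phi)$, using that the local index of a nondegenerate fixed point equals $\sign\det(\id-\de\phi_y)$ and that this sign is $+1$ for each fixed point of the periodic standard form map (an orientation‑preserving isometry of a surface near an isolated fixed point is conjugate to a rotation, whose rotation angle is nonzero by non‑triviality, giving $\det(\id - R_\theta) = 2-2\cos\theta > 0$). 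Combining, $\dim\HF(\phi)=\#\Fix(\phi)=L(\phi)=N(\phi)$.

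**The main obstacle** I anticipate is justifying rigorously that the Floer differential respects Nielsen classes and, relatedly, pinning down that the standard form periodic map genuinely realizes "one fixed point per essential Nielsen class, index $+1$'' in every case covered by $\chi(M)\leq 0$ — including surfaces with boundary, where one must use the convention that $\phi$ has no boundary fixed points and check that the geodesic‑boundary isometry model has no interior degeneracies. The Nielsen‑class invariance of trajectories is standard (the homotopy $u(s,\cdot)$ directly exhibits the required path realizing the conjugacy), but one should also confirm there are no subtleties from the fact that the perturbation framework of \cite{S} is not needed here precisely because the fixed points are already nondegenerate. Once these points are in place the rest is the bookkeeping above; I would structure the write‑up as: (i) monotonicity and nondegeneracy of the standard form periodic map; (ii) $\partial_J = 0$ via Nielsen invariance; (iii) the index computation $\ind(y)=+1$; (iv) assembling $\dim\HF(\phi)=\#\Fix(\phi)=L(\phi)=N(\phi)$.
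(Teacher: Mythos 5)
Your proof is correct and matches the standard argument. Note first that the paper does not supply its own proof of this statement: it is cited to Gautschi~\cite{G} and to~\cite{ff}, and the fixed-point data you invoke (nondegeneracy, one fixed point per Nielsen class, index $+1$) is precisely what the paper recalls, without proof, in its review of standard form periodic maps.

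One structural remark: to get $\partial_J=0$ you appeal to Nielsen-class invariance of Floer trajectories. That step only excludes $\mu=1$ trajectories between \emph{distinct} generators, and formally leaves open a contribution of $y$ to $\partial_J y$. This is automatically excluded, but the cleaner route is the grading you have already set up: since $\det(\id-\de\phi_y)>0$ for every $y\in\Fix(\phi)$, all generators lie in the same $\Z_2$-degree, the target group $CF_{*+1}$ is trivial in the relevant parity, and $\partial_J=0$ with no geometric input at all (equivalently, the mod-$2$ formula $\mu(u)\equiv\deg y^+-\deg y^-$ forbids any $\mu=1$ trajectory when all degrees agree, including $y^+=y^-$). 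Thus the Nielsen-class mechanism is not needed for this particular theorem, though it is the right tool in Gautschi's more general finite-type setting, where generators of both parities occur. A second small point: the hyperbolic-isometry model you describe is the one for $\chi(M)<0$; for $\chi(M)=0$ one uses a flat metric, but the local picture near a fixed point is still a nontrivial rotation, so the index computation $2-2\cos\theta>0$ and everything downstream are unchanged.
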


\subsubsection{Algebraically finite mapping classes}

 A mapping class of $M$ is called  algebraically finite if it
does not have any pseudo-Anosov components in the sense of Thurston's
theory of surface diffeomorphism.The term algebraically finite goes back to J. Nielsen \\
 In  \cite{G}  the diffeomorphisms  of finite type  were defined . These are reducible map
 in standard form which are
special representatives of algebraically finite mapping classes 
adopted to the symplectic geometry.

By  $M_{\id}$ we denote the union of the components
of $M\setminus\text{int}(U)$, where $\phi$ restricts to the identity.

The monotonicity of diffeomorphisms of finite type
was  investigated in details in \cite{G}.
Let $\phi$ be a diffeomorphism of finite type and
$\ell$ be as in (1).
Then $\phi^\ell$ is the product of (multiple)  Dehn twists along $U$.
Moreover, two parallel Dehn twists have the same sign. We say that
$\phi$ has  uniform twists, if $\phi^\ell$ is the product of only
positive, or only negative Dehn twists.
\smallskip\\
Furthermore, we denote by $\ell$ the smallest positive integer such that
$\phi^\ell$ restricts to the identity on $M\setminus U$.

If $\omega'$ is an area form on $M$ which is the standard form
$\de q\wedge\de p$ with respect to the $(q,p)$-coordinates on $U$, then
$\omega:=\sum_{i=1}^\ell(\phi^i)^*\omega'$ is
standard on $U$ and $\phi$-invariant, i.e. $\phi\in\symp(M,\omega)$.
To prove that $\omega$ can be chosen such that $\phi\in\symp^m(M,\omega)$,
Gautschi distinguishes two cases: uniform and non-uniform twists. In the first case he proves the following stronger statement.
\begin{lemma}\label{lemma:monotone3}\cite{G}
If $\phi$ has uniform twists and $\omega$ is a $\phi$-invariant area
form, then $\phi\in\symp^m(M,\omega)$.
\end{lemma}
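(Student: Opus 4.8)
The plan is to verify the defining condition $m(\phi)=0$ in $H^1(M;\R)/\im(\id-\phi^*)$ directly, by pairing $m(\phi)$ against the $\phi_*$-fixed first homology of $M$. Since $r^*\omega_\phi=\omega$ and $r^*c_\phi=e(TM)$, the class $\eta:=[\omega_\phi]-(\area_\omega(M)/\chi(M))\,c_\phi$ restricts to $0$ on the fibre, so by the exact sequence \eqref{eq:cohomology} it lies in $\im d$, and indeed $\eta=d\,m(\phi)$; as $d$ is injective, $\phi\in\symp^m(M,\omega)$ iff $\eta=0$. By Poincar\'e duality on the closed oriented surface $M$ (using that $\phi$ is orientation preserving), $H^1(M;\R)/\im(\id-\phi^*)$ is dual to $\ker\bigl(\id-\phi_*\colon H_1(M;\R)\to H_1(M;\R)\bigr)$, so it suffices to show that $\int_{\Sigma_\gamma}\eta=0$ for $\gamma$ running over a set of (possibly disconnected) curves whose homology classes span this kernel. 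Here $\Sigma_\gamma\subset T_\phi$ is the $2$-cycle obtained by suspending $\gamma$ and capping off with a $2$-chain $W$ lying in a fibre with $\partial W=\gamma-\phi(\gamma)$, and one computes $\int_{\Sigma_\gamma}\omega_\phi=\int_W\omega$ and $\int_{\Sigma_\gamma}c_\phi=e(V_\phi|_{\Sigma_\gamma})$.

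Using the standard-form decomposition $M=M_{\id}\cup U\cup\bigcup_j M_j$, with $\phi|_{M_{\id}}=\id$, $\phi^\ell|_{M\setminus U}=\id$, and $\phi^\ell|_U$ a product of positive Dehn twists along the cores $\delta_i$ of the annuli $N_i\subset U$ (the negative case being symmetric), I would write $\ker(\id-\phi_*|_{H_1})$ as spanned by classes of two kinds: (i) curves lying in $M_{\id}$, in the periodic part $\bigcup_j M_j$, or isotopic to a core $\delta_i$; and (ii) curves that genuinely cross the annuli $N_i$. For a class of type (i), $\gamma$ can be isotoped off the twisting region and $\phi$ acts on a neighbourhood of the $\phi$-orbit of $\gamma$ as the identity, a fixed shear, or a finite-order isometry; passing to that orbit one may take $W$ empty, whence $\int_{\Sigma_\gamma}\omega_\phi=0$ (since $\omega_\phi$ is fibrewise and $\Sigma_\gamma$ suspends a $1$-cycle) and $e(V_\phi|_{\Sigma_\gamma})=0$ (the bundle is trivial or has finite holonomy) --- this is exactly the computation underlying the periodic case, Theorem \ref{thm:main}. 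So $\int_{\Sigma_\gamma}\eta=0$ on all type-(i) classes.

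The substantive case is (ii). If $\phi_*[\gamma]=[\gamma]$ and $\gamma$ genuinely crosses the twisting annuli, it must do so with total algebraic intersection zero against each relevant combination of cores; consequently its capping chain $W$ is, modulo cycles in the fibre, a signed sum of copies of the annuli $N_i$ swept out by the twist maps, and $e(V_\phi|_{\Sigma_\gamma})$ is the corresponding signed twist count. The task is to show that these two quantities stand in the ratio $\area_\omega(M)/\chi(M)$ for \emph{every} $\phi$-invariant $\omega$, and this is where I expect the uniform-twist hypothesis to be decisive: since all twists carry the same sign, the vanishing of the algebraic intersection numbers forces the swept annuli to cancel in pairs, so that $\int_W\omega=0$ and $e(V_\phi|_{\Sigma_\gamma})=0$ simultaneously, irrespective of the areas assigned to the $N_i$; hence $\int_{\Sigma_\gamma}\eta=0$ and $m(\phi)=0$. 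I would make this explicit in the twist coordinates $(q,p)$ on the $N_i$, using $\omega|_{N_i}=\de q\wedge\de p$ and the fact that a positive twist sweeps its annulus with a definite sign. (For non-uniform twists the pairwise cancellation fails, and one must instead prescribe the areas of the $N_i$ so that the signed sum vanishes; this is why there only the existence of a monotone invariant area form is available.)

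The step I expect to be the main obstacle is precisely this last one: identifying the homology class of the capping chain $W$ sharply enough to exhibit the pairwise cancellation, and verifying it uniformly in the choice of $\phi$-invariant $\omega$. A subsidiary technical point is the spanning claim, that classes of types (i) and (ii) exhaust $\ker(\id-\phi_*|_{H_1(M;\R)})$, which should follow from a change-of-coordinates argument adapted to the curve system $\{\delta_i\}$.
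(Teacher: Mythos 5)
Your overall scaffolding is the correct one and, as far as I can tell, the same one Gautschi uses in \cite{G}: reduce monotonicity to the vanishing of $\eta=[\omega_\phi]-(\area_\omega(M)/\chi(M))c_\phi$ against a spanning set of $H_2(T_\phi;\R)$, note that Poincar\'e duality and the Wang sequence \eqref{eq:cohomology} identify the relevant quotient with $\ker(\id-\phi_*)$ on $H_1(M;\R)$, and then check representatives of that kernel. Your type-(i) computation (vertical tori over cycles avoiding the twist region give $\int\omega_\phi=0$ since $\omega_\phi$ has no $\de t$-component, and $e(V_\phi)=0$ since the holonomy is periodic or trivial) is fine.

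The gap is in type (ii), and it is the same gap you flag yourself. You assert that the algebraic intersection numbers $\gamma\cdot\delta_i$ ``vanish'' and then invoke ``cancellation in pairs'' of the swept regions, but you never derive the vanishing — and this derivation is precisely where the uniform-twist hypothesis does its work. The correct statement is: if $\phi^\ell$ is a product of positive Dehn twists (multiplicities $k_i>0$) along disjoint cores $\delta_i$, then $\phi^\ell_*[\gamma]-[\gamma]=\sum_i k_i(\gamma\cdot\delta_i)[\delta_i]$; pairing this with $[\gamma]$ under the intersection form and using $\delta_i\cdot\delta_j=0$ gives $\sum_i k_i(\gamma\cdot\delta_i)^2=0$, hence $\gamma\cdot\delta_i=0$ for every $i$ individually. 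This is a positive-definite quadratic form argument, and it is exactly what fails for non-uniform twists (where the $k_i$ carry both signs and the form is indefinite, so nonzero $\gamma\cdot\delta_i$ can occur — forcing the careful choice of area form in Lemma~\ref{lemma:monotone4}). Once you know $\gamma\cdot\delta_i=0$ for each $i$, the class $[\gamma]$ has a representative disjoint from every core, hence from $U$, and your type-(ii) case collapses entirely into type (i); there is no residual case in which you would need to track swept areas against Euler numbers. In other words, the ``pairwise cancellation of swept annuli'' is the shadow of the real mechanism — it is what one sees if one insists on keeping a representative crossing $N_i$ — but the clean and complete route is to prove $\gamma\cdot\delta_i=0$ outright and then isotope $\gamma$ off $U$. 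As written, your argument treats the vanishing as a given and puts the uniform hypothesis in the wrong place (as a device to ``force'' the cancellation rather than to force the intersection numbers to vanish), so the crucial step is unjustified. Inserting the quadratic-form argument above would close the gap and would also bring the proof into line with \cite{G}.
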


In the non-uniform case, monotonicity  does not
hold for arbitrary $\phi$-invariant area forms.
\begin{lemma}\label{lemma:monotone4}\cite{G}
If $\phi$ does not have uniform twists, there exists a $\phi$-invariant
area form $\omega$ such that $\phi\in\symp^m(M,\omega)$. Moreover,
$\omega$ can be chosen such that it is the standard form $\de q\wedge\de p$ on
$U$.
\end{lemma}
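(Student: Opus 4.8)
The plan is to deduce the statement from the growth computation of Theorem~\ref{th:main} by means of two standard facts from one-variable complex analysis: the Cauchy--Hadamard formula and Pringsheim's theorem on power series with non-negative coefficients. Throughout set $a_n:=\dim\HF\big((\bar\phi)^n\big)\in\Z_{\ge0}$, so that
\[
F_g(t)=F_{\bar\phi}(t)=\exp\big(h(t)\big),\qquad h(t)=\sum_{n=1}^{\infty}\frac{a_n}{n}\,t^{\,n}.
\]

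First I would compute the radius of convergence $\rho_h$ of the series $h$. By the Cauchy--Hadamard formula together with $\lim_{n\to\infty}n^{1/n}=1$,
\[
\frac1{\rho_h}=\limsup_{n\to\infty}\Big(\frac{a_n}{n}\Big)^{1/n}
=\limsup_{n\to\infty}\big(a_n^{1/n}\,n^{-1/n}\big)
=\limsup_{n\to\infty}a_n^{1/n},
\]
the last equality holding because $(a_n^{1/n})$ is bounded (since $\lambda<\infty$) while $n^{-1/n}\to1$. By Theorem~\ref{th:main}, $\grow(a_n)=\lambda$, hence $\limsup_{n\to\infty}a_n^{1/n}=\lambda$; in particular $a_n\neq0$ for infinitely many $n$, which is in any case forced by the structure of standard form maps — for $n$ divisible by the common period of the non-pseudo-Anosov pieces, $\bar\phi$ restricts on the complementary subsurfaces of negative Euler characteristic to a perturbed identity and so contributes a nonzero index. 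Thus $\rho_h=1/\lambda$, which is positive because $1\le\lambda<\infty$.

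Next I would pass from $h$ to $F_g=\exp(h)$. Since $\exp$ is entire, $F_g$ is holomorphic on $\{|t|<\rho_h\}$, so its radius of convergence $R$ satisfies $R\ge\rho_h=1/\lambda$. The only delicate point is the reverse inequality $R\le\rho_h$, i.e.\ ruling out that exponentiation enlarges the disc of convergence through a conspiracy of cancellations. Here I would use that all Taylor coefficients of $h$, and hence of $F_g=\exp(h)$, are non-negative, so by Pringsheim's theorem $h$ has a genuine singularity at the real point $t=\rho_h$ of its circle of convergence. If one had $R>\rho_h$, then $F_g$ would be holomorphic in a neighbourhood of $t=\rho_h$; but $F_g(t)=\exp(h(t))\ge1$ for $t\in(0,\rho_h)$, so by continuity $F_g$ is nowhere zero near $\rho_h$, the branch of $\log F_g$ agreeing with $h$ on $(0,\rho_h)$ is holomorphic near $\rho_h$, and therefore $h=\log F_g$ would extend holomorphically across $\rho_h$ — contradicting Pringsheim. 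Hence $R=\rho_h=1/\lambda>0$, as claimed.

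Finally, to match the remark in the introduction that $R$ ``admits exact algebraic estimation via the Reidemeister trace formula'', I would note that by Theorem~\ref{th:main} the number $1/R=\lambda$ equals the asymptotic Nielsen number $N^\infty(\psi)$ (equivalently the asymptotic absolute Lefschetz number $L^\infty(\psi)$) of the canonical representative $\psi$ of $g$; by the trace formula for the twisted Lefschetz numbers and the determinant expression \eqref{eq:determinant} for the rational twisted Lefschetz zeta function, this number is an algebraic integer read off from the spectra of the integral matrices $z\tilde F_d$, i.e.\ computable from the Reidemeister trace of $\bar\phi$. I expect the whole argument to be routine apart from the non-negativity/Pringsheim step of the third paragraph, where the positivity of $\exp$ on the real axis is exactly what is needed to close the gap.
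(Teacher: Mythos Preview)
Your proposal does not address the stated lemma at all. Lemma~\ref{lemma:monotone4} asserts the existence of a $\phi$-invariant area form $\omega$ with $\phi\in\symp^m(M,\omega)$ in the non-uniform twist case; it is a statement about the geometry of monotone symplectomorphisms, cited from Gautschi~\cite{G}, and the paper gives no proof of it (it is simply quoted as background). What you have written is instead an argument for the theorem in Section~3.4 computing the radius of convergence $R=1/\lambda$ of the symplectic zeta function $F_g(t)$ for reducible mapping classes. Nothing in your text touches area forms, the twist regions $U$, the monotonicity class $m(\phi)$, or the flux homomorphism, which are the objects the lemma is about.

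If your intent was to prove the radius-of-convergence theorem, then your argument is more elaborate than what the paper does: the paper simply applies Cauchy--Hadamard to the coefficients $a_n/n$ and invokes Theorem~\ref{th:main} to get $\limsup a_n^{1/n}=\lambda$, declaring $R=1/\lambda$ without the Pringsheim step. Your extra work showing $R\le\rho_h$ via non-negativity and the logarithm is a genuine addition, since in principle $\exp(h)$ could extend beyond the disc of convergence of $h$; the paper tacitly identifies the radius of convergence of $F_g$ with that of $h$. But none of this is relevant to Lemma~\ref{lemma:monotone4}, so as a proof of the statement you were asked about, the proposal is simply off target.
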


\begin{theorem}\label{th:ga}\cite{G}
Let  $\phi$ be  a diffeomorphism of finite type, then $\phi$ is monotone with
respect to some $\phi$-invariant area form and
$$
\dim\HF(\phi) =
\dim H_*(M_{\id},\p_{M_{\id}};\Z_2) +
L(\phi|M\setminus M_{\id}).
$$
Here, $L$ denotes the Lefschetz number.
\end{theorem}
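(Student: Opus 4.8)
We sketch the argument; the monotonicity half is essentially already established by the preceding lemmas.

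The monotonicity assertion needs nothing new: with $\omega'=\de q\wedge\de p$ on the twisting region $U$, the form $\omega=\sum_{i=1}^{\ell}(\phi^{i})^{*}\omega'$ is $\phi$-invariant and standard on $U$, and Lemma \ref{lemma:monotone3} (uniform twists) or Lemma \ref{lemma:monotone4} (non-uniform twists) upgrades it to one with $\phi\in\symp^{m}(M,\omega)$. So the content is the dimension formula, and the plan is to compute $\HF(\phi)$ by passing to a convenient nondegenerate representative and then splitting the Floer complex along Nielsen classes.

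Recall that $\HF(\phi)$ is defined via a perturbation of the action form and is insensitive to small Hamiltonian perturbations, so I would work with the model $\bar\phi$ obtained as follows. Away from $M_{\id}$ the map is already rigid: on each periodic component the standard-form map is a hyperbolic isometry, whose fixed points are isolated, nondegenerate and of index $+1$, and on the twisting annuli the condition $\Im(f)\subset[0,1]$ leaves no fixed points in the interior. The only degeneracy sits on $M_{\id}$, where $\phi=\id$; there I would perturb by the time-$\e$ Hamiltonian flow of a Morse function $h$ on $M_{\id}$ whose boundary $\p M_{\id}$ (the circles shared with the adjacent twisting annuli) is a union of maxima, with the sign dictated by the twisting direction. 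The uniform/non-uniform dichotomy and the requirement that parallel twisting regions twist in the same direction are exactly what make a globally consistent such $h$ possible. The resulting $\bar\phi$ is still monotone and nondegenerate, has the critical points of $h$ as its fixed points in $M_{\id}$, and, by the discussion preceding the theorem, has every Nielsen class meeting $M\setminus M_{\id}$ equal to a single fixed point of index $+1$.

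Now for the structure. A Floer cylinder joining $y^{-}$ to $y^{+}$ determines a homotopy in the mapping torus between the associated constant sections, so $y^{-}$ and $y^{+}$ lie in the same Nielsen class; and, again by the discussion above, the standard-form geometry localizes the Nielsen classes, so those meeting $M_{\id}$ are disjoint from those meeting $M\setminus M_{\id}$. Hence $\CF(\bar\phi)=A\oplus B$ as chain complexes, $A$ generated by the fixed points in $M_{\id}$ and $B$ by those in $M\setminus M_{\id}$. On $B$ every Nielsen class is a single generator and the Floer differential has odd degree, so it vanishes on $B$; this summand contributes $\#\Fix(\bar\phi|_{M\setminus M_{\id}})=L(\phi|_{M\setminus M_{\id}})$ to $\dim\HF$, the last equality because those fixed points all have index $+1$ and each Nielsen class carries exactly one. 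On $A$, for $\e$ small the relevant Floer cylinders stay inside $M_{\id}$ and coincide with the negative gradient lines of $h$ --- the usual Floer--Morse comparison, with no bubbling since $\pi_{2}(M)=0$ --- so the homology of $A$ is the Morse homology of $(M_{\id},h)$, which with the chosen boundary behaviour is $H_{*}(M_{\id},\p M_{\id};\Z_{2})$. Summing the two contributions yields the formula.

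The step I expect to be the main obstacle is the analysis along $\p M_{\id}$, where the identity region abuts the twisting annuli. One must arrange the Hamiltonian perturbation so that (i) no spurious fixed points survive near the common boundary circles; (ii) the Floer cylinders contributing to $A$ genuinely remain in $M_{\id}$ --- using monotonicity to bound their energy together with the absence of interior fixed points in $U$, so that a cylinder cannot escape across a twisting annulus --- and reduce to Morse flow lines; and (iii) the twisting orientation forces the relative group $H_{*}(M_{\id},\p M_{\id};\Z_{2})$ rather than the absolute homology. Producing a single Morse function $h$ compatible with all the adjacent twisting regions simultaneously, region by region, is the technical heart of the argument; the rest follows formally from the Nielsen-class splitting and these two local computations.
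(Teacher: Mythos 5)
The paper does not prove Theorem \ref{th:ga}; it is imported wholesale from Gautschi's paper \cite{G} and used as a black box, so there is no in-paper argument to compare against. Your sketch is nonetheless a reasonable reconstruction of the expected line of proof: monotonicity from Lemmas \ref{lemma:monotone3} and \ref{lemma:monotone4}, a Hamiltonian perturbation on $M_{\id}$ by a Morse function adapted to the boundary, the splitting of $\CF$ along Nielsen classes (since Floer cylinders only connect fixed points in the same Nielsen class of the mapping torus), a Floer--Morse comparison on $M_{\id}$, and a trivial differential on the isolated classes in $M\setminus M_{\id}$. That is the standard strategy and it matches Gautschi's published argument in outline.

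Two imprecisions worth flagging. First, you assert that the twisting annuli contribute no fixed points because $\Im(f)\subset[0,1]$, but that hypothesis in Definition \ref{reduciblestandard} is imposed only on genuine twist maps with $\ell=1$; a flip-twist component has two nondegenerate interior fixed points at $q=1/2$, each of index $+1$. These do fit the formula (they contribute equally to $L(\phi|_{M\setminus M_{\id}})$ and to $\dim\HF$) but they must be counted, and they are not in the periodic subsurface. Second, the claim that $\partial M_{\id}$ can be arranged to consist entirely of maxima of the perturbing Hamiltonian is exactly where the uniform/non-uniform distinction of Lemmas \ref{lemma:monotone3}--\ref{lemma:monotone4} bites: when parallel twists have opposite signs the adjacent boundary circles are forced to behave differently, and reconciling that with the relative homology group $H_*(M_{\id},\partial M_{\id};\Z_2)$ is the hard part of the argument. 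You rightly identify this as the technical heart, but it is not a side issue; Cotton-Clay's refinement (Theorem \ref{th:red}, with its decomposition $\partial_\pm M_{\id}$) exists in large part to make this bookkeeping explicit.
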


\subsubsection{ Pseudo-Anosov mapping classes}

For a pseudo-Anosov mapping classes, a standard form map is a symplectic smoothing of the singularities and boundary components of the canonical singular representative. Full description of the symplectic smoothing is given by A. Cotton-Clay in  \cite{c1}. Each singularity has a number $p\geq 3$ of prongs and each boundary component has a number $p \geq 1$ of prongs. If a singularity or boundary component is (setwise) fixed, it has some fractional rotation number modulo $p$. There is a separate Nielsen class for every smooth fixed point, which is of index one or minus one; for every fixed singularity, which when symplectically smoothed gives $p-1$ fixed points all of index minus one if the rotation number is zero modulo $p$ or one fixed point of index one otherwise; and there is a separate Nielsen class for every fixed boundary component with rotation number zero modulo $p$, which when symplectically smoothed gives $p$ fixed points all of index minus one.

\begin{theorem}\label{th:pa}( \cite{c1}, see also  \cite{ff})
If $\phi$ is any symplectomorphism with nondegenerate fixed points in
given pseudo-Anosov mapping class $ g $,  then
$\phi$ is weakly monotone, $\HF(\phi)$ is well defined  and
 $$ \dim\HF(\phi)=\dim\HF(g)=\sum_{x\in\Fix(\psi)}|\Ind(x)|,
$$
where $\psi$ is the singular canonical pseudo-Anosov representative of $g$. 
\end{theorem}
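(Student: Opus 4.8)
The plan is to follow the strategy of Cotton-Clay \cite{c1}. First I would verify that an arbitrary representative $\phi$ of the pseudo-Anosov class $g$ with nondegenerate fixed points is weakly monotone, so that $\HF(\phi)$ is defined; then invoke the invariance of Floer homology among weakly monotone symplectomorphisms in a fixed mapping class to replace $\phi$ by the standard form representative $\bar\phi$, a symplectic smoothing of the canonical singular pseudo-Anosov representative $\psi$; and finally compute $\dim\HF(\bar\phi)$ by splitting the Floer complex along Nielsen classes and showing that the differential vanishes on each summand for grading reasons.

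For weak monotonicity I would argue topologically. The $3$-manifold $T_\phi$ fibers over $S^1$ with fiber $M$ of negative Euler characteristic, so it has $\pi_2=0$ and is irreducible. An embedded torus $T\subset T_\phi$ for which $\pi|_T\colon T\to S^1$ is an $S^1$-fibration has as its fiber a simple closed curve $\gamma\subset M$ whose isotopy class is preserved by $\phi$; since a pseudo-Anosov map preserves the isotopy class of no essential simple closed curve, $\gamma$ is either inessential or boundary-parallel. In the inessential case $\gamma$ bounds a disk in a fiber of $T_\phi$, which is a compressing disk for $T$, so by irreducibility $T$ bounds a solid torus and $[T]=0$ in $H_2(T_\phi;\R)$; in the boundary-parallel case $T$ may be isotoped into a product neighborhood $A\times S^1$ of a boundary torus of $T_\phi$, on which $\omega_\phi$ is pulled back from the annulus $A$, so $[\omega_\phi]([T])=0$ because $H_2(A)=0$. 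Hence $[\omega_\phi]$ vanishes on every generator of $T(T_\phi)$, a fortiori on $\ker(c_\phi|_{T(T_\phi)})$, so $\phi\in\symp^{wm}(M,\omega)$ and $\HF(\phi)$ is well-defined by the construction recalled in Section~2. By the cited invariance of Floer homology under weakly monotone deformation within a mapping class, $\HF(\phi)\cong\HF(g)\cong\HF(\bar\phi)$, and it remains to compute $\dim\HF(\bar\phi)$.

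For the computation I would first note that any solution $u$ of \eqref{eq:corbit} asymptotic to $y^{\pm}\in\Fix(\bar\phi)$ yields a path in $M$ from $y^-$ to $y^+$ exhibiting them in the same fixed point (Nielsen) class; hence the differential $\partial_J$ respects the direct sum decomposition $CF_*(\bar\phi)=\bigoplus_F CF^F_*(\bar\phi)$ over the Nielsen classes $F$ of $\bar\phi$, and $HF_*(\bar\phi)=\bigoplus_F HF^F_*(\bar\phi)$. By the properties of the pseudo-Anosov standard form map recalled before the statement, every fixed point of $\bar\phi$ is nondegenerate of index $\pm1$ and, for each Nielsen class $F$, the number of fixed points of $\bar\phi$ in $F$ equals $|\ind(F,\bar\phi)|$; since the cardinality of a set of $\pm1$-indexed points equals the absolute value of its total index only when all the signs coincide, the $\Z_2$-grading $(-1)^{\deg y}=\sign\det(\id-\de\bar\phi_y)$ is constant on each $F$. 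As $\partial_J$ has odd degree it therefore vanishes on $CF^F_*(\bar\phi)$, so $\dim HF^F_*(\bar\phi)=\#(F\cap\Fix(\bar\phi))=|\ind(F,\bar\phi)|$, whence $\dim\HF(\bar\phi)=\sum_F|\ind(F,\bar\phi)|$.

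To conclude I would identify $\sum_F|\ind(F,\bar\phi)|$ with $\sum_{x\in\Fix(\psi)}|\Ind(x)|$. The symplectic smoothing gives a bijection between the Nielsen classes $F$ of $\bar\phi$ and the fixed points $x$ of $\psi$ --- a smooth fixed point, a fixed singularity, or a fixed boundary component, each counted once with its fixed-point index --- under which $|\ind(F,\bar\phi)|=|\Ind(x,\psi)|$, as one reads off from the local models (for instance $\Ind(x,\psi)=1-p$ versus $\ind(F,\bar\phi)=-(p-1)$ at a fixed $p$-pronged singularity with zero fractional rotation number, and $\pm1$ versus $\pm1$ at a smooth fixed point). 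Summing and combining the previous steps yields $\dim\HF(\phi)=\dim\HF(g)=\dim\HF(\bar\phi)=\sum_{x\in\Fix(\psi)}|\Ind(x)|$. The parts requiring the most care are the analytic inputs imported from \cite{c1} --- that the Floer differential respects Nielsen classes, and the precise local structure (index signs and fixed-point counts) of the symplectic smoothing near singularities and boundary components --- together with the topological verification of weak monotonicity and of the fixed-point bijection with $\psi$; granting these, the remaining grading argument is elementary.
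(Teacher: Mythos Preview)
The paper does not supply its own proof of this theorem: it is quoted from Cotton-Clay \cite{c1} (with a cross-reference to \cite{ff}), and the only material the paper provides is the preceding paragraph summarizing the local structure of the symplectically smoothed standard-form representative. So there is no in-paper argument to compare against; your proposal is, however, a faithful and essentially correct outline of the strategy in \cite{c1}: weak monotonicity, reduction to the standard-form representative $\bar\phi$ by invariance, the Nielsen-class splitting of $CF_*(\bar\phi)$, and the parity argument forcing $\partial_J=0$ on each summand because all generators in a given Nielsen class carry the same $\Z_2$-grading.

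Two small points you might tighten in a full write-up. First, your weak-monotonicity argument invokes the pseudo-Anosov property (no invariant isotopy class of essential simple closed curves); this is perfectly valid here since that property depends only on the mapping class, but note that \cite{c1} in fact proves weak monotonicity for \emph{any} surface symplectomorphism with nondegenerate fixed points, which is the form in which the hypothesis is actually stated. Second, the bijection in your last paragraph between Nielsen classes of $\bar\phi$ and points of $\Fix(\psi)$ needs a convention at fixed boundary components: a boundary circle fixed pointwise by $\psi$ contributes a continuum to $\Fix(\psi)$, so one must either collapse boundaries to prongs (as in \cite{c1}) or read $\sum_{x\in\Fix(\psi)}|\Ind(x)|$ as a sum over isolated fixed points and fixed boundary circles, each with its Nielsen index. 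In the present paper this is harmless, since all subsequent uses (Lemma~\ref{lem:pa}, Theorem~\ref{th:pa1}) are for closed surfaces.
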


\subsubsection{Reducible mapping classes}

Recently, A. Cotton-Clay  \cite{c1} calculated
 Seidel's symplectic Floer homology for reducible mapping classes.
This result completing all previous computations.

In the  case of reducible mapping classes  a energy estimate forbids holomorphic discs from
crossing reducing curves except when a pseudo-Anosov component
meets an identity component ( with no twisting).
Let us introduce  some notation following \cite{c1}.
Recall the notation of $M_{id}$ for the collection of fixed components
as well as the tree types of boundary:  1) $\p_{+}M_{id}$, 
$\p_{-}M_{id}$ denote the collection of  components of $\p M_{id}$
on which we've joined up with a positive(resp. negative)  twist;
2) the collection of components of $\p M_{\id}$ which meet a pseudo-Anosov component will be denoted $\p_pM_{\id}$.
Additionally let $M_1$ be the collection of periodic components and let $M_2$ be the collection of pseudo-Anosov components with punctures( i.e. before any perturbation) instead of boundary components wherever there is a boundary component that meets a fixed component.
We further subdivide $M_{\id}$. Let $M_a$ be the collection of fixed components
which don't meet any pseudo-Anosov components. Let $M_{b,p}$ be the collection of fixed components which meet one pseudo-Anosov component at a boundary with
$p$ prongs. In this case, we assign the boundary components to
$\p_{+}M_{\id}$ (this is an arbitrary choice). Let $M^o_{b,p}$ be
the collection of the $M_{b,p}$ with each component punctured once.
Let $M_{c,q}$ be the collection of fixed components which meets at least
two pseudo-Anosov components such that the total number of prongs over all
the boundaries is $q$. In this case, we assign at least one boundary
component to $\p_{+}M_{\id}$ and at least one to $\p_{-}M_{\id}$
(and beyond that, it does not matter).

\begin{theorem} \label{th:red} \cite{c1}
If  $\bar{\phi}$ is a perturbed standard form map $\phi$ in a reducible
mapping class $g$ with choices of the signs of components of $\p_{p} M_{\id}$.Then  $\bar{\phi}$ is weakly monotone, $\HF(\bar{\phi})$ is well-defined and 
$$
\dim \HF(g)=\dim \HF(\bar{\phi})= \dim H_{*}(M_a,\p_{+}{M_{\id}};\Z_2) +
$$
$$
+ \sum_{p}(\dim H_{*}(M_{b,p}^0,\p_{+}M_{b,p};\Z_2) +(p-1)|\pi_{0}(M_{b,p})|) +
$$
$$
+ \sum_{q}(\dim H_{*}(M_{c,q},\p_{+}M_{c,q};\Z_2) +q|\pi_{0}(M_{c,q)})|) +
$$
$$
+L(\bar{\phi}|M_1)+ \dim HF_{*}(\bar{\phi}|M_2),
$$
where $L(\bar{\phi}|M_1)$ is the Lefschetz number of  $\bar{\phi}|M_1$,
the $L(\bar{\phi}|M_1)$ summand is all in even degree, the other
two  summands(with $p-1$ and $q$  are all in odd degree, and $HF_{*}(\bar{\phi}|M_2)$
denotes the Floer homology  for $\bar{\phi}$ on the pseudo-Anosov  components $M_2$

\end{theorem}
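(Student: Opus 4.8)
The plan is to follow A.~Cotton-Clay's strategy, which combines a barrier/energy argument with the piecewise computations already recorded in this section. First I would fix a $\bar\phi$-invariant area form $\omega$ which is the standard form $\de q\wedge\de p$ on the annular region $U$, is monotone on each periodic piece (Lemmas \ref{lemma:monotone3} and \ref{lemma:monotone4}), and matches Cotton-Clay's smoothing model near each fixed singularity and boundary prong of the pseudo-Anosov pieces. Weak monotonicity of $\bar\phi$ is then checked directly from the twist-map normal form of $\bar\phi^\ell$ on $U$: the tori $T$ generating $T(T_{\bar\phi})$ either lie over a periodic or identity piece --- where monotonicity already forces $[\omega_{\bar\phi}]$ to vanish on $\ker(c_{\bar\phi}|_{T})$ --- or are the reducing tori built from the curves $\gamma_i$, for which $c_{\bar\phi}$ and $[\omega_{\bar\phi}]$ are computed explicitly in the $(q,p)$-coordinates. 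By the invariance of $HF_*$ among weakly monotone representatives this also gives $HF_*(g)=HF_*(\bar\phi)$.

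The technical heart is a \emph{localization} statement: after a $C^{0}$-small Hamiltonian perturbation, every solution $u$ of \eqref{eq:corbit} contributing to the differential has image in a single component of $M\setminus U$ (or in a single twisting annulus), \emph{with the sole exception} of curves running between an identity piece and a pseudo-Anosov piece along a no-twist reducing curve. The mechanism is an energy estimate: a trajectory crossing a reducing curve sweeps, by the asymptotic convergence to fixed points and the normal form on $U$, an area bounded below by a definite constant, whereas weak monotonicity pins the energy of index-$1$ trajectories to a value that can be made smaller by shrinking the perturbation and the width of $U$ (this uses the boundedness of energy from Seidel's and Cotton-Clay's work quoted above). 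Hence $\CF(\bar\phi)$ splits as a direct sum of subcomplexes indexed by the pieces $M_a$, $M_1$, $M_2$, together with ``mixed'' subcomplexes attached to the curves of $\p_p M_{\id}$.

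It remains to evaluate each summand. On the pseudo-Anosov pieces $M_2$ (taken with a puncture wherever a boundary meets a fixed component) the subcomplex computes $HF_*(\bar\phi|M_2)$ by Theorem \ref{th:pa}, since that computation is local to the smoothed singularities and prongs. On the periodic pieces $M_1$, Theorem \ref{thm:main} (equivalently Theorem \ref{th:ga}) applies: every fixed point is nondegenerate of index $+1$, so the differential vanishes for index reasons and the contribution is $L(\bar\phi|M_1)$, all in even degree. On a fixed component of $M_a$ the standard form map is the Hamiltonian flow of a Morse function with boundary maxima/minima dictated by the adjacent twist signs, so its subcomplex is a Morse complex computing $H_*(M_a,\p_+ M_{\id};\Z_2)$, again even. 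Finally, for a fixed component meeting pseudo-Anosov pieces, the smoothing of a $p$-pronged boundary contributes $p-1$ (respectively $p$) generators of index $-1$ which, by an index/energy check on the model, have no differential to or from them; this yields the summands $(p-1)|\pi_0(M_{b,p})|$ and $q|\pi_0(M_{c,q})|$ in odd degree, while the remaining Morse part of $M_{b,p}$, now punctured once per such boundary, computes $H_*(M^o_{b,p},\p_+ M_{b,p};\Z_2)$, and similarly for $M_{c,q}$. Adding up the dimensions gives the stated formula.

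The step I expect to be the main obstacle is the localization estimate in the mixed case: one must identify exactly which trajectories are permitted to cross a no-twist curve joining an identity piece to a pseudo-Anosov piece, and verify that puncturing $M_{b,p}$ and $M_{c,q}$ once per such boundary accounts for precisely those trajectories --- i.e., that the mixed subcomplex contributes the homology of the punctured identity piece plus the prong generators and nothing more. This is where Cotton-Clay's explicit smoothing model and a careful maximum-principle argument along the reducing curves are essential.
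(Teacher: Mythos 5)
The paper itself does not prove Theorem \ref{th:red}; it is quoted verbatim from Cotton-Clay \cite{c1}, with the subsequent remark merely indicating informally which earlier results each summand resembles. So there is no in-paper proof to compare against. That said, your reconstruction tracks the actual argument in \cite{c1} quite faithfully: the choice of a $\bar\phi$-invariant area form adapted to the twist regions, the verification of weak monotonicity via the reducing tori in $T(T_{\bar\phi})$, the energy/barrier estimate forbidding Floer trajectories from crossing reducing curves except across a no-twist curve between an identity piece and a pseudo-Anosov piece, the resulting splitting of $\CF(\bar\phi)$ into subcomplexes localized on pieces, and the evaluation of each piece via Theorems \ref{thm:main}, \ref{th:ga}, and \ref{th:pa}. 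You also correctly identify the genuine difficulty --- controlling the mixed identity/pseudo-Anosov subcomplex and matching it against the punctured-surface homology plus the prong generators.

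Two small precision issues worth flagging. First, in your last paragraph you say that the Morse part of $M_{b,p}$ is punctured once per pseudo-Anosov boundary ``and similarly for $M_{c,q}$''; but in the statement (and in \cite{c1}) the $M_{c,q}$ summand uses $H_*(M_{c,q},\p_+M_{c,q};\Z_2)$ with the \emph{unpunctured} components, the bookkeeping being absorbed into the requirement that both $\p_+M_{c,q}$ and $\p_-M_{c,q}$ be nonempty and into the $q|\pi_0(M_{c,q})|$ odd-degree generators. Second, the assertion that $[\omega_{\bar\phi}]$ ``already'' vanishes on $\ker(c_{\bar\phi}|_{T})$ over periodic and identity pieces because of monotonicity there is a bit quick --- weak monotonicity is a condition on the full mapping torus $T_{\bar\phi}$, and one has to compute $[\omega_{\bar\phi}]$ and $c_{\bar\phi}$ on each torus class of $T(T_{\bar\phi})$, including those coming from boundary circles of the annuli, using the explicit $(q,p)$-normal form; this is a calculation rather than an automatic consequence. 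Neither issue is a gap in the overall strategy, only in the wording.
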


\begin{rk}
The first summand and the $L(\bar{\phi}|M_1)$ are as in R. Gautschi's 
Theorem \ref{th:ga} \cite{G}. The last summand comes from the  pseudo-Anosov components
and is calculated via  the  Theorem \ref{th:pa}. The sums  over $p$ and $q$ arise
in the same manner as the first summand.  
\end{rk}

\begin{cor}
As an application, A. Cotton-Clay gave recently \cite{c2} a sharp lower
bound on the number of fixed points of  area-preserving map  in any prescribed mapping class(rel boundary), generalising
the Poincare-Birkhoff fixed point theorem.
\end{cor}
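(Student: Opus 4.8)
The plan is to bootstrap the Floer-homological computations above (Theorems~\ref{th:ga}, \ref{th:pa}, \ref{th:red}) into a fixed-point bound and then produce explicit maps realizing it. Set $B(g):=\dim HF_*(g)$. This number refines over Nielsen classes: holomorphic strips join fixed points lying in the same Nielsen class, so the Floer complex and its homology split as $HF_*(g)=\bigoplus_F HF_*(g;F)$ over the finitely many Nielsen classes $F$ of $g$; moreover, by the theorems above, $B(g)$ is completely explicit in terms of the Thurston decomposition of $g$ --- the relative Betti numbers $\dim H_*(M_a,\partial_+ M_a;\Z_2)$ together with the prong terms $p-1$ and $q$ coming from the identity pieces, the Lefschetz number of the periodic pieces, and $\sum_{x\in\Fix\psi}|\Ind(x)|$ coming from the singular pseudo-Anosov pieces. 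The assertion is that any area-preserving $\phi$ in class $g$ with no fixed points on $\partial M$ satisfies $\#\Fix(\phi)\ge B(g)$, with equality for a suitable $\phi$.

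For the lower bound, assume first that $\Fix(\phi)$ is finite with all fixed points nondegenerate. After passing to a $\phi$-invariant area form cohomologous to the given one and invoking the monotonicity and weak-monotonicity results of Seidel, Gautschi and Cotton-Clay recalled above, $HF_*(\phi)$ is defined and canonically isomorphic to $HF_*(g)$. The chain complex $CF_*(\phi)$ has exactly one generator per fixed point and respects the Nielsen splitting, so the number of fixed points in each class $F$ is at least $\dim HF_*(g;F)$ (the homology has dimension at most the number of generators) and, being a finite set whose signed count equals $L(F):=\ind(F,\phi)$, at least $|L(F)|$; summing over $F$ gives $\#\Fix(\phi)\ge B(g)$. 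For sharpness, the standard form representatives of $g$ (Definition~\ref{reduciblestandard} and the surrounding discussion) are themselves area-preserving, have only nondegenerate fixed points, and realize the bound term by term: an identity piece $M_a$ carries a perfect Morse function and contributes $\dim H_*(M_a,\partial_+ M_a;\Z_2)$ critical points; each periodic or smooth pseudo-Anosov fixed point contributes one point of index $\pm1$; each fixed singularity or fixed boundary component contributes exactly $p-1$ or $p$ nondegenerate points after symplectic smoothing. Hence $B(g)$ is attained and the bound is sharp.

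The genuinely delicate point, and the one I expect to be the main obstacle, is the case of degenerate fixed points: here area-preservation rather than the homotopy type of $\phi$ does the work, since a general continuous map in class $g$ can collapse an index $-(p-1)$ Nielsen class to a single fixed point, so Nielsen theory alone yields only $\#\Fix(\phi)\ge N(g)$. The plan is to pass to local Floer homology: each isolated fixed point $x$ of an area-preserving $\phi$ carries a well-defined $HF^{\mathrm{loc}}_*(\phi,x)$; there is a spectral sequence $\bigoplus_x HF^{\mathrm{loc}}_*(\phi,x)\Rightarrow HF_*(g)$ compatible with the Nielsen splitting; and a Hamiltonian perturbation supported in a small disk about $x$ --- which keeps $\phi$ area-preserving and in the same weak-monotonicity class --- resolves $x$ into at least $\dim HF^{\mathrm{loc}}_*(\phi,x)$ nondegenerate fixed points. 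Chaining these facts with the nondegenerate case covers any $\phi$ with isolated fixed points (the non-isolated case being trivial), and, after keeping track of the quantum $H^*(M;\Z_2)$-module structure, the resulting inequality specializes, for $M$ an annulus and $g$ a Dehn twist, to the classical Poincare-Birkhoff bound $\#\Fix(\phi)\ge 2$. The hard work is in the local statements --- computing $HF^{\mathrm{loc}}_*$ at the degenerate models that occur, above all the smoothed prong singularities, and verifying that no area-preserving perturbation can evade them --- which is a local Floer homology / Conley--Zehnder computation on the disk, and is precisely where the symplectic hypothesis is indispensable and where \cite{c2} supplies the new input.
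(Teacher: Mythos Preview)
The paper gives no proof of this corollary: it is stated purely as a citation to Cotton-Clay's separate work \cite{c2}, recording an application of the preceding Floer-homological computations rather than deriving the bound here. Your proposal therefore goes well beyond what the paper itself does, and is a reasonable outline of the strategy one finds in \cite{c2}.

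One gap in your sketch deserves flagging. In the nondegenerate step you assert that, ``invoking the monotonicity and weak-monotonicity results of Seidel, Gautschi and Cotton-Clay recalled above, $HF_*(\phi)$ is defined and canonically isomorphic to $HF_*(g)$'' for an \emph{arbitrary} area-preserving $\phi$ in class $g$. But the results recalled in this paper only guarantee that each mapping class \emph{contains} a weakly monotone representative; they do not assert that every area-preserving representative is weakly monotone, and in general it is not. An arbitrary symplectomorphism $\phi$ may fail the condition that $[\omega_\phi]$ vanish on $\ker(c_\phi|_{T(T_\phi)})$, so $HF_*(\phi)$ over $\Z_2$ as set up here need not exist, and the chain-level inequality $\#\Fix(\phi)\ge\dim CF_*(\phi)\ge\dim HF_*(g)$ is not immediately available. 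Your phrase ``passing to a $\phi$-invariant area form cohomologous to the given one'' does not resolve this: the obstruction lives in $H^2(T_\phi;\R)$ and depends on $\phi$, not merely on the cohomology class of $\omega$ on $M$. Bridging this --- whether by working Nielsen-class by Nielsen-class, by Novikov coefficients with a rank comparison, or by a direct argument --- is part of what \cite{c2} supplies. You correctly identify the degenerate case as the delicate one, but the nondegenerate case already needs more than what the present paper provides.
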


\section{The growth rate of symplectic Floer homology }

\subsection{Topological entropy and Nielsen numbers}

The most widely used measure for the complexity of a dynamical system is the topological
entropy. For the convenience of the reader, we include its definition.
 Let $ f: X \rightarrow X $ be a self-map of a compact metric space. For given $\epsilon > 0 $
 and $ n \in \N $, a subset $E \subset X$ is said to be $(n,\epsilon)$-separated under $f$ if for
 each pair $x \not= y$ in $E$ there is $0 \leq i <n $ such that $ d(f^i(x), f^i(y)) > \epsilon$.
 Let $s_n(\epsilon,f)$  denote the largest cardinality of any $(n,\epsilon)$-separated subset $E$
 under $f$. Thus  $s_n(\epsilon,f)$ is the greatest number of orbit segments ${x,f(x),...,f^{n-1}(x)}$
 of length $n$ that can be distinguished one from another provided we can only distinguish 
 between points of $X$ that are  at least $\epsilon$ apart. Now let
 $$
 h(f,\epsilon):= \limsup_{n} \frac{1}{n}\cdot\log \,s_n(\epsilon,f)
 $$
 $$
 h(f):=\limsup_{\epsilon \rightarrow 0} h(f,\epsilon).
 $$
 The number $0\leq h(f) \leq \infty $, which to be independent of the metric $d$ used, is called the topological entropy of $f$.
 If $ h(f,\epsilon)> 0$ then, up to resolution $ \epsilon >0$, the number $s_n(\epsilon,f)$ of 
 distinguishable orbit segments of length $n$ grows exponentially with $n$. So $h(f)$
 measures the growth rate in $n$ of the number of orbit segments of length $n$
 with arbitrarily fine resolution.

   A basic relation between  topological entropy $h(f)$ and Nielsen numbers  was found by N. Ivanov
   \cite{I}. We present here a very short proof by Boju  Jiang  of the Ivanov's  inequality.
\begin{lemma}\label{lemma:ent}\cite{I}
$$
h(f) \geq \limsup_{n} \frac{1}{n}\cdot\log N(f^n)
 $$
\end{lemma}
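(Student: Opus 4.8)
The plan is to realize the right-hand side as a lower bound for $h(f)$ by producing, for a single $\epsilon>0$ chosen once and for all independently of $n$, an $(n,\epsilon)$-separated subset of $X$ of cardinality at least $N(f^n)$. This gives $s_n(\epsilon,f)\ge N(f^n)$ for every $n$, hence $h(f,\epsilon)=\limsup_n\frac1n\log s_n(\epsilon,f)\ge\limsup_n\frac1n\log N(f^n)$; and since $h(f,\epsilon)$ is nonincreasing in $\epsilon$, we get $h(f)=\lim_{\epsilon\to0}h(f,\epsilon)\ge\limsup_n\frac1n\log N(f^n)$, which is the claim. Each $N(f^n)$ is finite, so the right-hand side is well defined, with the convention $\log 0=-\infty$.

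The separated set will consist of one fixed point of $f^n$ chosen from each of the $N(f^n)$ essential fixed point classes of $f^n$. The real content is then the following uniform separation statement: \emph{there is $\epsilon>0$, depending only on $f$ and a fixed metric $d$ on the compact polyhedron $X$, such that for every $n$ and all $x,y\in\Fix(f^n)$, if $d(f^ix,f^iy)\le\epsilon$ for $0\le i<n$ then $x$ and $y$ lie in the same fixed point class of $f^n$.} Granting this, representatives of two distinct fixed point classes of $f^n$ are automatically $(n,\epsilon)$-separated, and the lower bound for $s_n(\epsilon,f)$ follows.

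To prove the uniform statement I would use that a compact polyhedron is uniformly locally contractible. Fix first a \emph{homotopy scale} $\epsilon_1>0$ such that any two paths of diameter $<\epsilon_1$ sharing both endpoints are homotopic rel endpoints --- for instance half the Lebesgue number of the contractible open-star cover of a triangulation, so that the union of two such paths lies in a single open star. Then, using the uniform continuity of $f$ together with a refinement of the triangulation, choose a smaller \emph{path scale} $\epsilon\in(0,\epsilon_1)$ so that any two points within $\epsilon$ are joined by a path of diameter $\le\epsilon$ and the $f$-image of any set of diameter $\le\epsilon$ has diameter $<\epsilon_1$; all these constants depend only on $f$ and $d$, not on $n$. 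Now take $x,y\in\Fix(f^n)$ with $d(f^ix,f^iy)\le\epsilon$ for $0\le i<n$ --- hence for all $i\ge0$, by periodicity --- and choose paths $c_i$ from $f^ix$ to $f^iy$ of diameter $\le\epsilon$ with $c_n=c_0$. Then $f\circ c_i$ and $c_{i+1}$ both run from $f^{i+1}x$ to $f^{i+1}y$ and are small on the homotopy scale, so $f\circ c_i\simeq c_{i+1}$ rel endpoints; applying $f^{\,n-1-i}$ to this homotopy and concatenating over $i=0,\dots,n-1$ yields $f^n\circ c_0\simeq c_n=c_0$ rel endpoints. By the path criterion for fixed point classes recalled in Section~2, applied to $f^n$, the points $x$ and $y$ lie in the same fixed point class of $f^n$, proving the uniform statement.

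The main obstacle is exactly this uniformity: the separation constant $\epsilon$ must serve for all iterates $f^n$ at once, which forces one to keep the \emph{homotopy scale} intrinsic to $X$ separate from the \emph{path scale} that $f$ is allowed to distort. This is possible only because $X$ is a finite polyhedron and $f$ is uniformly continuous; a constant degenerating with $n$ would leave the cardinalities $s_n(\epsilon,f)$ uncomparable across $n$ and destroy the exponential bound. The remaining steps --- reducing to a count of essential fixed point classes, and concatenating the homotopies --- are routine.
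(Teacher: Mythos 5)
Your proposal is correct and follows essentially the same argument as the paper: choose one representative from each essential fixed point class of $f^n$, fix two scales (a homotopy scale governing when nearby paths are homotopic rel endpoints, and a smaller separation scale controlled by uniform continuity of $f$), connect orbit points by short paths, and use $f\circ c_i\simeq c_{i+1}$ to conclude that two points failing to be $(n,\epsilon)$-separated lie in the same Nielsen class. The paper's proof (due to Jiang, reproducing Ivanov's inequality) uses $\delta$ for your $\epsilon_1$ and phrases the contractibility hypothesis in terms of loops of diameter $<2\delta$ rather than pairs of short paths, but this is the same mechanism; your write-up merely makes the uniformity-in-$n$ point explicit.
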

\begin{proof}
 Let $\delta$ be such that every loop in $X$ of diameter $ < 2\delta $ is contractible.
 Let $\epsilon >0$ be a smaller number such that $d(f(x),f(y)) < \delta $ whenever $ d(x,y)<2\epsilon $. Let $E_n \subset X $ be a set consisting of one point from each essential fixed point class of $f^n$. Thus $ \mid E_n \mid =N(f^n) $. By the definition of $h(f)$, it suffices
 to show that $E_n$ is $(n,\epsilon)$-separated.
 Suppose it is not so. Then there would be two points $x\not=y \in E_n$ such that $ d(f^i(x), f^i(y)) \leq \epsilon$ for $o\leq i< n$ hence for all $i\geq 0$. Pick a path $c_i$ from $f^i(x)$ to
 $f^i(y)$ of diameter $< 2\epsilon$ for $ o\leq i< n$ and let $c_n=c_0$. By the choice of $\delta$
 and $\epsilon$ ,  $f\circ c_i \simeq c_{i+1} $ for all $i$, so $f^n\circ c_0\simeq c_n=c_0$. 
 This means $x,y$ in the same fixed point class of $f^n$, contradicting the construction of $E_n$.

\end{proof}

 This inequality is remarkable in that it does not require smoothness of the map and provides a common lower bound for the topological entropy of all maps in a homotopy class.

\subsection{Asymptotic invariant}

Let $\Gamma = \pi_0(Diff^+(M))$ be the mapping class group of a closed connected oriented surface $M$ of genus $\geq 2$. Pick an everywhere positive two-form $\omega$ on $M$. A isotopy  theorem of Moser \cite{Mo} says that each  mapping class of  $g \in \Gamma$,  i.e. an isotopy class of $Diff^+(M)$,  admits representatives which preserve $\omega$. Due  to Seidel\cite{S} and Cotton-Clay \cite{c1} we can
pick  a monotone(weakly monotone)  representative $\phi\in\symp^m(M,\omega)$( or $\phi\in \symp^{wm}(M,\omega)$) of $g$ such that $\HF(\phi)$ is an invariant as $\phi$ is deformed through monotone(weakly monotone) symplectomorphisms.
These imply that we have a symplectic Floer homology invariant $\HF(g)$ canonically assigned to each
mapping class  $g$ given by $\HF(\phi)$ for any monotone(weakly monotone) symplectomorphism $\phi$.

Note that $\HF(g)$ is independent of the choice of an area
form $\omega$ by Moser's  theorem  and naturality of Floer homology. 
 
Taking a dynamical point of view,
 we consider now  the iterates of monotone(weakly monotone)  symplectomorphism  $\phi$.
Symplectomorphisms  $\phi^n$
are  also monotone(weakly monotone) for all $n>0$ \cite{G, c1}.

The growth rate of a sequence $a_n$ of complex numbers is defined
by
 $$
 \grow ( a_n):= max \{1,  \limsup_{n \rightarrow \infty} |a_n|^{1/n}\}
$$

which could be infinity. Note that $\grow(a_n) \geq 1$ even if all $a_n =0$.
When  $\grow(a_n) > 1$, we say that the sequence $a_n$ grows exponentially.

In \cite{f}  we  have introduced  the asymptotic invariant $ F^{\infty}(g)$ assigned  to  mapping class   $g \in Mod_M = \pi_0(Diff^+(M))$  via  the growth rate of the sequence
$\{a_n=\dim\HF(\phi^n)\}$ for  a monotone( or weakly monotone)  representative $\phi\in\symp^m(M,\omega)$ of $g$:
$$ F^{\infty}(g):=\grow(\dim\HF(\phi^n)) $$

\begin{ex} \label{ex:1}
If $\phi$ is  a non-trivial orientation
preserving standard form periodic diffeomorphism of a compact connected surface $M$
of Euler characteristic $\chi(M) <  0$ , then the  periodicity of the
sequence $\dim\HF(\phi^n)$ implies that  for the  corresponding  mapping class $g$ the  asymptotic invariant
$$ F^{\infty}(g):=\grow(\dim\HF(\phi^n))=1 $$

\end{ex}

\begin{ex}\label{ex:2} Let $\phi$ be  a monotone  diffeomorphism of finite type of a compact connected surface $M$
of Euler characteristic $\chi(M) <  0$ and $g$ a corresponding algebraically finite mapping class.
Then the total dimension of $\HF(\phi^n)$  grows at most linearly (see \cite{c1, Sm, ff}. 
Taking the growth rate in $n$, we get
that  the asymptotic invariant  $$ F^{\infty}(g):=\grow(\dim\HF(\phi^n))=1$$.
\end{ex}

 For any set $S$ let $\Z S$ denote the free Abelian group with the specified basis $S$.The norm in $\Z S$ is defined by
\begin{equation}
\|\sum_i k_is_i\|:= \sum_i \mid k_i\mid \in \Z,
\end{equation}
when the $s_i$ in $S$ are all different.
 
For a $\Z H $-matrix $ A=(a_{ij}) $,define its norm by $ \| A \|:=\sum_{i,j}\| a_{ij} \| $.Then we have inequalities $\| AB \|\leq \| A\ |\cdot\| B \|$ when $A,B$ can be multiplied, and $\| \tr A \|\leq \| A \| $ when $A$ is a square matrix.For a matrix  $ A=(a_{ij}) $ in $\Z S$, its matrix of norms is defined to be the matrix $ A^{norm}:=(\| a_{ij} \|)$ which is a matrix of non-negative integers.In what follows, the set $S$ wiill be $\pi $, $H $ or
$H _c$.We denote by $s(A)$ the spectral radius of $A$, $s(A)=\lim_n \sqrt[n]{\| A^n \| |}$ which coincide with the largest  modul of an eigenvalue of $A$.
 
\begin{rk} The norm $\| L_{H}(f^n) \| $ is the sum of absolute values of the indices of all the $n$-orbits classes $O^n$ . It equals $\| L_{\pi}(f^n) \|$, the sum of absolute values of the indices of all the fixed point classes of $f^n$, because any two fixed point classes of $f^n$ contained in the same $n$-orbit class $O^n$ must have the same index. The norm $\| L_{\pi}(f^n) \|$ is  homotopy type invariant.   

We define the asymptotic absolute Lefschetz number \cite{j} to be the growth rate
 $$
L^{\infty}(f)=  \grow(\| L_{\pi}(f^n) \|)
$$
We also define the asymptotic Nielsen  number\cite{I} to be the growth rate
 $$
N^{\infty}(f)=  \grow(N(f^n))
$$
All these asymptotic numbers are homotopy type invariants.

\end{rk}

\begin{lemma}\label{lem:pa}
If $\phi$ is  any symplectomorphism with nondegenerate fixed points in
given pseudo-Anosov mapping class $g $,  then
$$ \dim\HF(\phi)=\dim\HF(g)=\|L_{\pi}(\psi) \|,
$$
where $\psi$ is a singular canonical  pseudo-Anosov representative of $g$. 

\end{lemma}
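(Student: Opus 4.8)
The plan is to establish the two equalities separately and then glue them through Theorem~\ref{th:pa}. First I would invoke Theorem~\ref{th:pa}: since $\phi$ has nondegenerate fixed points and lies in the pseudo-Anosov class $g$, it is weakly monotone, $\HF(\phi)$ is defined, and
$$\dim\HF(\phi)=\dim\HF(g)=\sum_{x\in\Fix(\psi)}|\Ind(x)|,$$
where $\psi$ is the singular canonical pseudo-Anosov representative of $g$. Thus the whole task reduces to the single identity $\sum_{x\in\Fix(\psi)}|\Ind(x)|=\|L_\pi(\psi)\|$.

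For this I would unwind the definitions. By definition $L_\pi(\psi)=\sum_F\ind(F,\psi)\cdot cd_\pi(F,\psi)\in\Z\pi_f$, the sum running over the essential fixed point classes $F$ of $\psi$; by the Reidemeister bijection distinct fixed point classes of $\psi$ carry distinct $\tilde\psi_*$-conjugacy coordinates $cd_\pi(F,\psi)\in\pi_f$, so the basis elements occurring in $L_\pi(\psi)$ are pairwise distinct and $\|L_\pi(\psi)\|=\sum_F|\ind(F,\psi)|$ (this is also the content of the Remark above, specialized to $n=1$). It then remains to match the essential fixed point classes of $\psi$ with the individual fixed points of $\psi$. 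This is exactly the feature of the canonical pseudo-Anosov representative recalled just before Theorem~\ref{th:pa}, and classically a consequence of the transverse contracting/expanding measured foliations (cf.\ \cite{flp,Th}): the fixed points of $\psi$ — the smooth ones together with the fixed singularities — lie in pairwise distinct Nielsen classes, and each such class is essential, the local fixed point index being $\pm1$ at a smooth fixed point and $1-p$ or $+1$ at a fixed $p$-pronged singularity (according to the rotation number of its prongs), in particular never $0$. Hence the essential fixed point classes of $\psi$ are precisely the singletons $\{x\}$, $x\in\Fix(\psi)$, with $\ind(\{x\},\psi)=\Ind(x)$.

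Putting the two steps together yields $\|L_\pi(\psi)\|=\sum_F|\ind(F,\psi)|=\sum_{x\in\Fix(\psi)}|\Ind(x)|=\dim\HF(\phi)=\dim\HF(g)$, which is the claim. I expect the only non-formal ingredient to be the assertion that distinct fixed points of a pseudo-Anosov map are never Nielsen equivalent and that all of their indices are nonzero; once this is granted — and it is already built into the standard-form discussion preceding Theorem~\ref{th:pa}, hence into Theorem~\ref{th:pa} itself — everything else is a direct comparison of the definition of the Reidemeister-trace norm with Cotton-Clay's count $\sum_{x\in\Fix(\psi)}|\Ind(x)|$.
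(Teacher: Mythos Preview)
Your proof is correct and follows essentially the same line as the paper's: invoke Theorem~\ref{th:pa} to reduce to $\sum_{x\in\Fix(\psi)}|\Ind(x)|=\|L_\pi(\psi)\|$, then use that the fixed points of a pseudo-Anosov map are Nielsen-separated with nonzero index, so the Reidemeister trace is a sum over singletons and its norm is exactly the sum of absolute indices. Your write-up is in fact a bit more careful than the paper's, since you spell out explicitly why all fixed point classes are essential (nonzero local index) and why distinct classes give distinct basis elements in $\Z\pi_f$.
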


{\it Proof.} It is known that for pseudo-Anosov map $\psi$ fixed points are topologically separated,
 i.e.  each essential  fixed point class of $\psi$ consists of a single fixed point(see  \cite{Th,I,FelshB}).
 Then  the generalized Lefschetz number  or the Reidemeister trace \cite{j} is 
\begin{equation} 
L_{\pi}(\psi):=\sum_{F}\ind (F,\psi)\cdot cd_{\pi}(F,\psi) =\sum_{x\in\Fix(\psi)}\ind (x)\cdot cd_{\pi}(x,\psi)\in  \Z\pi_\psi , 
\end{equation} 
where the summation being over all essential fixed point classes $F$ of $\psi$ i.e over all fixed points of $\psi$.
So,  the result follows from the theorem \ref{th:pa} and  the definition of the norm $\|L_{\pi}(\psi) \|$.

\begin{rk}
Lemma \ref{lem:pa} provides via Reidemeister trace formula  a  new combinatorial formula to compute  $\dim\HF(g)$ comparable to the  train-track combinatorial formula
of A. Cotton-Clay in \cite{c1}.

\begin{theorem}\label{th:I}\cite{flp,I,j} Let $f$ be a pseudo-Anosov  homeomorphism with dilatation  $\lambda >1$ of surfase $M$
of genus $\geq 2$. Then
$$h(f)=log(\lambda)=\log N^{\infty}(f)=\log L^{\infty}(f) $$
\end{theorem}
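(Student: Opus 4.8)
The plan is to deduce the three equalities from the Nielsen--Thurston structure of a pseudo-Anosov map together with the entropy inequality of Lemma \ref{lemma:ent}, organized as: (i) the classical value $h(f)=\log\lambda$ and a lower bound on the growth of periodic points; (ii) the identification $L^{\infty}(f)=N^{\infty}(f)$; (iii) squeezing this common growth rate between the lower bound $\lambda$ from (i) and the upper bound coming from Ivanov's inequality.

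\emph{Step 1.} First I would recall the theorem of Fathi--Shub (see \cite{flp}) that $h(f)=\log\lambda$. Its proof endows $f$ with a Markov partition whose transition matrix $A$ is primitive with Perron--Frobenius eigenvalue $\lambda$, presents $f$ --- away from the finitely many periodic orbits lying on the boundary of the partition --- as the subshift of finite type $\Sigma_A$, and computes $h(\Sigma_A)=\log\lambda$. The same picture supplies the periodic-point count I will need: the fixed points of $f^n$, outside the partition boundary and the finitely many fixed singularities, correspond bijectively to length-$n$ cycles of the graph of $A$, so $\#\Fix(f^n)\ge\tr(A^n)-O(1)\ge\lambda^{n}-O(1)$ by Perron--Frobenius, whence $\grow(\#\Fix(f^n))\ge\lambda$.

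\emph{Step 2.} For $L^{\infty}(f)=N^{\infty}(f)$ I would use that $f^n$ is again pseudo-Anosov (every iterate of a canonical representative is canonical), so its fixed points are topologically separated --- each lies in its own fixed point class --- and each has nonzero index: $-1$ at a regular point, $+1$ at a fixed singularity whose prongs rotate, and $1-p$ at a non-rotating fixed $p$-pronged singularity (see \cite{Th,I,FelshB}, this being the computation underlying Lemma \ref{lem:pa}). Hence every fixed point class of $f^n$ is essential and $N(f^n)=\#\Fix(f^n)$. Since the invariant foliations, and so the finite set of singularities with their prong numbers, are the same for all iterates, there is a constant $C$ with $1\le|\ind(x,f^n)|\le C$ for all $x\in\Fix(f^n)$, and therefore
\[
\#\Fix(f^n)=N(f^n)\le\|L_{\pi}(f^n)\|=\sum_{x\in\Fix(f^n)}|\ind(x,f^n)|\le C\,\#\Fix(f^n),
\]
the middle equality being the identity from the proof of Lemma \ref{lem:pa} applied to the pseudo-Anosov map $f^n$. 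Taking growth rates in $n$ erases the constant $C$, so $N^{\infty}(f)=\grow(\#\Fix(f^n))=L^{\infty}(f)$.

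\emph{Step 3.} Finally, Lemma \ref{lemma:ent} together with Step 1 gives $\log N^{\infty}(f)\le h(f)=\log\lambda$, while Step 1 also gives $\log N^{\infty}(f)=\log\grow(\#\Fix(f^n))\ge\log\lambda$; hence $h(f)=\log\lambda=\log N^{\infty}(f)=\log L^{\infty}(f)$, which is the assertion. I expect the one genuinely delicate point to be the bookkeeping in Step 1: one must check that replacing $f$ by the subshift $\Sigma_A$ discards only a per-period-bounded number of periodic points (those on singular leaves and those identified along the boundary of the Markov partition), so that neither the exponential growth rate $\lambda$ nor --- via the index estimate of Step 2 --- the chain of equalities is affected. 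This is exactly where minimality of the invariant foliations and finiteness of their singular set are used.
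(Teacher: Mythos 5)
The paper itself offers no proof of Theorem \ref{th:I}: it is stated with the citations \cite{flp,I,j} and treated as a known external input (the in-paper contributions are Lemma \ref{lem:pa} and Theorems \ref{th:pa1}, \ref{th:main}, which \emph{use} this theorem). So there is no in-paper argument to compare against; what you have done is reconstruct the standard proof that lives in those references, and your reconstruction is essentially correct. The squeeze you set up --- Ivanov's inequality (Lemma \ref{lemma:ent}) giving $\log N^{\infty}(f)\le h(f)$, the classical Fathi--Laudenbach--Po\'enaru computation $h(f)=\log\lambda$ via a primitive Markov transition matrix with Perron--Frobenius eigenvalue $\lambda$, and the periodic-point lower bound from the same Markov coding --- is exactly the mechanism used in \cite{I} and \cite{j}. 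Likewise, deducing $L^{\infty}(f)=N^{\infty}(f)$ from topological separation of fixed points of $f^n$ together with a uniform bound on $|\ind|$ coming from the fixed finite singular data is the argument underlying the paper's own Lemma \ref{lem:pa} and Remark; that part of your write-up matches the paper's point of view closely.

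Two small remarks, neither fatal. First, at a regular fixed point of a pseudo-Anosov the index is $\pm1$, not always $-1$: the sign depends on whether $\mathrm{d}f$ preserves or reverses the orientations of the two invariant directions (the paper's own description of standard form pA maps says ``index one or minus one''). This does not affect your chain of inequalities, since you only need $1\le|\ind|\le C$. Second, in Step 1 the inequality $\#\Fix(f^n)\ge\tr(A^n)-O(1)$ deserves the caution you already flag at the end: the semiconjugacy from $\Sigma_A$ to $M$ is finite-to-one and collapses only orbits lying on singular leaves and partition boundaries, of which there are boundedly many per period, so the exponential growth rate survives. You could avoid that bookkeeping entirely by citing directly (as the paper does) the result of \cite{flp}/\cite{I} that $N(f^n)$ grows like $\lambda^n$ for pseudo-Anosov $f$, rather than re-deriving it from the Markov partition; but as written the sketch is sound.
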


\begin{theorem}\label{th:j}\cite{j}
Suppose $f$ is canonical representative of a homeomorphism  of surfase $M$
of genus $\geq 2$ and  $\lambda$ is the largest  dilatation  of the  pseudo-Anosov components( $\lambda=1$ if there is no pseudo-Anosov components).
Then
$$h(f)=log(\lambda)=\log N^{\infty}(f)=\log L^{\infty}(f)$$
\end{theorem}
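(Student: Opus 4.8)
If the mapping class of $f$ is periodic all three sequences $h$, $N(f^n)$, $\|L_\pi(f^n)\|$ are bounded (periodic) so the claim reads $0=0$, and if it is pseudo-Anosov the claim is Theorem~\ref{th:I}; so assume $f$ is genuinely reducible. The plan is then to read off all three growth rates from the Thurston canonical decomposition, reducing the pseudo-Anosov pieces to Theorem~\ref{th:I}. By Theorem~\ref{thm:thur} there is an $f$-invariant tubular neighborhood $U$ of the reducing system such that $M\setminus\mathrm{int}(U)$ is a disjoint union of (possibly disconnected, cyclically permuted) subsurfaces $S_1,\dots,S_r$; writing $\ell_j$ for the period of $S_j$, the first return map $f^{\ell_j}|_{S_j}$ is periodic or pseudo-Anosov (in the latter case with dilatation $\mu_j>1$), and $\lambda=\max_j\mu_j^{1/\ell_j}$, with $\lambda=1$ if there is no pseudo-Anosov piece; on $U$ a suitable power of $f$ is a product of Dehn twists. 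Every reducing curve is essential, so each $\pi_1(S_j)\to\pi_1(M)$ is injective, and this is what lets the fixed point data be separated piece by piece. For the entropy I would use the standard fact that $h(f)$ is the maximum of the entropies of the restrictions of $f$ to the pieces of the decomposition: identity components and twist annuli contribute zero, a periodic piece contributes $0$, and a pseudo-Anosov piece contributes $\frac1{\ell_j}h(f^{\ell_j}|_{S_j})=\frac1{\ell_j}\log\mu_j$ by Theorem~\ref{th:I}; hence $h(f)=\max_j\frac1{\ell_j}\log\mu_j=\log\lambda$.

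For $N^\infty(f)$ and $L^\infty(f)$ I would show that the essential fixed point data of the iterates splits along the reducing curves. Concretely: $f$ is the identity on the identity components $M_{\id}$, which contribute a fixed, bounded collection of fixed point classes of $f^n$ (one per component, of index $\chi$ of that component); on a twist annulus of $U$ the iterates have only inessential fixed point classes (whole circles of fixed points, of index $0$); on a periodic piece $\|L_\pi(f^n|_{S_j})\|$ and $N(f^n|_{S_j})$ are periodic in $n$, hence bounded; and on a pseudo-Anosov piece both grow like $\mu_j^{n/\ell_j}$, by Theorem~\ref{th:I} together with Lemma~\ref{lem:pa}. Since the reducing curves are $\pi_1$-injective and $f$-invariant, a Nielsen path joining two fixed points inside a given piece can be homotoped into that piece, so essential classes coming from different pieces never coalesce and their (local) indices are computed in $M$ exactly as in the piece. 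Hence $\|L_\pi(f^n)\|=\sum_j\|L_\pi(f^n|_{S_j})\|+O(1)$, and likewise for $N(f^n)$, so on taking growth rates $N^\infty(f)=L^\infty(f)=\max_j\mu_j^{1/\ell_j}=\lambda$.

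One can organize the comparison more cheaply. Always $N(f^n)\le\|L_\pi(f^n)\|$, so $\log N^\infty(f)\le\log L^\infty(f)$, and Ivanov's inequality (Lemma~\ref{lemma:ent}) gives $h(f)\ge\log N^\infty(f)$. Hence it is enough to prove the lower bound $N^\infty(f)\ge\lambda$ — the essential classes living on the extremal pseudo-Anosov piece stay distinct and essential in $M$, so this follows from Theorem~\ref{th:I} — together with the upper bound $L^\infty(f)\le\lambda$; then $\log\lambda\le\log N^\infty(f)\le h(f)=\log\lambda$ and $\lambda=N^\infty(f)\le L^\infty(f)\le\lambda$ close everything. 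The upper bound $L^\infty(f)\le\lambda$ follows either from the splitting above or from the Reidemeister trace formula $L_H(f^n)=\sum_d(-1)^d[\Tr(z\tilde{F}_d)^n]$: taking norms gives $\|L_\pi(f^n)\|\le\sum_d\|(z\tilde{F}_d)^n\|$, so $\grow(\|L_\pi(f^n)\|)\le\max_d s(z\tilde{F}_d)$, and with a cell structure adapted to invariant train tracks on the pseudo-Anosov pieces the numbers $s(z\tilde{F}_d)$ are governed precisely by the $\mu_j^{1/\ell_j}$.

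The hard part is the splitting of the essential fixed point data across the reducing curves — showing that the classes of $f^n$ do not merge or cancel when the Thurston pieces are reassembled. This rests on $\pi_1$-injectivity of the reducing system together with a path-homotopy argument confining Nielsen paths to the pieces; it is the reducible-map analogue of the fiber-preserving Nielsen addition theorems. The remaining steps are classical (topological entropy of a reducible map is carried by its pieces), already cited (Theorem~\ref{th:I}, Lemma~\ref{lemma:ent}, Lemma~\ref{lem:pa}), or a routine spectral-radius estimate.
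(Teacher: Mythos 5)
The paper offers no proof of this statement — it is quoted as a theorem of Jiang \cite{j} inside a remark — so there is no in-paper argument to compare against; I will assess your sketch on its own terms. It is essentially a reconstruction of Jiang's argument: split the fixed-point data of $f^n$ along the Thurston reducing system, handle each piece by the periodic or pseudo-Anosov case (Theorem~\ref{th:I}), and use $\pi_1$-injectivity of the reducing curves together with the geometric control of the canonical form to show that essential Nielsen classes supported on distinct pieces neither merge nor cancel in $M$. You correctly isolate this splitting as the crux. The ``cheap'' inequality chain — Ivanov's $h\geq\log N^\infty$ (Lemma~\ref{lemma:ent}), the trivial $N(f^n)\leq\|L_\pi(f^n)\|$, the classical fact that $h(f)=\log\lambda$ for a Thurston canonical representative, and the upper bound $L^\infty(f)\leq\lambda$ from the Reidemeister trace formula — is indeed the economical way to close the loop once the lower bound $N^\infty(f)\geq\lambda$ is secured from the extremal pseudo-Anosov piece. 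One caution: you take $\lambda=\max_j\mu_j^{1/\ell_j}$, normalizing by the return time $\ell_j$; this is the correct reading and is needed for $h(f)=\log\lambda$ to hold when pieces are permuted, but the paper's phrase ``largest dilatation of the pseudo-Anosov components'' is loose on this point, so it is worth stating explicitly. Beyond the one genuinely nontrivial step you already flag (non-merging of Nielsen classes across the reducing system), which is exactly what \cite{j} establishes in detail, I see no gap.
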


\end{rk}
\begin{theorem}\label{th:pa1}  If $\phi$ is  any  symplectomorphism with nondegenerate fixed points in
given pseudo-Anosov mapping class $ g$  with dilatation  $\lambda >1$ of surfase $M$
of genus $\geq 2$. Then

 $$ F^{\infty}(g):=\grow(\dim\HF(\phi^n))=\lambda=\exp(h(\psi))= L^{\infty}(\psi)=N^{\infty}(\psi) $$
where $\psi$ is a canonical singular  pseudo-Anosov representative of $g$.
\end{theorem}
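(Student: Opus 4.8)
The plan is to reduce the statement to Lemma~\ref{lem:pa} applied to every iterate of $\phi$, combined with Theorem~\ref{th:I}. First I would record the elementary bookkeeping: since $\phi$ represents the pseudo-Anosov class $g$, each iterate $\phi^n$ represents $g^n$, which is again pseudo-Anosov; its singular canonical representative is $\psi^n$ (iterates of a Thurston canonical representative are canonical, as noted after Theorem~\ref{thm:thur}), and its dilatation is $\lambda^n>1$, because $\psi^n$ multiplies the transverse measure of the unstable foliation by $\lambda^n$. Since pseudo-Anosov classes are weakly monotone (Theorem~\ref{th:pa}) and weak monotonicity is preserved under iteration, $\HF(\phi^n)$ is defined, and by invariance of Floer homology among weakly monotone representatives one has $\dim\HF(\phi^n)=\dim\HF(g^n)$ regardless of whether $\phi^n$ itself has nondegenerate fixed points.

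Next I would apply Lemma~\ref{lem:pa} to a nondegenerate representative of $g^n$ — for instance the standard-form symplectic smoothing of $\psi^n$ — to obtain $\dim\HF(\phi^n)=\dim\HF(g^n)=\|L_{\pi}(\psi^n)\|$ for every $n\ge 1$. Taking the growth rate in $n$ of both sides then gives
$$F^{\infty}(g)=\grow(\dim\HF(\phi^n))=\grow(\|L_{\pi}(\psi^n)\|)=L^{\infty}(\psi),$$
where the last equality is precisely the definition of the asymptotic absolute Lefschetz number of $\psi$.

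Finally, Theorem~\ref{th:I}, applied to the pseudo-Anosov homeomorphism $\psi$ of dilatation $\lambda$, yields $h(\psi)=\log\lambda=\log N^{\infty}(\psi)=\log L^{\infty}(\psi)$; exponentiating gives $\exp(h(\psi))=\lambda=N^{\infty}(\psi)=L^{\infty}(\psi)$. Combining this with the previous display produces the full chain $F^{\infty}(g)=\lambda=\exp(h(\psi))=L^{\infty}(\psi)=N^{\infty}(\psi)$, which is the assertion.

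The one point deserving genuine care — rather than a routine calculation — is the identification $\dim\HF(\phi^n)=\dim\HF(g^n)$ in the case where $\phi^n$ acquires degenerate fixed points: here one must invoke that $\HF$ is well-defined for, and invariant among, weakly monotone symplectomorphisms (Seidel's cohomologous perturbations of the action form, cf.\ \cite{S,c1}), so that $\dim\HF(g^n)$ is a bona fide mapping-class invariant. Once that is in place, the combinatorial identity $\dim\HF(g^n)=\|L_{\pi}(\psi^n)\|$ of Lemma~\ref{lem:pa}, which rests on the topological separation of the fixed points of a pseudo-Anosov map, applies verbatim to every $n$, and the rest is formal.
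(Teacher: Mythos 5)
Your argument is correct and follows essentially the same route as the paper: apply Lemma~\ref{lem:pa} to each iterate to obtain $\dim\HF(\phi^n)=\|L_{\pi}(\psi^n)\|$ for all $n$, then conclude via Theorem~\ref{th:I}. Your extra care in routing through the mapping-class invariant $\dim\HF(g^n)$ to handle iterates $\phi^n$ that may acquire degenerate fixed points actually fills a small gap that the paper leaves implicit, since Lemma~\ref{lem:pa} is stated only for nondegenerate representatives.
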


{\it Proof.} By  lemma \ref{lem:pa} we have that  $\dim\HF(\phi^n)=\|L_{\pi}(\psi^n) \|$ for every $n$.
So,  the result follows from theorem \ref{th:I}.

\begin{theorem}\label{th:main} Let $\bar{\phi}$ be a perturbed standard form map $\phi$ in a reducible
mapping class $g$ of compact surface  of genus $\geq 2$ and  $\lambda$ is the largest  dilatation  of the  pseudo-Anosov components( $\lambda=1$ if there is no pseudo-Anosov components). Then

 $ F^{\infty}(g):=\grow(\dim\HF((\bar{\phi})^n))=\lambda=\exp(h(\psi))= L^{\infty}(\psi)=N^{\infty}(\psi)$
 where $\psi$ is canonical representative of mapping class $g$.
\end{theorem}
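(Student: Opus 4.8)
The plan is to reduce Theorem~\ref{th:main} to the already-established identity $h(\psi)=\log\lambda=\log N^{\infty}(\psi)=\log L^{\infty}(\psi)$ of Theorem~\ref{th:j}, exactly as Theorem~\ref{th:pa1} reduced the pseudo-Anosov case to Theorem~\ref{th:I}. The one extra ingredient needed is a dimension formula for $\dim\HF((\bar\phi)^n)$ that exhibits its growth rate as $\lambda$. First I would observe that the perturbed standard form map $\bar\phi$ has the property that all its iterates $(\bar\phi)^n$ are again (perturbed) standard form maps in the mapping class $g^n$, so Theorem~\ref{th:red} applies to each $(\bar\phi)^n$ and gives
\[
\dim\HF((\bar\phi)^n)=\dim H_*(M_a^{(n)},\p_{+}M_{\id}^{(n)};\Z_2)+\sum_p(\cdots)+\sum_q(\cdots)+L((\bar\phi)^n|M_1)+\dim HF_*((\bar\phi)^n|M_2),
\]
where the superscript $(n)$ indicates the decomposition associated to $\phi^n$. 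The first three summands are bounded by a constant depending only on the topology of $M$ (the homology groups $H_*(M_a,\p_{+}M_{\id};\Z_2)$ and the prong counts $p-1$, $q$ do not grow with $n$, only the combinatorics of which component meets which permutes), so their contribution to the growth rate is $1$. The periodic summand $L((\bar\phi)^n|M_1)$ is bounded because $\bar\phi|M_1$ is periodic, hence contributes $1$ as well (cf. Example~\ref{ex:1}). Therefore
\[
\grow(\dim\HF((\bar\phi)^n))=\grow(\dim HF_*((\bar\phi)^n|M_2)).
\]

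Next I would analyze the pseudo-Anosov part $M_2$. On each pseudo-Anosov component $(\bar\phi)^n|M_2$ is a symplectomorphism with nondegenerate fixed points in a pseudo-Anosov mapping class (with punctures where it meets identity components), so by Lemma~\ref{lem:pa} and Theorem~\ref{th:pa} we have $\dim HF_*((\bar\phi)^n|M_2)=\|L_\pi(\psi^n|M_2)\|$, where $\psi|M_2$ is the singular canonical pseudo-Anosov representative on those components. Hence $\grow(\dim HF_*((\bar\phi)^n|M_2))=L^{\infty}(\psi|M_2)$. Now I would invoke Theorem~\ref{th:j}: for the canonical representative $\psi$ of the reducible mapping class $g$, the asymptotic absolute Lefschetz number, asymptotic Nielsen number, $\exp(h(\psi))$, and the largest pseudo-Anosov dilatation $\lambda$ all coincide; moreover the essential periodic orbits of $\psi$ that contribute to the exponential growth of $N(\psi^n)$ and of $\|L_\pi(\psi^n)\|$ are precisely those living on the pseudo-Anosov components, so the growth rate restricted to $M_2$ equals the global $\lambda$. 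Stringing these equalities together yields $F^{\infty}(g)=\lambda=\exp(h(\psi))=L^{\infty}(\psi)=N^{\infty}(\psi)$.

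The main obstacle I anticipate is the bookkeeping in the claim that the non-pseudo-Anosov summands in Theorem~\ref{th:red} stay bounded uniformly in $n$ — one must check that passing from $\phi$ to $\phi^n$ does not create new twisting regions or change prong numbers, only permutes and possibly merges components, and that the relevant relative homology groups therefore range over a finite set. This requires citing the fact (from Thurston's theory, as noted after Theorem~\ref{thm:thur}) that the canonical representative's reducing system is iterate-invariant, so the decomposition $M=M_{\id}\cup M_1\cup M_2$ is the same for all $n$ up to the permutation action, and the $M_a,M_{b,p},M_{c,q}$ pieces for $\phi^n$ are unions of those for $\phi$. The secondary subtlety is ensuring that the equality $\grow(\dim HF_*((\bar\phi)^n|M_2))=L^{\infty}(\psi|M_2)$ correctly matches the \emph{global} $L^{\infty}(\psi)$; this follows because on periodic and identity pieces $\|L_\pi(\psi^n|\cdot)\|$ is bounded, so the norm of the Reidemeister trace of $\psi^n$ on all of $M$ has the same growth rate as its restriction to $M_2$. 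Once these two points are in place, the theorem follows formally, and in particular (combined with the trace formula \eqref{eq:determinant}) the radius of convergence of $F_g(t)$ is $1/\lambda$, which is the content of the subsequent theorem.
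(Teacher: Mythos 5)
Your proof follows essentially the same route as the paper: apply Theorem~\ref{th:red} to each iterate, argue the non-pseudo-Anosov summands contribute growth rate $1$, and reduce to the pseudo-Anosov part, then invoke the Thurston/Ivanov/Jiang result (Theorem~\ref{th:j}) to close the chain of equalities. The paper analyzes the $M_2$ summand by decomposing into individual pseudo-Anosov components, applying Theorem~\ref{th:pa1} to each and taking $\max_j\lambda_j$, whereas you route through Lemma~\ref{lem:pa} to identify $\dim HF_*((\bar\phi)^n|M_2)$ with $\|L_\pi(\psi^n|M_2)\|$ and then match this with the global $L^\infty(\psi)$; this is a valid variant (and makes the needed "pA components dominate" observation explicit, which the paper leaves implicit). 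One small inaccuracy: you claim the first four summands in the Theorem~\ref{th:red} formula stay bounded in $n$, but the paper (following \cite{c1,Sm}) only claims at most linear growth, and indeed the identity-component contributions can grow linearly with $n$ because the twisting regions for $\phi^n$ involve $n$ parallel annuli separated by identity regions; this does not affect your conclusion since linear growth still has growth rate $1$, but the bookkeeping claim as stated is too strong.
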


{\it Proof.} It follows  from the theorem \ref{th:red}  that for every n 
$$
\dim \HF(g^n)=\dim \HF((\bar{\phi})^n)= \dim H_{*}(M_a,\p_{+}{M_{\id}};\Z_2) +
$$
$$
+ \sum_{p}(\dim H_{*}(M_{b,p}^0,\p_{+}M_{b,p};\Z_2) +(p-1)|\pi_{0}(M_{b,p})|) +
$$
$$
+ \sum_{q}(\dim H_{*}(M_{c,q},\p_{+}M_{c,q};\Z_2) +q|\pi_{0}(M_{c,q)})|) +
$$
$$
+L((\bar{\phi})^n|M_1)+ \dim HF_{*}((\bar{\phi})^n|M_2).
$$

We need   to investigate only  the growth  of the last summand in this formula  because the rest part  in the formula  grows at most linearly \cite{ c1, Sm}.
We have $\dim HF_{*}((\bar{\phi})^n|M_2)=\sum_{j}\dim HF_{*}((\bar{\phi_j})^n|M_2)$, where the sum is taken over different  pseudo-Anosov components of 
$\phi^n|M_2 $.
It follows from the  theorem \ref{th:pa1} that  $\dim HF_{*}((\bar{\phi_j})^n|M_2)$ grows as $\lambda_j^n$, where $\lambda_j$ is the dilatation of the pseudo-Anosov component $\bar{\phi_j}|M_2$.

Taking growth rate  in $n$, we get $ F^{\infty}(g):=\grow(\dim\HF((\bar{\phi})^n))=\max_{j}\lambda_j=\lambda=\exp(h(\psi))$.

\begin{cor}\label{cor:entropy} The asymptotic invariant  $F^{\infty}(g) > 1 $ if and only if $\psi$ has a pseudo-Anosov component.

\end{cor}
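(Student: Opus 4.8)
The plan is to read the statement directly off the two main computations, Theorem~\ref{th:pa1} and Theorem~\ref{th:main}, together with the classical Thurston-theoretic fact that the dilatation of a pseudo-Anosov homeomorphism of a surface (or surface component) of negative Euler characteristic is always strictly greater than $1$; this is built into Theorem~\ref{thm:thur}.

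First I would dispose of the pseudo-Anosov and periodic cases. If $g$ is pseudo-Anosov, then its canonical singular representative $\psi$ consists entirely of one pseudo-Anosov component, and Theorem~\ref{th:pa1} gives $F^{\infty}(g)=\lambda>1$, so both sides of the claimed equivalence hold. If $g$ is periodic, then $\psi$ is an isometry with no pseudo-Anosov component, and Example~\ref{ex:1} gives $F^{\infty}(g)=1$, so both sides fail. Hence the substance of the corollary lies in the reducible case, where Theorem~\ref{th:main} applies and states $F^{\infty}(g)=\lambda$, with $\lambda$ the largest dilatation among the pseudo-Anosov components of $\psi$ and $\lambda=1$ by convention when there are no such components.

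From here the argument is just the bookkeeping of this convention. If $\psi$ has at least one pseudo-Anosov component, that component has dilatation $>1$, so $\lambda>1$ and therefore $F^{\infty}(g)=\lambda>1$. Conversely, if $\psi$ has no pseudo-Anosov component, then $\lambda=1$ by definition and $F^{\infty}(g)=1$; this is consistent with Examples~\ref{ex:1} and~\ref{ex:2}, where for periodic and algebraically finite classes the sequence $\dim\HF(\bar\phi^{\,n})$ grows at most linearly and hence has growth rate $1$. Combining the two implications yields the equivalence: $F^{\infty}(g)>1$ if and only if $\psi$ has a pseudo-Anosov component.

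There is no genuine obstacle; the only point deserving care is that the value $\lambda=1$ assigned to the pseudo-Anosov-free case in Theorems~\ref{th:pa1} and~\ref{th:main} is exactly what the growth-rate normalization $\grow(a_n)=\max\{1,\limsup|a_n|^{1/n}\}$ produces in Examples~\ref{ex:1} and~\ref{ex:2}, so the statements match up without any gap. Thus the corollary follows formally once Theorems~\ref{th:pa1} and~\ref{th:main} are in hand.
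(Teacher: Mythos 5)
Your proof is correct and takes essentially the same approach the paper intends: the corollary is stated without explicit proof as an immediate consequence of Theorem~\ref{th:pa1} and Theorem~\ref{th:main} (together with Examples~\ref{ex:1} and~\ref{ex:2} for the pseudo-Anosov-free cases), plus the fact that pseudo-Anosov dilatations are strictly greater than $1$. Your case-by-case bookkeeping is a careful spelling-out of exactly this reasoning.
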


Although the exact evaluation of the asymptotic invariant $F^{\infty}(g)$ would be desirable, its estimation is a more realistic goal. We carry out such estimation using notations and results from
sections \ref{section: Reidemeister} and \ref{section: Lefschetz}.

\begin{prop} \label{prop:main} 
Suppose  $\rho:  H \rightarrow U(n)$ is a unitary representation and $\psi$ is canonical representative of
reducible mapping class $g$ of compact surface  of genus $\geq 2$. Let $w$ be a zero or a pole
of the rational function  $L^{\psi}_{\rho}(t)\in \C (t)$     . Then
$$
 \frac{1}{\mid w \mid}  \leq   F^{\infty}(g)  \leq  \max_d \| z\tilde F_d \|
$$

\end{prop}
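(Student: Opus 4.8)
The plan is to squeeze $F^{\infty}(g)$ between the two bounds using the identity $F^{\infty}(g)=L^{\infty}(\psi)=\grow(\|L_{\pi}(\psi^n)\|)$ supplied by Theorem~\ref{th:main}, together with two inputs pulling in opposite directions: the Reidemeister trace formula $L_{H}(\psi^n)=\sum_d(-1)^d[\Tr(z\tilde F_d)^n]$ of Section~\ref{section: Reidemeister} for the upper estimate, and the rational-function structure of the twisted Lefschetz zeta function $L^{\psi}_{\rho}(t)$ of Section~\ref{section: Lefschetz} for the lower estimate. As a harmless preliminary I would replace $\psi$ by a cellular map in the same homotopy class, so that the matrices $\tilde F_d$ are defined; since $L_{H}(\psi^n)$, $L_{\rho}(\psi^n)$ and $L^{\psi}_{\rho}(t)$ are all homotopy invariants this does not affect either bound.

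\textbf{Upper bound.} Starting from $\|L_{\pi}(\psi^n)\|=\|L_{H}(\psi^n)\|$ (the Remark on norms) and the trace formula, I would apply $\|[a]\|\le\|a\|$ for the projection $\Z H\to\Z H_c$, then $\|\Tr A\|\le\|A\|$, then submultiplicativity $\|AB\|\le\|A\|\,\|B\|$, and finally the fact that only the dimensions $d=0,1,2$ occur, to obtain
$$
\|L_{\pi}(\psi^n)\|\ \le\ \sum_d\|(z\tilde F_d)^n\|\ \le\ \sum_d\|z\tilde F_d\|^{\,n}\ \le\ 3\,\bigl(\max_d\|z\tilde F_d\|\bigr)^{n}.
$$
Taking $n$-th roots, then $\limsup$, then the growth rate gives $F^{\infty}(g)=\grow(\|L_{\pi}(\psi^n)\|)\le\max_d\|z\tilde F_d\|$; the $\max\{1,\cdot\}$ in the definition of $\grow$ is absorbed because the $0$-dimensional block $z\tilde F_0$ is nonzero, so $\max_d\|z\tilde F_d\|\ge1$.

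\textbf{Lower bound.} Here the unitarity of $\rho$ enters decisively: for every $h\in H$ the matrix $\rho(h)$ lies in $U(n)$, so $|\Tr\rho(h)|\le n$, and applying this to $L_{\rho}(\psi^n)=\sum_{O^n}\ind(O^n,\psi^n)\,\Tr(cd_H(O^n))^{\rho}$ gives $|L_{\rho}(\psi^n)|\le n\,\|L_{H}(\psi^n)\|=n\,\|L_{\pi}(\psi^n)\|$; hence $\limsup_n|L_{\rho}(\psi^n)|^{1/n}\le\limsup_n\|L_{\pi}(\psi^n)\|^{1/n}\le F^{\infty}(g)$. On the other side, $L^{\psi}_{\rho}(t)$ is rational with $L^{\psi}_{\rho}(0)=1$, so $\log L^{\psi}_{\rho}(t)=\sum_{n\ge1}\frac{1}{n}L_{\rho}(\psi^n)t^n$ is holomorphic near $0$ with radius of convergence $R_0=1/\limsup_n|L_{\rho}(\psi^n)|^{1/n}$ (using $n^{1/n}\to1$). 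If $R_0$ exceeded $|w|$, then $\exp$ of the resulting holomorphic function on the disc $|t|<R_0$ would agree with $L^{\psi}_{\rho}$ there, which is impossible since $\exp$ of a holomorphic function has neither zeros nor poles while $L^{\psi}_{\rho}$ has one at $w$. Thus $R_0\le|w|$, i.e. $\frac{1}{|w|}\le\limsup_n|L_{\rho}(\psi^n)|^{1/n}\le F^{\infty}(g)$, completing the chain of inequalities.

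\textbf{Main obstacle.} The proof is essentially bookkeeping once the right objects are in play; the step that needs the most thought is the link between the two sides, namely the observation that unitarity of $\rho$ forces $|L_{\rho}(\psi^n)|\le n\|L_{\pi}(\psi^n)\|$, so that the zeros and poles of the purely algebraic object $L^{\psi}_{\rho}(t)$ — which can be read off from $\det\bigl(E-t(z\tilde F_d)^{\rho}\bigr)$ via formula~\eqref{eq:determinant} — are controlled by the dynamically defined quantity $L^{\infty}(\psi)=F^{\infty}(g)$ of Theorem~\ref{th:main}. The only other care needed is the routine passage to a cellular representative so that the matrices $\tilde F_d$ appearing in the bound are well defined.
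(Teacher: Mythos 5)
Your argument is correct and follows the same strategy as the paper: the lower bound comes from the rationality of the twisted Lefschetz zeta function $L^{\psi}_{\rho}(t)$ together with the unitary trace estimate $|L_{\rho}(\psi^n)|\lesssim\|L_{H}(\psi^n)\|$, and the upper bound from the Reidemeister trace formula combined with norm submultiplicativity, with Theorem~\ref{th:main} supplying the bridge $F^{\infty}(g)=L^{\infty}(\psi)=\grow(\|L_{\pi}(\psi^n)\|)$. You are in fact slightly more careful than the paper in two places: you keep the constant $n=\dim\rho$ in the bound $|\Tr\rho(h)|\le n$ (the paper drops it, harmlessly for the growth rate), and you invoke only the growth-rate identity from Theorem~\ref{th:main} rather than the exact equality $\dim\HF(g^n)=\|L_{\pi}(\psi^n)\|$ that appears in the paper's chain but does not literally hold for general reducible classes.
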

{\it Proof.} We know from complex analysis and definition of the twisted 
Lefschetz zeta function that $Growth (L_{\rho}(\psi^n))$ is the reciprocal of the radius
of convergence of the function $log(L^{\psi}_{\rho}(t)$, hence $Growth( L_{\rho}(\psi^n))\geq \frac{1}{\mid w \mid} $. 

On other hand, according to section \ref{section: Reidemeister}, $H$-coordinates of $n$-orbit
classes are in the form $[z^n]g$ with $g\in\pi$. So we can assume 
$L_H(\psi^n)=\sum_i k_i[z^ng_i]$, where the $[z^ng_i]$ are different conjugacy classes in $H$.
Since the trace of a unitary matrix is bounded by its dimension, we get that 
$L_{\rho}(\psi^n) \in \C $ are bounded by $$\mid L_{\rho}(\psi^n)\mid =\mid \sum_i k_i\tr(z^ng_i)^\rho\mid
\leq \sum_i \mid k_i\mid\mid\tr(z^ng_i)^\rho\mid \leq \sum_i \mid k_i\mid=\|L_H(\psi^n)\|.$$
Hence $Growth (L_{\rho}(\psi^n))\leq L^{\infty}(\psi).$ 
From theorem \ref{th:main}  it follows that $ F^{\infty}(g) =L^{\infty}(\psi)$.
So we get the estimation from below $ \frac{1}{\mid w \mid}  \leq   F^{\infty}(g). $
The initial data of our lower estimation is the knowledge of the $\Z H$-matrices $\tilde F_d $
provided by a cellular map, which enables us to compute the twisted Lefschetz zeta function.
There is also a way to derive an upper bound from the same data. We have
$$
\dim\HF(g^n)= \|L_{\pi}(\psi^n)\|\!=\!\|L_H(\psi^n)\|\!=\!\|\sum_d(-1)^d[\tr(z\tilde F_d)^n]\|\!\leq\sum_d\| [\tr(z\tilde F_d)^n]\|\!
$$
$$
\leq\sum_d{||}\tr(z\tilde F_d)^n{||}\leq \sum_d\tr((z\tilde
F_d)^n)^{norm}\leq\sum_d\tr((z\tilde F_d)^{norm})^n
$$
$$
\leq \sum_d\tr((\tilde F_d)^{norm})^n.
$$
Hence $$F^{\infty}(g) =Growth(\|L_{\pi}(\psi^n)\|)=Growth(\|L_{H}(\psi^n)\|)\leq Growth(\sum_d \tr((\tilde F_d)^{norm})^n)
$$
$$= max_d ( Growth(\tr((\tilde F_d)^{norm})^n)) =max_d  (s(\tilde F_d)^{norm}).$$

\begin{rk}
A practical difficulty in the use of $L_{\rho}(\psi^n))$ and twisted Lefschetz zeta
function $L^{\psi}_{\rho}(t)$ for  estimation is to find a useful representation $\rho$.
Following  approach of Boju Jiang in \cite{j}, section 1.7  we can weaken the assumption on $\rho$ in proposition \ref{prop:main}. There are many examples in \cite{j}, chapter 4 which illustrate method of estimation above
for surface homeomorphisms.
\end{rk}

\subsection{Symplectic Floer homology and geometry of 3-manifolds}

A three dimensional manifold $M^3$ is called a graph manifold if there is a system of mutually disjoint two-dimensional tori $ T_i$ in $M^3$ such that the closure of each component of $M^3$
 cut along  union of tori $T_i$ is a product of surface  and $S^1$.

\begin{theorem}\label{teo:graph}
 Let $\chi(M) < 0 $. The mapping torus $T_{\phi}$ is a graph-manifold if and only if 
asymptotic invariant $F^{\infty}(g)=1$.
 If $ Int (T_{\phi}) $ admits a hyperbolic structure of finite volume, then asymptotic invariant $F^{\infty}(g) > 1$. If asymptotic invariant  $F^{\infty}(g )> 1$ then 
 $\phi$ has an infinite set of periodic points with pairwise different periods.
\end{theorem}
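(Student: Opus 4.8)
The plan is to deduce all three assertions from Corollary~\ref{cor:entropy} and Theorem~\ref{th:main} together with Thurston's geometric picture of surface bundles over the circle. Since $\phi$ and the canonical representative $\psi$ are isotopic, the mapping tori $T_\phi$ and $T_\psi$ are homeomorphic by a fibre-preserving homeomorphism, and by Corollary~\ref{cor:entropy} one has $F^\infty(g)=1$ exactly when $\psi$ has no pseudo-Anosov component, i.e.\ when $g$ is algebraically finite. For the first assertion I would argue as follows. If $\psi$ is periodic, then $\psi^m=\id_M$ for some $m$, so $T_\psi$ is finitely covered by $M\times S^1$ and the orbits of the suspension flow make $T_\psi$ a Seifert fibred space, hence a graph manifold. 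If $\psi$ is reducible and algebraically finite, cut $T_\psi$ along the embedded tori obtained by suspending the $\psi$-orbits of the reducing curves; by Definition~\ref{reduciblestandard} and Theorem~\ref{thm:thur} each resulting piece is the mapping torus of a first return map on a subsurface that is periodic or a product of parallel Dehn twists, hence Seifert fibred, so $T_\psi\cong T_\phi$ is a graph manifold. Conversely, if $\psi$ has a pseudo-Anosov component on a subsurface $S$, then by Thurston's hyperbolisation theorem for fibred $3$-manifolds \cite{Th} the mapping torus of that component carries a complete finite-volume hyperbolic metric on its interior; it is atoroidal and not Seifert fibred, yet it occurs as a piece of the torus decomposition of $T_\psi$, so $T_\phi$ is not a graph manifold. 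Hence $T_\phi$ is a graph manifold precisely when $\psi$ has no pseudo-Anosov component, that is, precisely when $F^\infty(g)=1$.

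For the second assertion: if $\mathrm{Int}(T_\phi)$ carries a complete hyperbolic metric of finite volume, then $T_\phi$ is atoroidal and not Seifert fibred, so by the Thurston classification $\phi$ must be isotopic to a pseudo-Anosov map; then $g$ is a pseudo-Anosov mapping class with dilatation $\lambda>1$ and Theorem~\ref{th:pa1} gives $F^\infty(g)=\lambda>1$. Equivalently, a finite-volume hyperbolic $3$-manifold is never a graph manifold, so the first assertion already yields $F^\infty(g)\neq 1$, hence $F^\infty(g)>1$ since $\grow(\cdot)\geq 1$.

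For the third assertion, assume $F^\infty(g)>1$. Since Nielsen numbers are homotopy invariants and $\phi^n$ is homotopic to $\psi^n$, Theorem~\ref{th:main} gives $\grow\bigl(N(\phi^n)\bigr)=\grow\bigl(N(\psi^n)\bigr)=N^\infty(\psi)=F^\infty(g)>1$; in particular $\Fix(\phi^n)\neq\emptyset$ for infinitely many $n$, so $\phi$ has periodic points. Suppose, toward a contradiction, that the minimal periods of the periodic points of $\phi$ form a finite set, and let $N$ be its least common multiple. For any $n$, each point of $\Fix(\phi^n)$ has minimal period dividing both $n$ and $N$, hence dividing $\gcd(n,N)$, so $\Fix(\phi^n)=\Fix\bigl(\phi^{\gcd(n,N)}\bigr)$; moreover, since $\gcd(n,N)$ divides $n$, any path $c$ with $c\simeq\phi^{\gcd(n,N)}\circ c$ rel endpoints also satisfies $c\simeq\phi^{n}\circ c$, so each fixed point class of $\phi^n$ is a union of fixed point classes of $\phi^{\gcd(n,N)}$. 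Therefore $N(\phi^n)\leq\max_{d\mid N}\#\{\text{non-empty fixed point classes of }\phi^{d}\}=:C<\infty$ for every $n$, whence $\grow(N(\phi^n))=1$, a contradiction. So the minimal periods are unbounded, and choosing periodic points $x_1,x_2,\dots$ of strictly increasing minimal period produces an infinite set of periodic points of $\phi$ with pairwise different periods.

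The step I expect to be the main obstacle is marshalling the $3$-manifold input correctly: one needs Thurston's hyperbolisation theorem for mapping tori and the fact that the torus (JSJ) decomposition of the mapping torus of a Thurston-canonical map consists precisely of the mapping tori of its Thurston components. Once these standard facts are granted, all of the Floer-theoretic content is carried by Corollary~\ref{cor:entropy} and Theorem~\ref{th:main}, and the dynamical statement reduces to the elementary Nielsen-class bookkeeping above.
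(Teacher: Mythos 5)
Your proof is correct, but it differs from the paper's argument in two places, and it is worth noting what each route buys. For the first assertion, the paper simply cites Kobayashi's theorem that $T_\phi$ is a graph manifold if and only if the Thurston canonical representative has no pseudo-Anosov component, and then combines this with Corollary~\ref{cor:entropy}; you instead re-derive the relevant direction of Kobayashi's result by hand, decomposing $T_\psi$ along the suspension tori and using Thurston hyperbolization on the pseudo-Anosov pieces. That is a legitimate route, though it assumes (as you flag yourself) that the suspension tori realize the JSJ decomposition, which is exactly the nontrivial content of the cited theorem; appealing to \cite{koba} directly is cleaner. The second assertion is argued identically to the paper, via Thurston's hyperbolization for fibered $3$-manifolds. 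For the third assertion your approach is genuinely different and, arguably, more satisfying: the paper just cites Kobayashi for the fact that a pseudo-Anosov map has infinitely many periodic points with pairwise distinct periods, whereas you give a self-contained contradiction argument using only Nielsen-class bookkeeping. Your key observations are sound: if the minimal periods were bounded with lcm $N$, then for every $n$ one would have $\Fix(\phi^n)=\Fix(\phi^{\gcd(n,N)})$, and since a Nielsen relation for $\phi^{\gcd(n,N)}$ implies one for $\phi^n$, the number of nonempty (hence essential) fixed point classes of $\phi^n$ is bounded by the finite constant $\max_{d\mid N}\#\{\text{nonempty classes of }\phi^{d}\}$, forcing $\grow(N(\phi^n))=1$ and contradicting $F^\infty(g)=N^\infty(\psi)>1$ from Theorem~\ref{th:main}. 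This replaces a black-box citation with an elementary argument that uses only the homotopy invariance of Nielsen numbers and the finiteness of nonempty fixed point classes, and it has the merit of working directly for $\phi$ rather than detouring through the canonical representative.
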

{\it Proof.}
T. Kobayashi \cite {koba} has proved that mapping torus $T_{\phi}$ is a graph-manifold if and only if the
Thurston canonical representative
for $\phi$ does not contain pseudo-Anosov components. So, Example \ref{ex:2}  and  Corrolary \ref{cor:entropy}  implie the first statement of the theorem. Thurston has proved \cite {thu3}, \cite{su} that  $ Int (T_{\phi})$  admits a hyperbolic structure of finite volume if and only if $\phi$ is isotopic to pseudo-Anosov homeomorphism.
This proves the second statement of the theorem.  It is known \cite{koba} that
pseudo-Anosov homeomorphism has infinitely many periodic points those periods are
mutually distinct. This proves the last  statement of the theorem.

Recall that the set of isotopy classes of orientation preserving homeomorphisms $\phi:M
\to M$ forms a group called the mapping class group, denoted $\Mod(M)$.  This group acts properly
discontinuously by isometries on the Teichm\"uller space $\Teich(M)$ with quotient the moduli space $\M(M)$ of Riemann
surfaces homeomorphic to $M$.  The closed geodesics in the orbifold $\M(M)$ correspond precisely to the conjugacy
classes of mapping classes represented by pseudo-Anosov homeomorphisms, and moreover, the length of a geodesic
associated to a pseudo-Anosov $\phi: M\to M$ is $\log(\lambda(\phi))$. 
We define  the Floer spectrum of $\M(M)$ as the set 
\[ \spec_F(\Mod(M))=\{\log(F^{\infty}(g )): g \mbox{ is pseudo-Anosov mapping class}\} \subset (0,\infty).\]
 
 By theorem \ref{th:pa1} 
 the Floer spectrum of $\M(M)$ coincides with the length spectrum 

\[ \spec(\Mod(M))=\{\log(\lambda(\phi)): \phi:M \to M\mbox{ is pseudo-Anosov} \} \subset (0,\infty).\]

Arnoux--Yoccoz \cite{AY} and Ivanov \cite{Iv} proved that $\spec(\Mod(M))=\spec_F(\Mod(M))$ is a closed discrete subset of $\R$. It
follows that $\spec_F(Mod(M))$ has, for each $M$, a least element, which we shall denote by $F(M)$.  We can think of
$F(M)$ as the {\em systole} of $\M(M)$.

From result of Penner\cite{Pe} it is follows  that there exists constants $0 < c_0 < c_1$ so that for all closed surfaces $M$ with $\chi(M) < 0$, one has
\begin{equation}
\label{eq:bounding dil} c_0 \leq F(M)   |\chi(M)|\leq c_1.
\end{equation}
The proof of the lower bound comes from a spectral estimate for Perron--Frobenius matrices, with $c_0 > \log(2)/6$ (see \cite{Pe} and \cite{Mc2}).   As such, this lower bound is valid for all surfaces $M$ with $\chi(M) < 0$,
including punctured surfaces.  The upper bound is proven by constructing pseudo-Anosov homeomorphisms $\phi_g:M_g \to
M_g$ on each closed surface of genus $g \geq 2$ so that $\lambda(\phi_g) \leq e^{c_1/(2g-2)}$.

The best known upper bound for $\{F(M_g) |\chi(M_g)|\}$ follows from Hironaka - Kin \cite{HK} and from Minakawa
\cite{Mk}, and is $2 \log(2 + \sqrt{3})$.  The situation for punctured surfaces is more mysterious.

For a pseudo-Anosov  $\phi$, let $\tau_{WP}(\phi)$ denote the 
translation length of $\phi$, thought of as an
isometry of $\Teich(M)$ with the Weil--Petersson metric. Brock \cite{Br} 
has proven that the volume of the mapping torus $T_\phi$ and
$\tau_{WP}(\phi)$ satisfy a bilipschitz relation, and in particular
\[\vol(M_\phi) \leq c \tau_{WP}(\phi).\]
Moreover, there is a relation between the Weil--Petersson
translation length and the Teichm\"uller translation length  $\tau_{\Teich}(\phi) = \log(\lambda(\phi))$ (see
\cite{Li}), which implies
\[ \tau_{WP}(\phi) \leq \sqrt{2\pi |\chi(S)|} \log(\lambda(\phi)). \]

Then the theorem \ref{th:pa1} implies  the following estimation
\[\vol(T_\phi) \leq c \tau_{WP}(\phi)\leq c\sqrt{2\pi |\chi(M)|} \log(F^{\infty}(g )),\]  where $g$ is pseudo-Anosov mapping class of $\phi$.

However, Brock's constant $c=c(M)$ depends on the surface $M$, and moreover $c(M) \geq |\chi(M)|$ when $|\chi(M)|$ is sufficiently large.

\subsection{Radius of convergence of the symplectic  zeta function}
 
In \cite{f} we have introduced  a symplectic zeta function 

\begin{eqnarray*}
F_g(t)= F_\phi(t) &= &
 \exp\left(\sum_{n=1}^\infty \frac{\dim\HF(\phi^n)}{n} t^n \right)
\end{eqnarray*}

assigned to mapping class $g$ via  zeta function $F_\phi(t)$  of  a monotone( or weakly monotone)  representative $\phi\in\symp^m(M,\omega)$ of $g$. Symplectomorphisms  $\phi^n$
are  also monotone(weakly monotone) for all $n>0$ \cite{G, c1} so,  symplectic zeta function
 $ F_\phi(t)$ is an invariant as $\phi$ is deformed through monotone(or weakly monotone) symplectomorphisms in $g$.
These imply that we have a symplectic Floer homology invariant  $ F_g(t)$  canonically assigned to each
mapping class  $g$.  
A motivation for the definition of this zeta function was  a connection \cite{f,G} between Nielsen numbers  and Floer homology and nice
analytic properties of Nielsen zeta function \cite{pf,fv,f,fl,FelshB,fh1,fh2} 

We denote by $R$ the radius of convergence of the symplectic  zeta function $F_g(t)= F_\phi(t)$.

In this section we  give  exact  algebraic lower estimation for the radius $R$
using Reidemeister trace formula   for generalized Lefschetz numbers from section \ref{section: Reidemeister}

\begin{theorem}
If $\phi$ is any symplectomorphism with nondegenerate fixed points in
given pseudo-Anosov mapping class $ g$  with dilatation  $\lambda >1$  of  compact surface   $M$  of genus $\geq 2$, then  the symplectic  zeta function $F_g(t)$ has positive radius of convergence $R=\frac{1}{\lambda}$.
Radius of convergence $R$ admits following estimations 
\begin{equation}
 R\geq \frac{1}{\max_d \| z\tilde F_d \|} > 0
\end{equation}
and
\begin{equation}
R\geq \frac{1}{\max_d s(\tilde F_d^{norm})} > 0 
\end{equation}
 
\end{theorem}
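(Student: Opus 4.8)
The plan is to identify the radius of convergence $R$ of $F_g(t)$ with the reciprocal of the growth rate $F^{\infty}(g)$, replace $F^{\infty}(g)$ by $\lambda$ using Theorem~\ref{th:pa1}, and then read off the two algebraic bounds from the norm estimates already established in the proof of Proposition~\ref{prop:main}. Throughout, write $a_n:=\dim\HF(\phi^n)\ge 0$, so that $F_g(t)=\exp\big(h(t)\big)$ with $h(t)=\sum_{n\ge 1}\tfrac{a_n}{n}t^n$.

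First I would prove $R=1/\lambda$. The Cauchy--Hadamard radius of $h$ is $1/\limsup_n (a_n/n)^{1/n}=1/\limsup_n a_n^{1/n}$ (since $n^{1/n}\to 1$), so $h$, and hence $F_g=\exp(h)$, is holomorphic on the disc of that radius, which gives $R\ge 1/\limsup_n a_n^{1/n}$. For the reverse inequality, observe that the Taylor coefficients $c_n$ of $F_g$ are nonnegative and satisfy $c_n\ge a_n/n$ for $n\ge 1$: the degree-$n$ term of $h$ already contributes $a_n/n$, and the contributions of $h^k/k!$ for $k\ge 2$ are nonnegative because $a_m\ge 0$. Hence $\limsup_n c_n^{1/n}\ge\limsup_n a_n^{1/n}$ and $R\le 1/\limsup_n a_n^{1/n}$. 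As $\lambda>1$, the truncation by $1$ in the definition of $\grow$ is inactive, so $\limsup_n a_n^{1/n}=\grow(\dim\HF(\phi^n))=F^{\infty}(g)$, which equals $\lambda$ by Theorem~\ref{th:pa1}. Therefore $R=1/\lambda>0$.

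Next I would derive the two estimates. Powers of a canonical pseudo-Anosov representative are again canonical, so applying Lemma~\ref{lem:pa} to the iterates $g^n$ gives $a_n=\|L_\pi(\psi^n)\|=\|L_H(\psi^n)\|$ for all $n$ (the second equality because fixed point classes of $\psi^n$ lying in one $n$-orbit class share their index). The Reidemeister trace formula together with submultiplicativity of the $\Z H$-matrix norm then yields the chain from the proof of Proposition~\ref{prop:main}:
\[
a_n=\left\|\sum_d(-1)^d[\Tr(z\tilde F_d)^n]\right\|\le\sum_d\|(z\tilde F_d)^n\|\le\sum_d\|z\tilde F_d\|^n .
\]
Taking $n$-th roots, passing to $\limsup$, and using that a finite sum of exponentials grows at the rate of its dominant term, one obtains $\lambda=F^{\infty}(g)\le\max_d\|z\tilde F_d\|$. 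Continuing the same chain through matrices of norms, and using $(z\tilde F_d)^{norm}=\tilde F_d^{norm}$ together with $\Tr(A^n)\le(\dim A)\,s(A)^n$ for a nonnegative matrix $A$, one gets $a_n\le\sum_d\Tr\big((\tilde F_d^{norm})^n\big)$ and hence $\lambda\le\max_d s(\tilde F_d^{norm})$. Both right-hand sides are finite (finitely many finite integer matrices attached to a finite cell structure on $M$) and, being $\ge\lambda>1$, strictly positive; inverting the two inequalities and using $R=1/\lambda$ gives $R\ge 1/\max_d\|z\tilde F_d\|>0$ and $R\ge 1/\max_d s(\tilde F_d^{norm})>0$.

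The one genuinely delicate point is the identification $R=1/\lambda$, since a priori applying $\exp$ could enlarge the disc of convergence; this is prevented exactly by the nonnegativity of the $a_n$, which I would exploit through the elementary coefficient inequality $c_n\ge a_n/n$ (a Pringsheim-type argument) rather than through analytic continuation, so that the proof stays self-contained. Everything else is bookkeeping of estimates that already appear in Proposition~\ref{prop:main} and Theorem~\ref{th:pa1}.
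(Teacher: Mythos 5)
Your proof follows the paper's own approach: identify the radius of convergence via the Cauchy--Hadamard formula with the growth rate $\grow(\dim\HF(\phi^n))=\lambda$ (using Lemma~\ref{lem:pa} and Theorem~\ref{th:pa1}), then feed in the chain of norm inequalities from the Reidemeister trace formula to obtain the two lower bounds on $R$. The one point where you are more careful than the paper is in justifying that applying $\exp$ cannot enlarge the disc of convergence — via the coefficient bound $c_n\ge a_n/n$ coming from nonnegativity of the $a_n$ — whereas the paper applies the Cauchy--Hadamard formula to the exponent series and tacitly identifies its radius with that of $F_g$.
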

 
{\it Proof.}
 It follows from lemma \ref{lem:pa} that   $\dim\HF(\phi^n)=\|L_{\pi}(\psi^n) \|$.
By the homotopy type invariance of the right hand side we can estimate it.  We can suppose that $\psi$ is a cell map of a finite cell complex.The norm $\| L_{H}(\psi^n) \| $ is the sum of absolute values of the indices of all the $n$-orbits classes $O^n$ . It equals $\| L_{\pi}(\psi^n) \|$, the sum of absolute values of the indices of all the fixed point classes of $\psi^n$, because any two fixed point classes of $\psi^n$ contained in the same $n$-orbit class $O^n$ must have the same index. From this we have $\dim\HF(\phi^n)=\|L_{\pi}(\psi^n)\|=\|L_H(\psi^n)\|=\|\sum_d(-1)^d[\tr(z\tilde F_d)^n]\|\leq\sum_d\|[\tr(z\tilde F_d)^n]\|\leq\sum_d\|\tr(z\tilde F_d)^n\|\leq\sum_d\|(z\tilde F_d)^n\|\leq\sum_d\|(z\tilde F_d)\|^n $. The radius of convergence $ R$ is given by Caushy-Adamar formula:
$$
 \frac{1}{R}=  \limsup_n \sqrt[n]{\frac{\dim\HF(\phi^n)}{n}}=\limsup_n \sqrt[n]{\dim\HF(\phi^n)}=\lambda.
$$
Therefore we have:
$$
R= \frac{1}{\limsup_n \sqrt[n]{\dim\HF(\phi^n)}}\geq \frac{1}{\max_d \| z\tilde F_d \|} > 0.
$$
Inequalities:
$$
\dim\HF(\phi^n)= \|L_{\pi}(\psi^n)\|\!=\!\|L_H(\psi^n)\|\!=\!\|\sum_d(-1)^d[\tr(z\tilde F_d)^n]\|\!\leq\sum_d\| [\tr(z\tilde F_d)^n]\|\!
$$
$$
\leq\sum_d{||}\tr(z\tilde F_d)^n{||}\leq \sum_d\tr((z\tilde
F_d)^n)^{norm}\leq\sum_d\tr((z\tilde F_d)^{norm})^n
$$
$$
\leq \sum_d\tr((\tilde F_d)^{norm})^n
$$
and the definition of spectral radius give estimation:
$$
R= \frac{1}{\limsup_n \sqrt[n]{\dim\HF(\phi^n)}} \geq \frac{1}{\max_d s(\tilde F_d^{norm})} > 0.
$$
  \vskip 4pt plus 2pt

\label{th:main2}
\begin{theorem}
 Let $\bar{\phi}$ be a perturbed standard form map $\phi$ in a reducible
mapping class $g$ of compact surface  of genus $\geq 2$ and  $\lambda$ is the largest  dilatation  of the  pseudo-Anosov components( $\lambda=1$ if there is no pseudo-Anosov components). Then
 the symplectic  zeta function $F_g(t)= F_{\bar{\phi}}(t)$ has positive radius of convergence  $R=\frac{1}{\lambda}$, where  $\lambda$ is the largest  dilatation  of the  pseudo-Anosov components( $\lambda=1$ if there is no pseudo-Anosov components).
\end{theorem}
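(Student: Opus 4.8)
The plan is to read off the radius of convergence of $F_g$ directly from the growth rate established in Theorem~\ref{th:main}, in exactly the same way as in the pseudo-Anosov case treated just above; the only delicate point is the comparison between $F_g(t)$ and its logarithm at the critical radius $t=1/\lambda$. The single dynamical input is Theorem~\ref{th:main}, and the single analytic input is the Cauchy--Hadamard formula (together with the elementary fact that $\exp$ of a power series with nonnegative coefficients has nonnegative coefficients).

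First I would record the asymptotics of the Taylor coefficients of $\log F_g$. By Theorem~\ref{th:main}, $\grow(\dim\HF((\bar{\phi})^n))=\lambda$, and unwinding the iterated Cotton-Clay formula used in its proof (Theorem~\ref{th:red}) one writes $\dim\HF((\bar{\phi})^n)$ as a term that grows at most linearly in $n$ (the contributions of the identity, periodic, boundary-collar and twist pieces; see \cite{c1,Sm}) plus $\sum_j\dim HF_{*}((\bar{\phi_j})^n|M_2)$, which grows like $\max_j\lambda_j^{\,n}=\lambda^{\,n}$ by Theorem~\ref{th:pa1}. When $\lambda>1$ this gives $\limsup_n\sqrt[n]{\dim\HF((\bar{\phi})^n)}=\lambda$. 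When $\lambda=1$ there are no pseudo-Anosov pieces, so $\dim\HF((\bar{\phi})^n)$ grows at most linearly, while it stays bounded below by a positive constant along the $n$ divisible by all the periods of the periodic pieces (there is always a fixed or periodic component of negative Euler characteristic, which contributes $|\chi|>0$ via the relevant relative homology or Lefschetz-number summand of Theorem~\ref{th:red}); hence again $\limsup_n\sqrt[n]{\dim\HF((\bar{\phi})^n)}=1=\lambda$.

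Next, since $\log F_g(t)=\sum_{n\geq1}\frac{\dim\HF((\bar{\phi})^n)}{n}\,t^{n}$ and $\sqrt[n]{n}\to1$, the Cauchy--Hadamard formula shows that this series has radius of convergence $\bigl(\limsup_n\sqrt[n]{\dim\HF((\bar{\phi})^n)}\bigr)^{-1}=1/\lambda$; consequently $F_g=\exp(\log F_g)$ is holomorphic on $\{|t|<1/\lambda\}$ and $R\geq1/\lambda$. For the reverse inequality I would use that the coefficients of $\log F_g$ are nonnegative, so by Pringsheim's theorem $t=1/\lambda$ is a singular point of $\log F_g$: if $R>1/\lambda$ then $F_g$ is holomorphic at $1/\lambda$, so this singularity of $\log F_g$ could only come from a zero of $F_g$ there; but $F_g$ has nonnegative Taylor coefficients with $F_g(0)=1$, whence $F_g(1/\lambda)\geq1>0$, a contradiction. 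Therefore $R=1/\lambda>0$. Running the chain-level norm estimates of Proposition~\ref{prop:main} for the canonical representative $\psi$ of $g$ yields in addition the algebraic lower bounds $R\geq1/\max_d\|z\tilde{F}_d\|>0$ and $R\geq1/\max_d s(\tilde{F}_d^{norm})>0$. The only genuinely non-formal step here is this comparison of the radii of convergence of $F_g$ and $\log F_g$ at $t=1/\lambda$; everything else is a direct appeal to Theorem~\ref{th:main} and routine bookkeeping with the Cauchy--Hadamard formula.
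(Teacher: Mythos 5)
Your proof follows the same route as the paper's: apply the Cauchy--Hadamard formula to the coefficients $\dim\HF(\bar\phi^n)/n$ of $\log F_g$ and feed in the growth rate $\lambda$ from Theorem~\ref{th:main}. However, you are noticeably more careful than the paper in two places. First, the paper silently identifies the radius of convergence of $F_g$ with that of $\log F_g$; you supply the missing step, observing that the coefficients of $\log F_g$ are nonnegative, invoking Pringsheim to place a singularity of $\log F_g$ at $t=1/\lambda$, and ruling out $R>1/\lambda$ by noting $F_g(1/\lambda)\ge 1>0$, so a singularity of $\log F_g$ cannot arise from a zero of $F_g$. Second, in the degenerate case $\lambda=1$ the paper's equality $\limsup_n\sqrt[n]{\dim\HF(\bar\phi^n)}=\grow(\dim\HF(\bar\phi^n))$ requires that the $\limsup$ is not $<1$ (recall $\grow$ is capped below at $1$); you justify this by exhibiting a summand in Cotton-Clay's formula that stays bounded away from zero along suitable $n$. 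Both refinements are genuine gaps in the paper's terse proof and your argument closes them correctly.
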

{\it Proof.}
 The radius of convergence $ R$ is given by Caushy-Adamar formula:
$$
 \frac{1}{R}=  \limsup_n \sqrt[n]{\frac{\dim\HF(\phi^n)}{n}}=\limsup_n \sqrt[n]{\dim\HF(\phi^n)}.
$$ By Theorem \ref{th:main} we have 
$$
\limsup_n \sqrt[n]{\dim\HF(\phi^n)}=\grow(\dim\HF(\phi^n))=\lambda
$$

\begin{ex}
 Let $X$ be surface with boundary, and  $f:X\rightarrow X$ be a map.Fadell and Husseini(see  \cite{j}) devised a method of computing the matrices of the lifted chain map for surface maps.Suppose $ \{a_1, .... ,a_r\} $ is a free basis for $\pi_1(X)$. Then $X$ has the homotopy type of a bouquet $B$  of $r$ circles which can be decomposed into one 0-cell and $r$  1-cells corresponding to the $a_i$,and $f$ has the homotopy type of a cellular map  $g:B\rightarrow B.$
By the homotopy type invariance of the invariants,we can replace $f$ with $g$ in computations.The homomorphism $\tilde f_*:\pi_1(X)\rightarrow \pi_1(X)$ induced by $f$ and $g$ is determined by the images $b_i=\tilde f_*(a_i), i=1,.. ,r $.The fundamental group $\pi_1(T_f)$ has a presentation $\pi_1(T_f)=<a_1,...,a_r,z| a_iz=zb_i, i=1,..,r>$.Let
$$
D=(\frac{\partial b_i}{\partial a_j})
$$
be the Jacobian in Fox calculus(see \cite{j}).Then,as pointed out in \cite{j}, the matrices of the lifted chain map $\tilde g$ are
$$
\tilde F_0=(1), \tilde F_1=D=(\frac{\partial b_i}{\partial a_j}).
$$
Now, we can find estimations for the radius $R$ as above.
 \end{ex}

 Let $ \mu(d), d \in \N$,
be the M\"obius function.

 \begin{theorem}\cite{f}
Let $\phi$ be  a non-trivial orientation
preserving standard form periodic diffeomorphism  of least period $m$ of a compact connected surface $M$
of Euler characteristic $\chi(M) < 0$  . Then the symplectic
  zeta function $ F_g(t)= F_\phi(t)$ is  a radical of a rational function and 
$$
F_g(t)= F_\phi(t) =\prod_{d\mid m}\sqrt[d]{(1-t^d)^{-P(d)}},
$$
 where the product is taken over all divisors $d$ of the period $m$, and $P(d)$ is the integer
$  P(d) = \sum_{d_1\mid d} \mu(d_1)\dim\HF(\phi^{d/ d_1}) .  $
\end{theorem}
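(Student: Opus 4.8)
The plan is to exploit that, for a genuine periodic map, the sequence $a_n:=\dim\HF(\phi^n)$ is not merely periodic but depends only on $\gcd(n,m)$, and then to extract the stated product from the resulting power series by a M\"obius inversion together with the identity $\sum_{k\ge1}t^{ek}/(ek)=-\tfrac1e\log(1-t^e)$.

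First I would record periodicity: since $\phi$ has least period $m$ as a diffeomorphism, $\phi^{m}=\id_M$, so $\phi^{n+m}=\phi^{n}$ and $a_{n+m}=a_{n}$. The sharper claim is $a_n=a_{\gcd(n,m)}$. Put $d=\gcd(n,m)$. A point $x$ lies in $\Fix(\phi^n)$ exactly when its $\phi$-period divides $n$; since that period divides $m$ as well, it divides $d$, so $\Fix(\phi^n)=\Fix(\phi^{d})$ as subsets of $M$. If $m\mid n$ then $d=m$ and $\phi^n=\phi^{d}=\id_M$, so $a_n=a_d$ trivially. If $m\nmid n$, write $\phi^n=(\phi^{d})^{n/d}$ with $\gcd(n/d,m/d)=1$; then $\phi^n$ and $\phi^{d}$ are both non-trivial standard form periodic maps (iterates of the canonical representative are again canonical). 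Theorem~\ref{thm:main} then gives $a_n=L(\phi^n)$ and $a_d=L(\phi^{d})$. I would next argue that $\Fix(\phi^n)$ is finite and that every fixed point has index $+1$: the fixed set of a non-trivial orientation-preserving periodic isometry of a hyperbolic surface has no positive-dimensional component (such a component would force an orientation-reversing normal model, or the identity), hence is discrete, hence finite by compactness; and at a fixed point $x$ the derivative $\de(\phi^n)_x$ is a non-trivial rotation $R_\theta$, so $\det(\id-R_\theta)=2-2\cos\theta>0$ and the local index is $+1$. The same applies to $\phi^{d}$. Therefore $L(\phi^n)=\#\Fix(\phi^n)=\#\Fix(\phi^{d})=L(\phi^{d})$, i.e. $a_n=a_{\gcd(n,m)}$.

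Now set $b_d:=\dim\HF(\phi^{d})$ for $d\mid m$ (so $b_m=\dim\HF(\id_M)=\dim H_*(M;\Z_2)$) and define $P(d):=\sum_{d_1\mid d}\mu(d_1)\,b_{d/d_1}=\sum_{e\mid d}\mu(d/e)\,b_e\in\Z$. M\"obius inversion gives $b_d=\sum_{e\mid d}P(e)$, hence for all $n$,
$$
a_n=b_{\gcd(n,m)}=\sum_{e\mid\gcd(n,m)}P(e)=\sum_{\substack{e\mid m\\ e\mid n}}P(e).
$$
Substituting into the defining series and interchanging the finitely many sums, for $|t|<1$,
$$
\log F_\phi(t)=\sum_{n\ge1}\frac{a_n}{n}\,t^n=\sum_{e\mid m}P(e)\sum_{k\ge1}\frac{t^{ek}}{ek}=-\sum_{e\mid m}\frac{P(e)}{e}\log\!\left(1-t^{e}\right),
$$
and exponentiating yields $F_\phi(t)=\prod_{d\mid m}(1-t^{d})^{-P(d)/d}=\prod_{d\mid m}\sqrt[d]{(1-t^{d})^{-P(d)}}$, a finite product of radicals of the rational functions $(1-t^{d})^{-P(d)}$, so $F_\phi(t)$ is a radical of a rational function.

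The main obstacle is the refinement $a_n=a_{\gcd(n,m)}$: the set-level identity $\Fix(\phi^n)=\Fix(\phi^{\gcd(n,m)})$ is elementary, but upgrading it to an equality of Floer dimensions requires Theorem~\ref{thm:main} together with the index-$+1$ and finiteness facts for periodic isometries, plus a separate (trivial) treatment of the iterates with $m\mid n$, where $\phi^n=\id_M$ and Theorem~\ref{thm:main} does not apply. Once that is in place, the passage to the zeta function is the routine M\"obius-inversion computation carried out above.
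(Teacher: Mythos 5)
The paper does not reprove this theorem here; it is cited from \cite{f}, where the argument is the standard M\"obius-inversion computation for zeta functions of periodic maps, and your proof follows that same route. Your argument is correct: the crucial refinement $a_n = a_{\gcd(n,m)}$ is obtained exactly as one should, by combining the set-level identity $\Fix(\phi^n)=\Fix(\phi^{\gcd(n,m)})$ with Theorem~\ref{thm:main} and the facts that a nontrivial orientation-preserving periodic isometry of a hyperbolic surface has a finite fixed set with all local indices $+1$ (and you correctly isolate the case $m\mid n$, where $\phi^n=\id_M$ and Theorem~\ref{thm:main} does not apply); after that, defining $P(d)$ by M\"obius inversion, interchanging the two finite sums in $\log F_\phi(t)$, and using $\sum_{k\geq 1} t^{ek}/(ek) = -\tfrac{1}{e}\log(1-t^e)$ produces the stated product of radicals.
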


We denote by $L_\phi(t)$ the  Weil  zeta function 
$$
 L_\phi(t)  :=  \exp\left(\sum_{n=1}^\infty \frac{L(\phi^n)}{n} t^n \right),
$$ where $L(\phi^n)$ is the Lefschetz  number of  of $\phi^n$.

\begin{theorem}\cite{f}
If  $\phi$ is a  hyperbolic  diffeomorphism of a 2-dimensional torus $T^2$,
then the  symplectic zeta function $ F_g(t)=F_\phi(t)$ is  a rational function
and $F_g(t)=F_\phi(t)=(L_\phi(\sigma\cdot t))^{(-1)^r}$ , where  $r$ is equal to the number of $\lambda_i \in Spec(\tilde \phi) $ such that $ \mid \lambda_i \mid > 1$, $p$ is equal to the number of $\mu_i \in Spec(\tilde \phi)$ such that $\mu_i <-1$ and  $\sigma=(-1)^p$, here $\tilde \phi$ is a lifting of $\phi$
to the universal cover.
\end{theorem}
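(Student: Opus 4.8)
{\it Proof proposal.} The plan is to reduce the assertion to an explicit computation of the Lefschetz numbers $L(\phi^n)=\det(\id-A^n)$, where $A\in SL(2,\Z)$ with $|\tr A|>2$ represents the mapping class of $\phi$, together with the fact that in the genus one case symplectic Floer homology merely counts fixed points. First I would invoke the genus one situation (cf. the Remark following Theorem \ref{th:main}, and \cite{f,p}): a hyperbolic diffeomorphism $\phi$ of $T^2$ is isotopic, through symplectomorphisms for which $\HF$ is defined and isotopy invariant, to the linear automorphism $\tilde\phi=A$; since $1$ is not an eigenvalue of $A^n$ (the eigenvalues $\mu_1,\mu_2$ of $A$ are real with $|\mu_1|>1>|\mu_2|$ and $\mu_1\mu_2=\det A=1$), all fixed points of $\phi^n=A^n$ are nondegenerate, and each has index $\sign\det(\id-A^n)$, so all of them lie in one $\Z_2$-degree of $\CF(\phi^n)$, the Floer differential vanishes, and
$$\dim\HF(\phi^n)=\#\Fix(\phi^n)=|\det(\id-A^n)|=|L(\phi^n)|.$$
Equivalently, this is the genus one analogue of Lemma \ref{lem:pa}: $\dim\HF(\phi^n)=N(\phi^n)=\|L_\pi(\phi^n)\|=|L(\phi^n)|$, the last equality holding because every essential fixed point class of $\phi^n$ is a single point of index of the same sign.

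Second I would run the sign analysis of $L(\phi^n)=\det(\id-A^n)=(1-\mu_1^n)(1-\mu_2^n)$. The factor $1-\mu_2^n$ is positive for every $n$ since $|\mu_2|<1$. The factor $1-\mu_1^n$ has sign $-\sigma^n$, where $\sigma:=\sign(\mu_1)\in\{\pm1\}$: if $\mu_1>0$ it is negative for all $n$, and if $\mu_1<0$ it is negative for even $n$ and positive for odd $n$. Because $\mu_1\mu_2=1>0$ the two eigenvalues share a sign, so $\sigma=+1$ precisely when both are positive, i.e. when no eigenvalue is $<-1$ ($p=0$), and $\sigma=-1$ precisely when $\mu_1<-1<\mu_2<0$, i.e. when exactly one eigenvalue is $<-1$ ($p=1$); hence $\sigma=(-1)^p$ as in the statement, while $r=1$ since exactly one eigenvalue has modulus $>1$. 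Therefore $L(\phi^n)=-\sigma^n|L(\phi^n)|$, equivalently $|L(\phi^n)|=-\sigma^nL(\phi^n)$, for all $n\geq1$.

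Third I would assemble the symplectic zeta function. Using $\dim\HF(\phi^n)=|L(\phi^n)|=-\sigma^nL(\phi^n)$ and $\sigma^{2n}=1$,
\begin{eqnarray*}
F_g(t)=F_\phi(t) & = & \exp\left(\sum_{n=1}^\infty\frac{|L(\phi^n)|}{n}\,t^n\right)
= \exp\left(-\sum_{n=1}^\infty\frac{L(\phi^n)}{n}\,(\sigma t)^n\right)\\
& = & \big(L_\phi(\sigma t)\big)^{-1}=\big(L_\phi(\sigma t)\big)^{(-1)^r},
\end{eqnarray*}
since $r=1$. Rationality is then immediate: $L_\phi(t)=\frac{\det(\id-tA)}{(1-t)^2}=\frac{(1-\mu_1t)(1-\mu_2t)}{(1-t)^2}$ is the standard rational form of the Lefschetz zeta function on $H_*(T^2;\Q)$, so $L_\phi(\sigma t)^{\pm1}$, and hence $F_g(t)$, is a rational function. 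This is consistent with Theorem \ref{th:pa1}: the radius of convergence of $F_g(t)$ is $1/|\mu_1|=1/\lambda$.

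The step I expect to be the main obstacle is the first one, the identity $\dim\HF(\phi^n)=|L(\phi^n)|$. It rests on the genus one computations of \cite{f,p}, and some care is needed because $\chi(T^2)=0$, so monotonicity in the sense used for genus $\geq2$ does not literally apply and one must work with the weak form; the index-sign argument above makes the vanishing of the differential transparent, but the invariance of $\HF$ along the isotopy from $\phi$ to its linear model has to be quoted from the genus one theory. The remaining two steps are elementary power series bookkeeping.
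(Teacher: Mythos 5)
The paper does not supply a proof of this theorem; it is quoted verbatim from \cite{f} with no argument reproduced here, so there is nothing in this source to compare your proof against line by line. That said, your reconstruction is correct and is essentially what the cited reference does.

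A few specific checks. The identity $\dim\HF(\phi^n)=\#\Fix(\phi^n)=|\det(\id-A^n)|$ for a hyperbolic $A\in SL(2,\Z)$ is exactly the genus-one computation the paper points to (the Remark after Theorem~\ref{th:main} and \cite{f,p}): all fixed points of $A^n$ have the same $\Z_2$-grading since $\de(A^n)_y=A^n$ at every fixed point, so the differential vanishes and $\CF=\HF$. Your sign bookkeeping is right: with $\mu_1\mu_2=1$, $|\mu_1|>1>|\mu_2|$, the factor $1-\mu_2^n$ is always positive, $\sign(1-\mu_1^n)=-\sigma^n$ where $\sigma=\sign(\mu_1)$, and $\sigma=(-1)^p$ precisely because both eigenvalues share a sign (so $p\in\{0,1\}$). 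Since exactly one eigenvalue has modulus $>1$, $r=1$ and $(-1)^r=-1$, which matches $F_\phi(t)=L_\phi(\sigma t)^{-1}$. The rationality of $L_\phi(t)=\det(\id-tA)/(1-t)^2$ then yields rationality of $F_g$. The one point you rightly flag as needing outside input — well-definedness and isotopy invariance of $\HF$ on the torus where $\chi(T^2)=0$ so the monotonicity normalization degenerates — is indeed not re-proved here and has to be imported from Pozniak's thesis, exactly as the paper's remark indicates.
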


 \subsection{ Concluding remarks and questions}

\begin{rk}
For a symplectic manifold $X$  the (conjugation-invariant) \emph{Floer-type entropy} of $g\in Symp(X)/Ham(X)$, a mapping class  of  $\phi $, is defined in \cite{Sm} as 
$$h_{F}(g) \ = \ \limsup \frac{1}{n} \log rk\, HF(\phi^n)= \log F^{\infty}(g)
$$
This is a kind of robust version of the periodic entropy, robust in the sense that it depends on a symplectic diffeomorphism only through its mapping class; by contrast topological and periodic entropy are typically very sensitive to perturbation.  
As we proved above, for area-preserving diffeomorphisms of a surface $M$, the Floer-type entropy  coincides with the topological  entropy of the canonical representative  in corresponding mapping class; moreover, $h_{F}(g)>0 $ if and only if  $ g$ has a pseudo-Anosov component.  
\end{rk}

\begin{que}(Entropy conjecture for symplectomorphisms)

Is it always true that for symplectomorphisms of compact symplectic manifolds
$$h(\phi)\geq \log F^{\infty}(g)=\log\grow(\dim\HF(\phi^n)) = h_F(g) ?$$ 
\end{que} 
\begin{que}(A weak version of the Entropy conjecture for symplectomorphisms)

Is it always true that for symplectomorphisms of compact symplectic manifolds
$$h(\phi)\geq \log\grow(\mid\chi(\HF(\phi^n))\mid)  ?$$ 
\end{que}  
Here $\chi(\HF(\phi^n))$ is the Euler characteristic of symplectic Floer homology of $\phi^n$. 
 If for every $n$ all the fixed points of $\phi^n$ are non-degenerate, i.e. for all $x\in\fix(\phi^n)$, $\det(\id-\de\phi^n(x))\ne0$, then  
$$\chi(\HF(\phi^n))=\sum_{x=\phi^n(x)} \sign(\det(\id-\de\phi^n(x)))=L(\phi^n).
$$ This implies that the  question above is a version of the question of Shub \cite{sh}.

\begin{que} 
Is it true that  for a symplectomorphism $\phi$ of an aspherical compact symplectic manifold
 $$ F^{\infty}(g):=\grow(\dim\HF(\phi^n))= L^{\infty}(\phi)=N^{\infty}(\phi) ?$$ 
\end{que}

Inspired by the Hasse-Weil  zeta function of an algebraic variety over a finite field,
  Artin and Mazur \cite{am} defined  the  zeta function for an arbitrary map $f: X \rightarrow X $
  of a topological space $X$:
 $$ 
 AM_f(t)  :=  \exp\left(\sum_{n=1}^\infty \frac{\#\Fix(f^n)}{n} t^n \right),
 $$
  where $\#\Fix(f^n)$ is the number of isolated fixed points of $f^n$.
Artin and Mazur showed that for a dense set of the space of smooth maps of a compact smooth manifold into itself the  number of periodic points  $\#\Fix(f^n)$ grows at most exponentially and  the  Artin-Mazur zeta function $ AM_f(t) $ has a positive radius of convergence \cite{am}. Later   Manning \cite{m} proved the rationality of the Artin - Mazur zeta function for diffeomorphisms of a smooth compact manifold satisfying Smale  axiom A. On the other hand there exist maps for which Artin-Mazur zeta function is transcendental .
 The symplectic zeta function  $F_\phi(t)$  can be considered as some analog of  the Artin-Mazur zeta function  $ AM_f(t) $  because periodic points of $\phi^n$ provide  the generators of symplectic Floer homologies    
 $\HF(\phi^n)$. This motivate following 

\begin{conj} For any compact symplectic manifold $M$ and  symplectomorphism $\phi: M \rightarrow M$   with  well defined Floer homology groups $\HF(\phi^n)$,  $n\in \N$ the symplectic
zeta function $F_g(t)=F_\phi(t)$ has a  positive radius of convergence.

\end{conj}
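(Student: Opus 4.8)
The plan is to reduce the statement to an exponential upper bound for $\dim\HF(\phi^n)$, and then to attack that bound either through a ``minimal model'' in the spirit of the Reidemeister trace formula, or through a submultiplicativity estimate. By the Cauchy--Hadamard formula the radius of convergence of $F_\phi(t)$ is
$$
R=\frac{1}{\limsup_n\sqrt[n]{\dim\HF(\phi^n)/n}}=\frac{1}{\limsup_n\sqrt[n]{\dim\HF(\phi^n)}},
$$
so the conjecture is equivalent to the assertion that $\grow(\dim\HF(\phi^n))<\infty$, i.e. that $\dim\HF(\phi^n)$ grows at most exponentially in $n$. Trivially $\dim\HF(\phi^n)\le\dim\CF(\phi^n)$, the number of generators of a nondegenerate chain model for $\HF(\phi^n)$ --- equivalently, the number of intersection points of $\mathrm{graph}(\phi^n)$ with the diagonal in $(M\times M,\,-\omega\oplus\omega)$ after a generic Hamiltonian perturbation --- so a first, naive sufficient condition would be at most exponential growth of the number of these perturbed periodic points, uniformly in the perturbation. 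However, the surface case already shows that this chain-level estimate can fail while the homological one survives: there are area-preserving surface diffeomorphisms whose numbers of periodic points grow super-exponentially, yet by Theorem~\ref{th:main} the total dimension of their Floer homology grows only like $\lambda^n$. Hence one genuinely has to exploit the cancellation in the Floer differential, and this is the crux.

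\emph{First approach: a uniform combinatorial model.} In the surface case the exponential bound followed from $\dim\HF(\phi^n)=\|L_{\pi}(\psi^n)\|$ together with the Reidemeister trace formula $L_{\pi}(\psi^n)=\sum_d(-1)^d[\Tr(z\tilde F_d)^n]$, which already yields $\dim\HF(\phi^n)\le\sum_d\|z\tilde F_d\|^n$ from a \emph{single} cellular map. I would look for a higher-dimensional analogue: a chain complex, functorial in $n$ and quasi-isomorphic to $\CF(\phi^n)$, built from a fixed finite amount of combinatorial data attached to $\phi$ --- a handle or CW decomposition of the mapping torus $T_\phi$ together with $\Z\pi$-matrices, a Morse--Bott or cluster model, or the ECH/PFH-type combinatorial complex where it is available. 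The key steps would be (i) to produce such a model with total rank bounded by $C^n$ for a constant $C=C(\phi)$, and (ii) to observe that passing to homology can only decrease the rank. Step (i) is precisely where the Thurston/train-track normal form entered on surfaces, and where no general substitute is presently known.

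\emph{Second approach: submultiplicativity.} If one could construct, for all $m,n\ge 1$, a natural map realizing
$$
\dim\HF(\phi^{m+n})\le\dim\HF(\phi^{m})\cdot\dim\HF(\phi^{n}),
$$
then Fekete's lemma (applied to $\log\dim\HF(\phi^n)$, the case of vanishing terms being immediate) would give that $\lim_n\sqrt[n]{\dim\HF(\phi^n)}$ exists and equals $\inf_n\sqrt[n]{\dim\HF(\phi^n)}<\infty$, and the conjecture would follow at once. Such an inequality ought to come from a composition or pair-of-pants product relating the open-string Floer homologies of $\phi^m$, $\phi^n$ and $\phi^{m+n}$ (viewing $\HF(\phi^k)$ as Lagrangian Floer homology of $\mathrm{graph}(\phi^k)$ and the diagonal, or equivalently as fixed-point Floer homology of a composition), possibly after passing to the mapping-torus or symplectic-homology picture. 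The obstacle here is that the iterates $\HF(\phi^k)$ do not obviously assemble into a ring, or into a system carrying such a multiplicative comparison map; getting the composition map both defined and rank-decreasing is the difficulty.

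\emph{Main obstacle and fallback.} The essential difficulty common to both routes is the absence, in dimension greater than two, of a canonical ``minimal'' representative --- the Nielsen/Thurston normal form --- that would convert a periodic-point count into the growth rate of the powers of one fixed matrix. Lacking it, I would first establish the conjecture in large families where one of the strategies goes through unconditionally: for Hamiltonian symplectomorphisms, where every $\phi^n$ is again Hamiltonian and $\dim\HF(\phi^n)=\dim H_*(M)$ is independent of $n$ (so $R=1$); for symplectic tori and their linear or affine symplectomorphisms, via the explicit torus computation together with the K\"unneth formula in Lagrangian Floer homology, which yields $\dim\HF((\phi_1\times\phi_2)^n)=\dim\HF(\phi_1^n)\cdot\dim\HF(\phi_2^n)$ and reduces products to factors; and for monodromies of symplectic Lefschetz fibrations and fibered symplectic mapping tori, by applying Theorem~\ref{th:main} fiberwise through a Leray--Serre-type spectral sequence. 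This isolates the genuinely new phenomenon and, I expect, is the realistic content of any first proof of the conjecture in full generality.
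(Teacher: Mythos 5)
The statement you were asked to prove is labeled a \emph{conjecture} in the paper, and the paper offers no proof of it: the author proves positivity of the radius of convergence only for surface symplectomorphisms (Theorems~\ref{th:pa1}, \ref{th:main} and the two theorems on the symplectic zeta function), where the Nielsen--Thurston normal form, the Reidemeister trace formula, and the Cotton-Clay/Gautschi/Seidel computations of $\HF(\phi^n)$ are available, and then explicitly leaves the general symplectic case open. Your write-up is therefore correctly calibrated: you do not claim a proof, you reduce the conjecture via Cauchy--Hadamard to the exponential-growth bound $\grow(\dim\HF(\phi^n))<\infty$, and you identify precisely the missing ingredient --- a canonical finite combinatorial model for the iterates in dimension $>2$ playing the role that $L_{\pi}(\psi^n)=\sum_d(-1)^d[\Tr(z\tilde F_d)^n]$ plays on surfaces. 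Your first approach is in fact the mechanism the paper uses in the surface case, and your second (submultiplicativity via a composition/pair-of-pants map plus Fekete) is a genuinely different and attractive route, though, as you note, no such rank-decreasing composition map on fixed-point Floer homology of iterates is currently known.

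Two small cautions on the fallback cases. For Hamiltonian $\phi$ the identification $\HF(\phi^n)\cong H_*(M)$ generally requires Novikov coefficients or monotonicity hypotheses, so ``$\dim\HF(\phi^n)$ constant'' should be read as conditional on $\HF$ being defined with those conventions. For the torus case, the paper already records rationality of $F_\phi(t)$ for hyperbolic toral automorphisms, so your K\"unneth reduction of products to factors is consistent with, and subsumed by, that result. Neither of these affects your main point, which is the honest one: absent a higher-dimensional Nielsen--Thurston theory or a multiplicative structure on $\bigl(\HF(\phi^n)\bigr)_{n\ge 1}$, the conjecture remains open, and the realistic program is exactly the list of special families you give.
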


\begin{que}

Is the symplectic zeta function  $F_g(t)=F_\phi(t)$ an algebraic function  of $z$?

\end{que}

\begin{rk}
Given a symplectomorphism $\phi$ of surface $M$, one can form
the symplectic mapping torus
$M^4_{\phi}=T^3_{\phi}\rtimes S^1$, where $T^3_{\phi}$ is  usual mapping torus
.
Ionel and Parker \cite{IP} have computed the degree zero Gromov invariants
\cite{IP}(these are built from the invariants of Ruan and Tian)  of
$M^4_{\phi}$ and of fiber sums of the $M^4_{\phi}$ with other symplectic manifolds. This is done by expressing the Gromov invariants in terms of the
Lefschetz zeta function $ L_\phi(z)$ \cite{IP}. The result is a large set of interesting non-Kahler
symplectic manifolds with computational ways of distinguishing them. In dimension four this gives a symplectic construction of the exotic elliptic surfaces of Fintushel and Stern \cite{FS}. This construction arises from knots.
Associated to each fibered knot $K$ in $S^3$ is a Riemann surface $M$ and a
monodromy diffeomorphism $f_K$ of $M$. Taking $\phi=f_K$ gives symplectic
4-manifolds ${M^4}_\phi(K)$ with Gromov invariant
$Gr({M^4}_\phi(K))= A_K(t)/(1-t)^2=L_\phi(t)$, where $ A_K(t)$ is the Alexander polynomial of knot $K$. Next, let $E^4(n)$ be the simply-connected minimal elliptic surface with fiber $F$ and canonicla divisor $k=(n-2)F$. Forming the fiber sum $E^4(n,K)=E^4(n)\#_{(F=T^2)}{M^4}_\phi(K)$ we obtain a
symplectic manifold homeomorphic to $E^4(n)$.
Then for $n\geq 2$ the Gromov and Seiberg-Witten invariants of $E^4(K)$
are $ Gr(E^4(n,K))=SW(E^4(n,K))=A_K(t)(1-t)^{n-2}$ \cite{FS, IP}. 
Thus fibered knots with distinct Alexander polynomials give rise to symplectic manifolds $E^4(n,K)$ which are homeomorphic but not diffeomorphic. In particular, there are infinitely many distinct symplectic 4-manifolds homeomorphic
to $E^4(n)$ \cite{FS} .

In higher dimensions it gives many examples of manifolds which are diffeomorphic but not equivalent as symplectic manifolds.
  Theorem 13 in \cite{f}  implies that the Gromov invariants of $M^4_{\phi}$ are related to symplectic Floer homology  of $\phi $ via Lefschetz  zeta
function  $ L_\phi(t)$. We hope that  the  symplectic zeta function $
 F_\phi(t)$ give rise to a new invariant of symplectic 4-manifolds.

\end{rk}

\end{document}